\documentclass[10pt,reqno]{amsart}

\usepackage{amssymb,latexsym,mathrsfs,amsmath}
\usepackage{amsthm}
\usepackage{graphicx}
\usepackage{tikz}
\usetikzlibrary{arrows}
\usepackage{caption}
\usepackage{subcaption}
\usepackage{etex}
\usepackage[all,2cell]{xy}
\SelectTips{eu}{12}

\newtheorem{lemma}{Lemma}[section]

\newtheorem{theorem}[lemma]{Theorem}

\theoremstyle{definition}
\newtheorem{remark}[lemma]{Remark}
\newtheorem{definition}[lemma]{Definition}
\newtheorem{example}[lemma]{Example}

\DeclareMathOperator{\id}{id}
\DeclareMathOperator{\Kh}{Kh}
\DeclareMathOperator{\Mor}{Mor}
\DeclareMathOperator{\Sq}{Sq}

\newcommand{\Z}{\mathbb{Z}}

\newcommand{\Q}{\mathbb{Q}}
\newcommand{\R}{\mathbb{R}}

\newcommand{\F}{\mathbb{F}}
\newcommand{\BN}{H_{\mathrm{BN}}}
\newcommand{\cC}{\mathfrak{C}}
\newcommand{\ind}{\mathrm{ind}\,}

\newcommand{\Sqo}[1] {
\Sq^1_{\mathrm{#1}}}

\newcommand{\Kho}[1] {
\Kh_{\mathrm{o}}^{#1}
}

\newcommand{\rKh}[1] {
\widetilde{\Kh}_{\mathrm{o}}\!\!\!^{#1}
}

\newcommand{\dChr}{\mathrm{Chr}_\bullet}
\newcommand{\dChrq}{\mathrm{Chr}_{\bullet/}}
\newcommand{\dChrqu}[1]{\mathrm{Chr}_{\bullet/#1}}

\newcommand{\crossing}[4] {
\draw[#4](#1,#2) -- (#1+#3,#2+#3);
\draw[#4](#1+#3, #2) -- (#1+0.6 * #3, #2+0.4 * #3);
\draw[#4](#1,#2+#3) -- (#1+0.4 * #3, #2 + 0.6 * #3);
}

\newcommand{\smoothingup}[4]{
\draw[#4] (#1,#2) -- (#1+0.2 * #3, #2+0.2 * #3) to [out = 45, in = 315] (#1+0.2 * #3, #2+0.8 * #3) -- (#1, #2 + #3);
\draw[#4] (#1+#3,#2) -- (#1 + 0.8 * #3, #2 + 0.2 * #3) to [out = 135, in = 225] (#1 + 0.8 * #3, #2 + 0.8 * #3) -- (#1+#3,#2+#3);
}

\newcommand{\smoothinglr}[4]{
\draw[#4](#1,#2) -- (#1+0.2 * #3, #2+0.2 * #3) to [out = 45, in = 135] (#1 + 0.8 * #3, #2 + 0.2 * #3) -- (#1+#3,#2);
\draw[#4](#1,#2+#3) --  (#1+0.2 * #3, #2+0.8 * #3) to [out = 315, in = 225]  (#1+0.8 * #3, #2+0.8 * #3) -- (#1+#3,#2+#3);
}

\newcommand{\widesmoothingup}[4]{
\draw[#4] (#1,#2) -- (#1+0.2 * #3, #2+0.2 * #3) to [out = 45, in = 315] (#1+0.2 * #3, #2+0.8 * #3) -- (#1, #2 + #3);
\draw[#4] (#1+2*#3,#2) -- (#1 + 1.8 * #3, #2 + 0.2 * #3) to [out = 135, in = 225] (#1 + 1.8 * #3, #2 + 0.8 * #3) -- (#1+2*#3,#2+#3);
}

\newcommand{\arroweast}[3] {
\draw[->] (#1,#2) -- (#1+#3, #2);
}

\newcommand{\dotting}[3] {
\draw[very thick] (#1 - 0.7*#3, #2 - 0.7*#3) -- (#1 - 0.4*#3, #2 - 0.4*#3) to [out = 45, in = 135] ((#1 + 0.4*#3, #2 - 0.4*#3) -- (#1 + 0.7*#3, #2 - 0.7*#3);
\node[scale = 2*#3] at (#1, #2 - 0.25 * #3) {$\bullet$};
}

\newcommand{\saddlenorth}[3] {
\draw[very thick] (#1 - 0.7*#3, #2 - 0.7*#3) -- (#1 - 0.4*#3, #2 - 0.4*#3) to [out = 45, in = 135] ((#1 + 0.4*#3, #2 - 0.4*#3) -- (#1 + 0.7*#3, #2 - 0.7*#3);
\draw[very thick] (#1 - 0.7*#3, #2 + 0.7*#3) -- (#1 - 0.4*#3, #2 + 0.4*#3) to [out = 315, in = 225] ((#1 + 0.4*#3, #2 + 0.4*#3) -- (#1 + 0.7*#3, #2 + 0.7*#3);
\draw[->] (#1, #2 - 0.26 * #3) -- (#1, #2 + 0.26 * #3);
}

\newcommand{\bowl}[4]{
\shade[ball color = gray!40, opacity = 0.3] (#1+0.8*#3,#2) arc (0:180:0.4*#3 and -0.2*#3) -- (#1,#2-#4*#3) arc (180:360:0.4*#3 and 0.4*#3) -- (#1+0.8*#3,#2);
\draw (#1+0.8*#3,#2) arc (0:180:0.4*#3 and -0.2*#3) -- (#1,#2-#4*#3) arc (180:360:0.4*#3 and 0.4*#3) -- (#1+0.8*#3,#2);
\draw (#1,#2) arc (180:360:0.4*#3 and -0.2*#3);
\shade[ball color = gray!40, opacity = 0.15] (#1,#2) arc (180:360:0.4*#3 and -0.2*#3) arc (0:180:0.4*#3 and -0.2*#3);
}

\newcommand{\bowld}[3]{
\shade[ball color = gray!40, opacity = 0.3] (#1+0.8*#3,#2) arc (0:180:0.4*#3 and -0.2*#3) -- (#1,#2-0.1*#3) arc (180:360:0.4*#3 and 0.4*#3) -- (#1+0.8*#3,#2);
\draw (#1+0.8*#3,#2) arc (0:180:0.4*#3 and -0.2*#3) -- (#1,#2-0.1*#3) arc (180:360:0.4*#3 and 0.4*#3) -- (#1+0.8*#3,#2);
}

\newcommand{\bowlud}[4]{
\shade[ball color = gray!40, opacity = 0.3] (#1,#2) arc (180:360:0.4*#3 and 0.2*#3) -- (#1+0.8*#3,#2+#4*#3) arc (0:180:0.4*#3 and 0.4*#3) -- (#1,#2);
\draw (#1,#2) arc (180:360:0.4*#3 and 0.2*#3) -- (#1+0.8*#3,#2+#4*#3) arc (0:180:0.4*#3 and 0.4*#3) -- (#1,#2);
\draw[dashed] (#1+0.8*#3,#2) arc (0:180:0.4*#3 and 0.2*#3);
}

\newcommand{\cylinder}[4]{
\shade[ball color = gray!40, opacity = 0.3] (#1,#2) arc (180:360:0.4*#3 and 0.2*#3) -- (#1+0.8*#3,#2+#4*#3) 
arc (0:180:0.4*#3 and -0.2*#3);
\shade[ball color = gray!40, opacity = 0.15] (#1,#2+#4*#3) arc (180:360:0.4*#3 and -0.2*#3) arc (0:180:0.4*#3 and -0.2*#3);
\draw (#1,#2) arc (180:360:0.4*#3 and 0.2*#3) -- (#1+0.8*#3,#2+#4*#3) 
arc (0:180:0.4*#3 and 0.2*#3) -- (#1,#2);
\draw[dashed] (#1+0.8*#3,#2) arc (0:180:0.4*#3 and 0.2*#3);
\draw (#1,#2+#4*#3) arc (180:360:0.4*#3 and 0.2*#3);
}

\newcommand{\cylindert}[4]{
\shade[ball color = gray!40, opacity = 0.3] (#1,#2) arc (180:360:0.4*#3 and 0.2*#3) -- (#1+0.8*#3,#2+#4*#3) 
arc (0:180:0.4*#3 and -0.2*#3);
\draw (#1, #2+#4 * #3) -- (#1,#2) arc (180:360:0.4*#3 and 0.2*#3) -- (#1+0.8*#3,#2+#4*#3) ;
\draw[dashed] (#1+0.8*#3,#2) arc (0:180:0.4*#3 and 0.2*#3);
}

\newcommand{\pop}[3]{
\shade[ball color = gray!40, opacity = 0.3] (#1,#2) arc (180:360:0.4*#3 and 0.2*#3) to [out = 90, in = 270] (#1+1.6*#3,#2+1*#3) arc (0:180:0.4*#3 and -0.2*#3) arc (0:180:0.4*#3 and -0.4*#3) arc (0:180:0.4*#3 and -0.2*#3) to [out = 270, in = 90] (#1,#2);
\shade[ball color = gray!40, opacity = 0.15] (#1-0.8*#3,#2+#3) arc (180:360:0.4*#3 and -0.2*#3) arc (0:180:0.4*#3 and -0.2*#3);
\shade[ball color = gray!40, opacity = 0.15] (#1+0.8*#3,#2+#3) arc (180:360:0.4*#3 and -0.2*#3) arc (0:180:0.4*#3 and -0.2*#3);
\draw (#1,#2) arc (180:360:0.4*#3 and 0.2*#3) to [out = 90, in = 270] (#1+1.6*#3,#2+1*#3) arc (0:180:0.4*#3 and -0.2*#3) arc (0:180:0.4*#3 and -0.4*#3) arc (0:180:0.4*#3 and -0.2*#3) to [out = 270, in = 90] (#1,#2);
\draw[dashed] (#1,#2) arc (180:360:0.4*#3 and -0.2*#3);
\draw (#1,#2+#3) arc (0:180:0.4*#3 and 0.2*#3);
\draw (#1+1.6*#3,#2+#3) arc (180:360:-0.4*#3 and -0.2*#3);
}

\newcommand{\popd}[3]{
\shade[ball color = gray!40, opacity = 0.3] (#1,#2) arc (180:360:0.4*#3 and 0.2*#3) to [out = 90, in = 270] (#1+1.6*#3,#2+1*#3) arc (0:180:0.4*#3 and -0.2*#3) arc (0:180:0.4*#3 and -0.4*#3) arc (0:180:0.4*#3 and -0.2*#3) to [out = 270, in = 90] (#1,#2);
\draw (#1,#2) arc (180:360:0.4*#3 and 0.2*#3) to [out = 90, in = 270] (#1+1.6*#3,#2+1*#3) arc (0:180:0.4*#3 and -0.2*#3) arc (0:180:0.4*#3 and -0.4*#3) arc (0:180:0.4*#3 and -0.2*#3) to [out = 270, in = 90] (#1,#2);
\draw[dashed] (#1,#2) arc (180:360:0.4*#3 and -0.2*#3);
}

\newcommand{\popud}[3]{
\shade[ball color = gray!40, opacity = 0.3] (#1,#2) arc (180:360:0.4*#3 and 0.2*#3) to [out = 270, in = 90] (#1+1.6*#3,#2-1*#3) arc (0:180:0.4*#3 and -0.2*#3) arc (0:180:0.4*#3 and 0.4*#3) arc (0:180:0.4*#3 and -0.2*#3) to [out = 90, in = 270] (#1,#2);
\shade[ball color = gray!40, opacity = 0.15] (#1,#2) arc (180:360:0.4*#3 and -0.2*#3) arc (0:180:0.4*#3 and -0.2*#3);
\draw (#1,#2) arc (180:360:0.4*#3 and 0.2*#3) to [out = 270, in = 90] (#1+1.6*#3,#2-1*#3) arc (0:180:0.4*#3 and -0.2*#3) arc (0:180:0.4*#3 and 0.4*#3) arc (0:180:0.4*#3 and -0.2*#3) to [out = 90, in = 270] (#1,#2);
\draw (#1,#2) arc (180:360:0.4*#3 and -0.2*#3);
\draw[dashed] (#1,#2-#3) arc (0:180:0.4*#3 and 0.2*#3);
\draw[dashed] (#1+1.6*#3,#2-#3) arc (180:360:-0.4*#3 and -0.2*#3);
}

\newcommand{\popudd}[3]{
\shade[ball color = gray!40, opacity = 0.3] (#1,#2) arc (180:360:0.4*#3 and 0.2*#3) to [out = 270, in = 90] (#1+1.6*#3,#2-1*#3) arc (0:180:0.4*#3 and -0.2*#3) arc (0:180:0.4*#3 and 0.4*#3) arc (0:180:0.4*#3 and -0.2*#3) to [out = 90, in = 270] (#1,#2);
\draw (#1,#2) arc (180:360:0.4*#3 and 0.2*#3) to [out = 270, in = 90] (#1+1.6*#3,#2-1*#3) arc (0:180:0.4*#3 and -0.2*#3) arc (0:180:0.4*#3 and 0.4*#3) arc (0:180:0.4*#3 and -0.2*#3) to [out = 90, in = 270] (#1,#2);
\draw[dashed] (#1,#2-#3) arc (0:180:0.4*#3 and 0.2*#3);
\draw[dashed] (#1+1.6*#3,#2-#3) arc (180:360:-0.4*#3 and -0.2*#3);
}

\newcommand{\cancelot}[3]{
\draw (#1,#2) arc (180:360:0.4*#3 and 0.2*#3) to [out = 90, in = 270] (#1+1.6*#3,#2+1*#3) arc (0:180:0.4*#3 and -0.2*#3) arc (0:180:0.4*#3 and -0.4*#3) arc (0:180:0.4*#3 and -0.2*#3) to [out = 270, in = 90] (#1,#2);
\draw[dashed] (#1,#2) arc (180:360:0.4*#3 and -0.2*#3);
\draw[dashed] (#1,#2+#3) arc (0:180:0.4*#3 and 0.2*#3);
\draw[dashed] (#1+1.6*#3,#2+#3) arc (180:360:-0.4*#3 and -0.2*#3);
\draw (#1+0.8*#3,#2+#3) -- (#1+0.8*#3,#2+1.1*#3) arc (180:360:0.4*#3 and -0.4*#3) -- (#1+1.6*#3,#2+#3);
\draw (#1, #2+#3) to [out = 90, in = 270] (#1+0.8*#3, #2+2*#3) arc (0:180:0.4*#3 and -0.2*#3) to [out = 270, in = 90] (#1-0.8*#3,#2+#3);
\draw (#1,#2+2*#3) arc (180:360:0.4*#3 and -0.2*#3);
\shade[ball color = gray!40, opacity = 0.3] (#1+0.8*#3,#2+#3) arc (180:360:0.4*#3 and 0.2*#3) -- (#1+1.6*#3,#2+1.1*#3) arc (0:180:0.4*#3 and 0.4*#3) -- (#1+0.8*#3,#2+#3);
\shade[ball color = gray!40, opacity = 0.15] (#1,#2+2*#3) arc (180:360:0.4*#3 and -0.2*#3) arc (0:180:0.4*#3 and -0.2*#3);
\shade[ball color = gray!40, opacity = 0.3] (#1,#2) arc (180:360:0.4*#3 and 0.2*#3) to [out = 90, in = 270] (#1+1.6*#3,#2+1*#3) arc (0:180:0.4*#3 and -0.2*#3) arc (0:180:0.4*#3 and -0.4*#3) to [out = 90, in = 270] (#1+0.8*#3, #2+2*#3) arc (0:180:0.4*#3 and -0.2*#3) to [out = 270, in = 90] (#1-0.8*#3,#2+#3) to [out = 270, in = 90] (#1,#2);
}

\newcommand{\cancelzo}[3]{
\shade[ball color = gray!40, opacity = 0.15] (#1,#2) arc (180:360:0.4*#3 and -0.2*#3) arc (0:180:0.4*#3 and -0.2*#3);
\draw (#1,#2) arc (180:360:0.4*#3 and 0.2*#3) to [out = 270, in = 90] (#1+1.6*#3,#2-1*#3) arc (0:180:0.4*#3 and -0.2*#3) arc (0:180:0.4*#3 and 0.4*#3) arc (0:180:0.4*#3 and -0.2*#3) to [out = 90, in = 270] (#1,#2);
\draw (#1,#2) arc (180:360:0.4*#3 and -0.2*#3);
\draw[dashed] (#1,#2-#3) arc (0:180:0.4*#3 and 0.2*#3);
\draw[dashed] (#1+1.6*#3,#2-#3) arc (180:360:-0.4*#3 and -0.2*#3);
\draw (#1, #2-#3) -- (#1, #2-1.1*#3) arc (180:360:-0.4*#3 and 0.4*#3) -- (#1-0.8*#3, #2-#3);
\draw (#1+0.8*#3, #2-#3) to [out = 270, in = 90] (#1,#2-2*#3) arc (0:180:-0.4*#3 and -0.2*#3) to [out = 90, in = 270] (#1+1.6*#3, #2-#3);
\draw[dashed] (#1,#2-2*#3) arc (180:360:0.4*#3 and -0.2*#3);
\shade[ball color = gray!40, opacity = 0.3] (#1,#2-#3) arc (0:180:0.4*#3 and -0.2*#3) -- (#1-0.8*#3,#2-1.1*#3) arc (180:360:0.4*#3 and 0.4*#3) -- (#1,#2-#3);
\shade[ball color = gray!40, opacity = 0.3] (#1,#2) arc (180:360:0.4*#3 and 0.2*#3) to [out = 270, in = 90] (#1+1.6*#3,#2-1*#3) to [out = 270, in = 90] (#1+0.8*#3,#2-2*#3) arc (0:180:0.4*#3 and -0.2*#3) to [out = 90, in = 270] (#1+0.8*#3, #2-#3) arc (0:180:0.4*#3 and 0.4*#3) arc (0:180:0.4*#3 and -0.2*#3) to [out = 90, in = 270] (#1,#2);
}

\newcommand{\plane}[2]{
\shade[color = gray!40, opacity = 0.3] (#1,#2-0.5) -- (#1+1,#2) -- (#1+1,#2+1.5) -- (#1,#2+1) -- (#1,#2-0.5);
\draw (#1,#2-0.5) -- (#1+1,#2) -- (#1+1,#2+1.5) -- (#1,#2+1) -- (#1,#2-0.5);
}

\newcommand{\sphere}[3]{
\shade[ball color = gray!40, opacity = 0.3] ({#1},{#2}) circle ({#3});
\draw (#1,#2) circle ({#3});
\draw (#1-#3,#2) arc (180:360:#3 and 0.3*#3);
\draw[dashed] (#1+#3,#2) arc (0:180:#3 and 0.3*#3);
}

\newcommand{\spheredot}[3]{
\sphere{#1}{#2}{#3}
\node at (#1-0.3*#3,#2+0.45*#3) [scale = 1.5*#3] {$\bullet$};
}

\newcommand{\birth}[4]{
\draw[#4] (#1,#2) circle (0.55*#3);
\draw (#1- 0.4* #3,#2-0.4*#3) -- (#1-0.7*#3, #2-0.7*#3);
\draw (#1+ 0.4* #3,#2-0.4*#3) -- (#1+0.7*#3, #2-0.7*#3);
\draw (#1- 0.4* #3,#2+0.4*#3) -- (#1-0.7*#3, #2+0.7*#3);
\draw (#1+ 0.4* #3,#2+0.4*#3) -- (#1+0.7*#3, #2+0.7*#3);
}

\newcommand{\death}[4]{
\draw[#4] (#1,#2) circle (#3);
\draw (#1- 0.4* #3,#2-0.4*#3) -- (#1-0.7*#3, #2-0.7*#3);
\draw (#1+ 0.4* #3,#2-0.4*#3) -- (#1+0.7*#3, #2-0.7*#3);
\draw (#1- 0.4* #3,#2+0.4*#3) -- (#1-0.7*#3, #2+0.7*#3);
\draw (#1+ 0.4* #3,#2+0.4*#3) -- (#1+0.7*#3, #2+0.7*#3);
}

\newcommand{\relsplit}[4]{
\draw[#4] (#1,#2) -- (#1+0.1*#3,#2+0.1*#3) to [out = 45, in = 135] (#1+0.5*#3,#2+0.1*#3) to [out = 315, in = 225] (#1+0.8*#3,#2+0.1*#3) to [out = 45, in = 315] (#1+0.8*#3,#2+0.5*#3) to [out = 135, in = 45] (#1+0.5*#3, #2+0.5*#3) to [out = 225, in = 315] (#1+0.1*#3,#2+0.5*#3) -- (#1,#2+0.6*#3);
}

\newcommand{\relmerge}[4]{
\draw[#4] (#1,#2) to [out = 45, in = 315] (#1,#2+0.6*#3);
\draw[#4] (#1+0.6*#3,#2+0.3*#3) circle (0.25*#3);
}

\newcommand{\torusv}[5]{
\draw[#5] (#1,#2) -- (#1,#2+0.1*#4) to [out = 90, in = 270] (#1+0.1*#3, #2+0.4*#4) -- (#1+0.1*#3, #2+0.6*#4) to [out = 90, in = 270] ((#1,#2+0.9*#4) -- (#1,#2+#4);
\draw[#5] (#1+#3,#2) -- (#1+#3,#2+0.1*#4) to [out = 90, in = 270] (#1+0.9*#3, #2+0.4*#4) -- (#1+0.9*#3, #2+0.6*#4) to [out = 90, in = 270] (#1+#3,#2+0.9*#4) -- (#1+#3,#2+#4);
}

\newcommand{\torush}[5]{
\draw[#5] (#1,#2) -- (#1,#2+0.1*#4) to [out = 90, in = 180] (#1+0.4*#3,#2+0.3*#4) -- (#1+0.6*#3,#2+0.3*#4) to [out = 0, in = 90] (#1+#3, #2+0.1*#4) -- (#1+#3, #2);
\draw[#5] (#1,#2+#4) -- (#1,#2+0.9*#4) to [out = 270, in = 180] (#1+0.4*#3,#2+0.7*#4) -- (#1+0.6*#3,#2+0.7*#4) to [out = 0, in = 270] (#1+#3,#2+0.9*#4) -- (#1+#3,#2+#4);
}

\newcommand{\smalpha}[5] {
\draw[#5] (#1, #2) -- (#1, #2+ 0.2 * #4) to [out = 90, in = 270] (#1+#3, #2+0.8 * #4) -- (#1+#3, #2+#4);
\draw[#5] (#1, #2+#4) -- (#1, #2+ 0.8 * #4) to [out = 270, in = 270] (#1+0.5*#3, #2 + 0.8 *#4) -- (#1+0.5*#3, #2+#4);
\draw[#5] (#1+0.5 * #3, #2) -- (#1+0.5 * #3, #2 + 0.2 *#4) to [out = 90, in = 90] (#1+#3, #2 + 0.2 *#4) -- (#1+#3, #2);
}

\newcommand{\smbeta}[5] {
\draw[#5] (#1, #2+#4) -- (#1, #2+0.8*#4) to [out = 270, in = 90] (#1+#3, #2+0.2 * #4) -- (#1+#3, #2);
\draw[#5] (#1,#2) -- (#1, #2 + 0.2 *#4) to [out = 90, in = 90] (#1+0.5*#3, #2 + 0.2 *#4) -- (#1+0.5*#3, #2);
\draw[#5] (#1+0.5 * #3, #2+#4) -- (#1+0.5 * #3, #2+ 0.8 * #4) to [out = 270, in = 270] (#1+#3, #2 + 0.8 *#4) -- (#1+#3, #2+#4);
}

\newcommand{\smgamma}[5]{
\draw[#5] (#1,#2) -- (#1, #2 + 0.2 *#4) to [out = 90, in = 90] (#1+0.5*#3, #2 + 0.2 *#4) -- (#1+0.5*#3, #2);
\draw[#5] (#1, #2+#4) -- (#1, #2+ 0.8 * #4) to [out = 270, in = 270] (#1+0.5*#3, #2 + 0.8 *#4) -- (#1+0.5*#3, #2+#4);
\draw[#5] (#1+#3, #2) -- (#1+#3, #2+#4);
}

\newcommand{\smdelta}[5]{
\draw[#5] (#1,#2) -- (#1, #2+#4);
\draw[#5] (#1+0.5 * #3, #2) -- (#1+0.5 * #3, #2 + 0.2 *#4) to [out = 90, in = 90] (#1+#3, #2 + 0.2 *#4) -- (#1+#3, #2);
\draw[#5] (#1+0.5 * #3, #2+#4) -- (#1+0.5 * #3, #2+ 0.8 * #4) to [out = 270, in = 270] (#1+#3, #2 + 0.8 *#4) -- (#1+#3, #2+#4);
}

\newcommand{\smomega}[5]{
\draw[#5] (#1,#2) -- (#1, #2+#4);
\draw[#5] (#1+0.5 * #3, #2) -- (#1+0.5 * #3, #2+#4);
\draw[#5] (#1+#3, #2) -- (#1+#3, #2+#4);
}

\newcommand{\closurex}[5]{
\draw[#5] (#1,#2) -- (#1, #2-0.1*#4) to [out = 270, in = 270] (#1-0.2*#3, #2-0.1*#4) -- (#1-0.2*#3, #2+1.1*#4) to [out = 90, in = 90] (#1, #2+1.1 * #4) -- (#1, #2+#4);
\draw[#5] (#1+#3,#2) -- (#1+#3, #2-0.1*#4) to [out = 270, in = 270] (#1+1.2*#3, #2-0.1*#4) -- (#1+1.2*#3, #2+1.1*#4) to [out = 90, in = 90] (#1+#3, #2+1.1 * #4) -- (#1+#3, #2+#4);
\draw[#5] (#1+0.5 * #3, #2) -- (#1+0.5 * #3, #2-0.1*#4) to [out = 270, in = 270] (#1+1.4*#3, #2-0.1*#4) -- (#1+1.4*#3, #2+1.1*#4) to [out = 90, in = 90] (#1+0.5 * #3, #2+1.1 * #4) -- (#1+0.5 * #3, #2+#4);
}

\newcommand{\dottedcup}[5]{
\draw[#5] (#1, #2+#4) -- (#1, #2+ 0.8 * #4) to [out = 270, in = 270] (#1+0.5*#3, #2 + 0.8 *#4) -- (#1+0.5*#3, #2+#4);
\node[scale = 1.4 * #3] at (#1+0.27 * #3, #2 + 0.675 *#4) {$\bullet$};
}

\newcommand{\dottedcap}[5] {
\draw[#5] (#1,#2) -- (#1, #2 + 0.2 *#4) to [out = 90, in = 90] (#1+0.5*#3, #2 + 0.2 *#4) -- (#1+0.5*#3, #2);
\node[scale = 1.4 * #3] at (#1+0.27 * #3, #2+ 0.325 * #4) {$\bullet$};
}

\newcommand{\dottedbslash}[5] {
\draw[#5] (#1, #2+#4) -- (#1, #2+0.8*#4) to [out = 270, in = 90] (#1+#3, #2+0.2 * #4) -- (#1+#3, #2);
\node[scale = 1.4 * #4] at (#1 + 0.5 * #3, #2+0.5 * #4) {$\bullet$};
}

\newcommand{\dottedslash}[5] {
\draw[#5] (#1, #2) -- (#1, #2+ 0.2 * #4) to [out = 90, in = 270] (#1+#3, #2+0.8 * #4) -- (#1+#3, #2+#4);
\node[scale = 1.4 * #4] at (#1 + 0.5 * #3, #2+0.5 * #4) {$\bullet$};
}

\newcommand{\dottedline}[4] {
\draw[#4] (#1, #2) -- (#1, #2+#3);
\node[scale = 1.4 * #3] at (#1+0.02 * #3,  #2+0.5 * #3) {$\bullet$};
}

\begin{document}
\parindent0em
\setlength\parskip{.1cm}
\thispagestyle{empty}
\title{A scanning Algorithm for odd Khovanov homology}
\author[Dirk Sch\"utz]{Dirk Sch\"utz}
\address{Department of Mathematical Sciences\\ Durham University\\ United Kingdom}
\email{dirk.schuetz@durham.ac.uk}

\begin {abstract}
We adapt Bar-Natan's scanning algorithm for fast computations in (even) Khovanov homology to odd Khovanov homology. We use a mapping cone construction instead of a tensor product, which allows us to deal efficiently with the more complicated sign assignments in the odd theory. The algorithm has been implemented in a computer program. We also use the algorithm to determine the odd Khovanov homology of $3$-strand torus links. 
\end {abstract}

\maketitle

\section{Introduction}
In the highly influental paper \cite{MR1740682}, Khovanov introduced his knot homology as a categorification of the Jones polynomial. A few years later, Ozsv\'{a}th, Rasmussen, and Szab\'{o} \cite{MR3071132} gave a different categorification, called odd Khovanov homology, using exterior algebras in place of symmetric algebras.  This homology agrees with Khovanov homology when coefficients in $\Z/2\Z$ are considered, but not over $\Q$. We refer to the original Khovanov homology here as even Khovanov homology.

While the construction in \cite{MR3071132} is similar to Khovanov's original construction, there are differences which make the odd theory somewhat more difficult to work with. Particularly noteworthy here are the sign assignments, which govern the chain complex condition $\partial^2=0$ in both even and odd theories. Unlike in the even case, for odd Khovanov homology the sign assignments depend on the link diagram as well as another orientation choice not needed in even Khovanov homology.

A very useful generalization of even Khovanov homology has been the extension to tangles, with different constructions given by Khovanov \cite{MR1928174} and Bar-Natan \cite{MR2174270}. An important feature in these papers is the behaviour under gluings of tangles, which can essentially be described by tensor products. Trying to extend odd Khovanov homology to tangles similarly leads to difficulties with the sign assignments. Still, Naisse and Putyra \cite{naisse2020odd} recently gave a construction for tangles with good gluing behaviour, based on earlier work of Putyra \cite{MR3363817}. For another construction, see also Vaz \cite{vaz2019khovanov}.

Bar-Natan used his tangle invariant to produce a fast algorithm for calculating even Khovanov homology \cite{MR2320156}. Our goal in this paper is to extend this algorithm to odd Khovanov homology. Bar-Natan's algorithm crucially makes use of the gluing behaviour via tensor product, so we need a suitable substitute for this. 

Rather than trying to use a tangle invariant combined with a tensor product, we iterate a mapping cone construction whilst scanning through the crossings of a given link diagram. We note that the tensor product in Bar-Natan's scanning algorithm, which involves one chain complex to be concentrated in two adjacent homological degrees, can be interpreted as a mapping cone. But by taking this slightly different point of view on Bar-Natan's scanning algorithm, we get the flexibility needed to deal with the more difficult sign assignments in odd Khovanov homology.

Our approach is less sophisticated than the tangle constructions mentioned above.  In fact, the intermediate chain complexes we obtain do depend on the original link diagram. Nevertheless, the simplicity of our approach means that odd Khovanov homology can now be calculated very similarly to even Khovanov homology. An implementation of the algorithm is available through the author's webpage. 
As a sample computation, we list the odd Khovanov homology of the $(8,9)$-torus knot in Figures \ref{fig:torus892} and \ref{fig:torus893}.

Rational computations can already be found in \cite{MR3071132}, and integral computations were done by Shumakovitch \cite{MR2777025} who observed that torsion of order different from 2 is far more common in odd than in even Khovanov homology. Already the $(3,4)$-torus knot contains torsion of order $3$ in its odd Khovanov homology. We use our algorithm to show that this holds for every $(3,n)$-torus link with $n\geq 4$.

Finally, we use our algorithm to calculate a concordance invariant based on the first Steenrod square arising from the work of \cite{MR4078823}.

\section{A recap of odd Khovanov homology}\label{sec:recap}
Let us recall how odd Khovanov homology is defined. Our presentation follows \cite{MR3071132} closely and we refer the reader there for more information. Let $\mathcal{D}$ be an oriented link diagram, where every crossing has an arrow going through it as in the left of Figure \ref{fig:crossing_arrow}. We refer to the left smoothing in Figure \ref{fig:crossing_arrow} as the $0$-smoothing, and the right one as the $1$-smoothing. So if the diagram $\mathcal{D}$  has $n$ crossings, we get $2^n$ smoothings, and by choosing an order of these crossings, we can parametrize these smoothings by elements $c=(c_1,\ldots,c_n)\in \{0,1\}^n$ and write $S_c$ for the smoothing, a disjoint union of circles.

\begin{figure}[ht]
\begin{tikzpicture}
\crossing{0}{0}{1}{very thick}
\arroweast{0.3}{0.5}{0.4}
\node at (1.5,0.5) {:};
\smoothingup{2}{0}{1}{very thick}
\arroweast{3.3}{0.4}{1.4}
\smoothingup{3.7}{0.6}{0.6}{thick}
\arroweast{3.9}{0.9}{0.2}
\smoothinglr{5}{0}{1}{very thick}
\end{tikzpicture}
\caption{\label{fig:crossing_arrow}A crossing with associated smoothings.}
\end{figure} 

We refer to the $c\in \{0,1\}^n$ as the vertices in the {\em hypercube of oriented resolutions}. We think of the (large) arrow between the two smoothings in Figure \ref{fig:crossing_arrow} as a surgery, and the small arrow as the surgery arc. The orientation of this arc will become important shortly. Note that the vertices $c,c'$ of the two smoothings involved in a surgery only differ in one coordinate $i$, and we can parametrize the surgeries by elements $e^i_c=(c_1,\ldots,c_{i-1},\ast,c_i,\ldots,c_{n-1})$ with $c=(c_1,\ldots,c_{n-1})\in \{0,1\}^{n-1}$.
Here $\ast$ is simply a symbol different from $0$ and $1$, and we think of such an $e_c$ as an oriented edge in the hypercube. The orientation points from the vertex $c^0$, obtained from $e^i_c$ by changing $\ast$ to $0$, to the vertex $c^1$ obtained by changing $\ast$ to $1$.

Given $c$ a vertex in the hypercube, we write 
\[
|c| = \sum_{i=1}^n c_i \in \Z,
\]
and let $\Lambda^\ast(S_c)$ be the exterior algebra over the free abelian group generated by the components of $S_c$. If $e_c$ is an edge in the hypercube between two vertices $c^0$ and $c^1$ such that $|c^1| = |c^0|+1$, then $S_{c^1}$ is obtained from $S_{c^0}$ by a surgery which either merges two components, or splits one component into two.

We then get a homomorphism
\[
F_{e_c}\colon \Lambda^\ast(S_{c^0}) \to \Lambda^\ast(S_{c^1})
\]
which in the case of a merger can be identified with the quotient map induced by identifying the two merging components. In the case of a split, the map is given by left-multiplication with $(s_1-s_0)$, where $s_1$ and $s_0$ are the two components arising in the split. To distinguish $s_1$ from $s_0$ we use the small arrow in Figure \ref{fig:crossing_arrow} as follows. If we rotate this arrow by 90 degrees anticlockwise, the arrow points from $s_0$ to $s_1$. Specifically in Figure \ref{fig:crossing_arrow} on the right smoothing the lower component is $s_0$, while the upper component is $s_1$.

We can now construct cochain groups $C^i(\mathcal{D})$ by
\[
C^i(\mathcal{D}) = \bigoplus_{c, |c| = i} \Lambda^\ast(S_c)
\]
and we want to use the $F_{e_c}$ between the various direct summands to get the coboundary. In order to get $\delta^2=0$ we need to take a closer look at faces in the hypercube.

Assume that $c\in \{0,1\}^{n-2}$ and $1\leq i < j \leq n$. We let
\[
f^{ij}_c = (c_1,\ldots,c_{i-1},\ast,c_i,\ldots, c_{j-2},\ast,c_{j-1},\ldots, c_{n-2}).
\]
We call this a face in the hypercube between the four edges $e^{0\ast}, e^{1\ast}, e^{\ast 0}, e^{\ast 1}$ obtained by replacing one of the $\ast$ with either $0$ or $1$, and with vertices $c^{00},c^{01}, c^{10}, c^{11}$. Given such a face, we get from $S_{c^{00}}$ to $S_{c^{11}}$ using two surgeries, and depending on the order in which we perform these surgeries, we get two homomorphisms
\[
F_{e^{1\ast}}\circ F_{e^{\ast0}}, F_{e^{\ast1}}\circ F_{e^{0\ast}}\colon \Lambda^\ast(S_{c^{00}})\to \Lambda^\ast(S_{c^{11}}).
\]
Depending on the local surgery picture, these two homomorphisms are equal, differ by a factor $-1$, or are both $0$. This leads to four types for these surgery pictures, and they are listed in Figure \ref{fig:subcases}.

\begin{figure}[ht]
\begin{tikzpicture}
\draw[ultra thick] (0,0.4) rectangle (10,7);
\draw[thick] (0,6.4) -- (10,6.4);
\draw[ultra thick] (4,0.4) -- (4,7);
\node at (2,6.7) {A};
\node at (1, 2.5) {X};
\node at (3, 2.5) {Y};
\node at (7, 6.7) {C};
\draw[very thick] (1.3,5.8) circle (0.4);
\draw (1.3, 5.4) -- (1.3, 6.2);
\draw[very thick] (2.7,5.8) circle (0.4);
\draw (2.7, 5.4) -- (2.7, 6.2);
\draw[very thick] (2, 4.6) ellipse (1 and 0.4);
\draw (1.6, 4.25) -- (1.6, 4.95);
\draw (2.4, 4.25) -- (2.4, 4.95);
\draw[very thick] (1.3,3.4) circle (0.4);
\draw[very thick] (2.7,3.4) circle (0.4);
\draw[->] (1.65, 3.2) -- (2.35, 3.2);
\draw[->] (1.65, 3.6) -- (2.35, 3.6);
\draw[very thick] (6.3,3.4) circle (0.4);
\draw[->] (6.65, 3.2) -- (7.35, 3.2);
\draw[<-] (6.65, 3.6) -- (7.35, 3.6);
\draw[very thick] (7.7,3.4) circle (0.4);
\draw[very thick] (6.3,4.6) circle (0.4);
\draw (6.7, 4.6) -- (7.3, 4.6);
\draw (7.7, 4.2) -- (7.7, 5);
\draw[very thick] (7.7,4.6) circle (0.4);
\draw[very thick] (7, 5.8) circle (0.4);
\draw (6,5.8) -- (6.6, 5.8);
\draw[very thick] (5.6, 5.8) circle (0.4);
\draw (7.4, 5.8) -- (8, 5.8);
\draw[very thick] (8.4, 5.8) circle (0.4);
\draw[very thick] (6.3,2.2) circle (0.4);
\draw (5.3, 2.2) -- (5.9, 2.2);
\draw[very thick] (7.7,2.2) circle (0.4);
\draw[very thick] (4.9, 2.2) circle (0.4);
\draw (8.1, 2.2) -- (8.7, 2.2);
\draw[very thick] (9.1, 2.2) circle (0.4);
\draw[very thick] (7, 1) circle (0.4);
\draw (6,1) -- (6.6, 1);
\draw[very thick] (5.6, 1) circle (0.4);
\draw[very thick] (8.4, 1) circle (0.4);
\draw (8.4, 0.6) -- (8.4, 1.4);
\draw[dotted] (0,5.2) -- (10, 5.2);
\draw[dotted] (0, 4) -- (10,4);
\draw[dotted] (4, 2.8) -- (10, 2.8);
\draw[dotted] (4, 1.6) -- (10, 1.6);
\draw[ultra thick] (0, 2.8) -- (4, 2.8);
\draw[ultra thick] (2, 2.8) -- (2, 0.4);
\draw[thick] (0, 2.2) -- (4, 2.2);
\draw[very thick] (1,1.2) circle (0.4);
\draw[->] (1.3, 1.5) -- (1.3, 1.7) to [out = 90, in = 90] (0.7,1.7) -- (0.7, 1.5);
\draw[very thick] (3,1.2) circle (0.4);
\draw[<-] (3.3, 1.5) -- (3.3, 1.7) to [out = 90, in = 90] (2.7,1.7) -- (2.7, 1.5);
\draw[->] (1, 0.8) -- (1, 1.6);
\draw[->] (3, 0.8) -- (3, 1.6);
\end{tikzpicture}
\caption{\label{fig:subcases}Commutation chart: Thick lines represent components in $S_{c}$, thin lines the surgery arcs. If a surgery arc has no orientation, then both orientations lead to the same result. We refer to the types as A.1 to A.3, and C.1 to C.5 (read from top to bottom).}
\end{figure}

For the cases of type A we get the two homomorphisms to be non-zero, and differ by a factor $-1$, for the cases of type C, the two homomorphisms are equal and non-zero. In the remaining two types both homomorphisms are $0$.

We say that a face $f$ in the hypercube is of type A etc.\ whenever the corresponding surgery picture is of that type.

\begin{definition}
Let $\mathcal{D}$ be a link diagram, where every crossing has a chosen arrow. A {\em sign assignment} is a function $\varepsilon$ from the set of edges in the hypercube to $\Z/2\Z$, such that for every face $f$ in the hypercube we have
\[
\varepsilon(e^{0\ast})+\varepsilon(e^{1\ast})+\varepsilon(e^{\ast0})+\varepsilon(e^{\ast1}) = 0 \in \Z/2\Z
\]
whenever $f$ is of type A or X, and
\[
\varepsilon(e^{0\ast})+\varepsilon(e^{1\ast})+\varepsilon(e^{\ast0})+\varepsilon(e^{\ast1}) = 1 \in \Z/2\Z
\]
whenever $f$ is of type C or Y. Here $e^{0\ast}, e^{1\ast}, e^{\ast 0}, e^{\ast 1}$ are the four edges of the face.
\end{definition}

Sign assignments do always exist, see \cite[Lm.1.3]{MR3071132}, and we will see how to construct one in Section \ref{sec:signs}.

Given a sign assignment $\varepsilon$, we get a coboundary map $\delta\colon C^\ast(\mathcal{D})\to C^{\ast+1}(\mathcal{D})$ by using $(-1)^{\varepsilon(e_c)}F_{e_c}$ between the various direct summands. 

The cochain complex $C^\ast(\mathcal{D})$ is in fact bigraded. We get a second grading, called the $q$-grading, on each $\Lambda^\ast(S_c)$, by declaring that elements of $\Lambda^r(S_c)$ have $q$-degree $s - 2r$, where $s$ is the number of components in the $1$-manifold $S_c$.

To make the coboundary $q$-grading preserving, and to make the chain homotopy type independent of the link diagram, we need to shift the gradings appropriately. Let $n_+$ be the number of positive crossings in the oriented link diagram $\mathcal{D}$, and $n_-$ the number of negative crossings. We then define
\begin{equation}\label{eq:shifted}
CO^i(\mathcal{D}) = C^{i+n_-}(\mathcal{D})\{i-2n_-+n_+\},
\end{equation}
where $\{j\}$ indicates a shift in $q$-grading, so that an element of $C^\ast(\mathcal{D})$ with $q$-grading $k$ has $q$-grading $k+j$ when viewed as an element of $C^\ast(\mathcal{D})\{j\}$.

It is shown in \cite{MR3071132} that the cohomology of $CO^\ast(\mathcal{D})$ is independent of the various choices, and we denote the resulting link invariants by $\Kho{i,j}(L)$, with $i$ referring to the homological grading, and $j$ to the $q$-grading.

\section{Chronological cobordisms}

The construction of the previous section is essentially combinatorial, but as was already pointed out in \cite{MR3071132}, one can consider a functor from the category $\cC$ of closed, smooth $1$-dimensional manifolds with oriented cobordisms as morphisms, to the category of graded $\Z$-modules, by sending the $1$-manifold $S$ to $\Lambda^\ast(S)$, and sending cobordisms to compositions of the $F_e$ (together with appropriate homomorphisms for birth and death cobordisms).

However, this leads to sign ambiguities, and an alternative approach was suggested in \cite{MR3363817}. Particularly the dotted chronological cobordisms of \cite[\S 11]{MR3363817} are useful to us, and we present a simplified version here.


\begin{definition}
Let $S_0$ and $S_1$ be closed $1$-dimensional manifolds smoothly embedded into $\R^2$. A {\em chronological cobordism between $S_0$ and $S_1$} is a cobordism $W$ smoothly embedded into $\R^2\times [0,1]$ such that the following hold:
\begin{enumerate}
\item There exists an $\varepsilon>0$ such that $W\cap \R^2\times [0,\varepsilon] = S_0\times [0,\varepsilon]$ and\\ $W\cap \R^2\times [1-\varepsilon,1] = S_1\times [1-\varepsilon,1]$.
\item The height function $\tau\colon W\to [0,1]$ given by projection to the third coordinate is a Morse function such that $\tau^{-1}(\{c\})$ has exactly one point for each critical value $c$ of $\tau$.
\end{enumerate}
A {\em dotted chronological cobordism between $S_0$ and $S_1$} is a pair $(W,D)$, where $W$ is a chronological cobordism between $S_0$ and $S_1$, and $D\subset W \cap \R^2\times (0,1)$ is a finite set such that $\tau(x)\not=\tau(y)$ whenever $x,y\in C\cup D$ are different. Here $C\subset W$ is the set of critical points of $\tau$.
\end{definition}

If $(W,D)$ is a dotted chronological cobordism, there is a total order $<$ on $C\cup D$ induced by the values under $\tau$. We say that $x,y\in C\cup D$ are {\em adjacent}, if $x \not= y$ and there is no $z\in D\cup C -\{x,y\}$ between $x$ and $y$ in this order.

Given a dotted chronological cobordism between $S_0$ and $S_1$, and one between $S_1$ and $S_2$, we can get a dotted chronological cobordism between $S_0$ and $S_2$ by stacking them and rescaling. We want to have some basic identifications among such cobordisms, which among other things will make this associative. One thing we want to keep for now is the order on the critical points.

An  {\em order-preserving isotopy} between $W$ and $W'$ is a smooth map $\psi\colon W\times [0,1]\to \R^2\times [0,1]$ with $\psi(x,s) = x$ for $s$ near $0$ and $\psi(x,s)=\psi(x,1)$ for $s$ near $1$, such that each $\psi_t(W)$ for $t\in [0,1]$  is a dotted chronological cobordism with dotted set $D_t = \psi_t(D)$, and $(W',D') = (\psi_1(W),\psi_t(D))$. This allows us to move dots around on $W$, although in an order-preserving way.


\begin{definition}
A {\em framing} on a dotted chronological cobordism $(W,D)$ is a choice of orientation of a basis for each stable manifold $W^s(p)\subset W$, where $p$ is a critical point of $\tau$ of index $1$. A dotted chronological cobordism with a framing is called a {\em framed chronological cobordism}.
\end{definition}

A framing is determined by the choice of a tangent vector $X_p\in T_pW^s(p)\subset T_pW$. We will therefore visualize it with an arrow through the various critical points.

\begin{remark}
An order-preserving isotopy also isotopes a framing. We therefore put an equivalence relation on framed chronological cobordisms generated by order-preserving isotopies, and demanding that framings are respected.
\end{remark}

Let $(W,D)$ be a dotted chronological cobordism. We then define the {\em degree of $W$} as 
\[
\deg W = 2\cdot |D| - \sum_{p\in C} (-1)^{\ind p}.
\]

\begin{definition}
Let $\dChr(\emptyset)$ be the category whose objects are pairs $(S,q)$, where $S$ is a closed $1$-manifold embedded in $\R^2$ and $q\in \Z$, and where $\Mor((S_0,q_0),(S_1,q_1))$ are given by the equivalence classes of framed chronological cobordisms between $S_0$ and $S_1$ of degree $q_1-q_0$.
\end{definition}

The identity morphism is represented by the cylinder $S\times [0,1]$. Composition is given by stacking cobordisms on top of each other. We think of the integer $q$ as giving a $q$-degree to each object. 

This is of course much more rigid than if we would only consider dotted cobordisms as morphisms, but it makes it easy to construct functors from $\dChr(\emptyset)$ to other categories.

In particular, we get a functor $F$ from $\dChr(\emptyset)$ to the category of graded $\Z$-modules by $F(S,q) = \Lambda^\ast(S)\{+q\}$. To describe $F$ on morphisms, note that all dotted chronological cobordisms are compositions of the five basic cobordisms listed in Figure \ref{fig:basicCob}, together with cylinders that can permute the circles.


\begin{figure}[ht]
\begin{tikzpicture}
\bowl{0}{1}{1}{0.1}
\node at (1,0) {,};
\bowlud{1.4}{0}{1}{0.1}
\node at (2.4,0) {,};
\cylinder{2.8}{0}{1}{1}
\node at (3.2,0.3) [scale = 0.7] {$\bullet$};
\node at (3.8,0) {,};
\pop{5}{0}{1}
\popud{8.3}{1}{1}
\node at (7,0.1) {, and};
\node at (10.1,0) {.};
\draw[->] (5.2,0.4) -- (5.6,0.8);
\draw[->] (8.45, 0.4) -- (8.95,0.4);
\end{tikzpicture}
\caption{\label{fig:basicCob} The five basic cobordisms birth, death, dotting, split, and merge. Notice that for merge and split there also exist versions with the opposite framing.}
\end{figure}

Given the $1$-manifold $S$, we denote its components by $a,b,c,\cdots$ and use these letters also for the generators of the exterior algebra $\Lambda^\ast(S)$. A cylinder that permutes the components of $S$ is assigned the isomorphism of $\Lambda^\ast(S)$ that permutes the letters $a,b,c,\cdots$ in the way suggested by the cylinder.

The birth morphism from $(S,q)\to (S',q-1)$ is assigned the inclusion $\Lambda^\ast(S)\{q\}\to \Lambda^\ast(S')\{q-1\}$. 

For the death morphism $(S,q)\to (S',q-1)$ assume that the disappearing component is $a$. If $w = a_1\cdots a_k$ is a word in the components of $S$, it represents an element $\hat{w} = a_1\wedge \cdots \wedge a_k\in \Lambda^\ast(S)$. If $a=a_i$ for a unique $i$ we send $\hat{w}$ to $(-1)^{i-1} a_1\wedge \cdots \wedge a_{i-1}\wedge a_{i+1} \wedge \cdots \wedge a_k$. Otherwise it is send to $0$.

The dotting morphism $(S,q) \to (S,q+2)$ is send to left multiplication by $a$, where $a$ is the component on which the dot is.

For the split morphism we have a component $a$ which splits into two components $a_1$ and $a_2$. We would like to call one of these $a$, and use a new name $b$ for the other. To decide which one is $a$, take the tangent vector $X_p$ of the critical point which represents the framing, and rotate\footnote{Notice that the tangent space $T_pW =\R^2\times \{t\}$ for some $t\in (0,1)$, so `rotating to the left' is meant as a counter clockwise rotation in the plane.} it to the left until it is tangent to the unstable manifold $W^u(p)$. Now follow the unstable manifold in the direction of the rotated vector until we reach a component. This component is component $a$. The induced homomorphism on exterior algebras is now left multiplication by $a-b$. 

For the merge morphism two components $a,b$ of $S$ merge into one component $c$ of $S'$. The corresponding homomorphism on exterior algebras sends words $w$ to the word where each $a$ and $b$ is replaced by $c$. Notice that the framing has no influence in this case.

Let us turn $\dChr(\emptyset)$ into an additive category as in \cite[\S 3]{MR2174270}. That is, we first make it pre-additive by keeping the same objects, but turning morphism sets into free abelian groups generated by these sets. Then we consider formal direct summands of objects as the new objects, and morphisms as modelled on matrices. See \cite[Def.3.2]{MR2174270} for details. We call this new additive category $\Z\dChr(\emptyset)$.

The functor $F$ extends to $\Z\dChr(\emptyset)$, but we would like to have a functor from a category where the morphisms are not as rigid.

Let us introduce an equivalence relation $\sim$ on morphisms in $\Z\dChr(\emptyset)$ generated by the following. These pictures are local in the sense that they can be part of larger cobordisms. However, it is assumed that there are no other critical points or dottings on the shown levels.
\begin{equation}\label{eq:firstequiv}
\begin{tikzpicture}[baseline={([yshift=-.5ex]current bounding box.center)}, scale = 0.8]
\sphere{0}{0}{0.5}
\node at (1.1,0) {$\sim 0$,};
\spheredot{3}{0}{0.5}
\node at (4.1,0) {$\sim 1$,};
\plane{5}{-0.5}
\node at (5.5,0.2) [scale = 0.75] {$\bullet$};
\node at (5.5,-0.2) [scale = 0.75] {$\bullet$};
\node at (6.6,0) {$\sim 0$.};
\end{tikzpicture}
\end{equation}
Here $\sim 1$ means that any cobordism with a dotted sphere is equivalent to the one with this sphere removed.

We also want the neck-cutting relation already present in \cite{MR2174270}.
\begin{equation}\label{eq:neckcut}
\begin{tikzpicture}[baseline={([yshift=-.5ex]current bounding box.center)}, scale =1] 
\cylinder{0}{0}{1}{1.5}
\node at (1.25,0.75) {$\sim$};
\bowl{1.6}{1.5}{1}{0.1}
\bowlud{1.6}{0}{1}{0.1}
\node at (2,0.1) [scale = 0.75] {$\bullet$};
\node at (2.85,0.75) {$+$};
\bowl{3.2}{1.5}{1}{0.1}
\bowlud{3.2}{0}{1}{0.1}
\node at (3.6,1.2) [scale = 0.75] {$\bullet$};
\end{tikzpicture}
\end{equation}

Note that (\ref{eq:firstequiv}) and (\ref{eq:neckcut}) are exactly the relations used in \cite[\S 11.2]{MR2174270}. But right now our category is still very rigid because of the chronologies. We first want to loosen the behaviour of the framings.

\begin{equation}\label{eq:secondequiv}
\begin{tikzpicture}[baseline={([yshift=-.5ex]current bounding box.center)}]
\pop{0}{0}{0.8}
\pop{2.6}{0}{0.8}
\popud{6}{0.8}{0.8}
\popud{8.4}{0.8}{0.8}
\draw[->] (0.2,0.35) -- (0.5,0.65);
\draw[<-] (2.8,0.35) -- (3.1,0.65);
\node at (1.65,0.3) {$\sim -$};
\draw[->] (6.15,0.33) -- (6.55,0.33);
\draw[<-] (8.5,0.33) -- (8.9,0.33);
\node at (7.55,0.35) {$\sim$};
\node at (4, 0.1) {$,$};
\end{tikzpicture}
\end{equation}
Because of the right equivalence we can drop the arrow from pictures involving merges.

Consider the diagrams in Figure \ref{fig:subcases}. We can consider them as {\em surgery diagrams} which give rise to two chronological cobordisms $W$ and $\bar{W}$ who differ in the order of the index $1$ critical points. We then declare $W\sim\bar{W}$, if the corresponding surgery diagram is of type C, and $W\sim -\bar{W}$, if it is of type A. We do not need to consider types X and Y, as each cobordism will turn out to be equivalent to $0$.

Similarly, if we have two chronological cobordisms $W$ and $\bar{W}$ that both have two critical points, at least one of which has index $0$ or $2$, and the cobordisms agree without the chronology, we set $W\sim \bar{W}$, if there is a birth or a merge, and $W\sim -\bar{W}$ otherwise.

%
%
When canceling two adjacent critical points we add the following equivalences.
\begin{equation}\label{eq:cancelcrit}
\begin{tikzpicture}[baseline={([yshift=-.5ex]current bounding box.center)}, scale = 0.8]
\cancelot{0}{0}{1}
\cylinder{3}{0}{1}{2}
\cancelzo{6}{2}{1}
\draw[<-] (0.25,0.45) -- (0.55,0.75);
\node at (2.3,1) {$\sim$};
\node at (4.6,1) {$\sim$};
\end{tikzpicture}
\end{equation} 

To slide dots past critical points, we add the following equivalences.
\begin{equation}\label{eq:fifthequiv}
\begin{tikzpicture}[baseline={([yshift=-.5ex]current bounding box.center)}]
\bowl{0}{1}{0.75}{0.5}
\cylinder{0.8}{0}{0.75}{1.333}
\node at (1.125,0.7) [scale = 0.75] {$\bullet$};
\node at (1.75,0.5) {$\sim$};
\node at (3.6,0) {,};
\bowl{2.05}{1}{0.75}{0.1}
\cylinder{2.85}{0}{0.75}{1.333}
\node at (3.175,0.3) [scale = 0.75] {$\bullet$};
\cylinder{3.8}{0}{0.75}{1.333}
\node at (4.125,0.7) [scale = 0.75] {$\bullet$};
\cylinder{4.6}{0}{0.75}{1.333}
\node at (4.925,0.3) [scale = 0.75] {$\bullet$};
\node at (5.7,0.5) {$\sim -$};
\cylinder{6.1}{0}{0.75}{1.333}
\node at (6.425,0.3) [scale = 0.75] {$\bullet$};
\cylinder{6.9}{0}{0.75}{1.333}
\node at (7.225,0.7) [scale = 0.75] {$\bullet$};
\node at (7.65,0) {,};
\bowlud{7.85}{0}{0.75}{0.5}
\cylinder{8.65}{0}{0.75}{1.333}
\node at (8.975,0.3) [scale = 0.75] {$\bullet$};
\node at (9.75,0.5) {$\sim -$};
\bowlud{10.15}{0}{0.75}{0.1}
\cylinder{10.95}{0}{0.75}{1.333}
\node at (11.275,0.7) [scale = 0.75] {$\bullet$};
\end{tikzpicture}
\end{equation}
and
\begin{equation}\label{eq:sixthequiv}
\begin{tikzpicture}[baseline={([yshift=-.5ex]current bounding box.center)}]
\popud{0}{1}{1}
\node at (0.4,0.6) [scale = 0.75] {$\bullet$};
\node at (1.95,0.5) {$\sim$};
\popud{3}{1}{1}
\node at (2.6,0.1) [scale = 0.75] {$\bullet$};
\node at (4.75,0) {,};
\pop{6}{0}{1}
\node at (6.4,0.4) [scale = 0.75] {$\bullet$};
\node at (8.1,0.5) {$\sim -$};
\pop{9.3}{0}{1}
\node at (8.9,0.7) [scale = 0.75] {$\bullet$};
\end{tikzpicture}
\end{equation}

In the split cobordism the result does not depend on the framing.

\begin{lemma}
We have
\[
\begin{tikzpicture}
\cylinder{0}{0}{0.5}{2}
\node at (0.2,0.75) [scale = 0.6] {$\bullet$};
\popudd{1}{0.5}{0.5}
\cylinder{1}{0.5}{0.5}{1}
\node at (2.2,0.5) {$\sim$};
\cylinder{2.5}{0}{0.5}{2}
\node at (2.7,0.25) [scale = 0.6] {$\bullet$};
\popud{3.5}{1}{0.5}
\cylindert{3.1}{0}{0.5}{1}
\cylindert{3.9}{0}{0.5}{1}
\node at (4.8,0.5) {and};
\cylinder{5.3}{0}{0.5}{2}
\node at (5.5,0.75) [scale = 0.6] {$\bullet$};
\popd{6.3}{0}{0.5}
\cylinder{5.9}{0.5}{0.5}{1}
\cylinder{6.7}{0.5}{0.5}{1}
\node at (7.7,0.5) {$\sim \, -$};
\cylinder{8.2}{0}{0.5}{2}
\node at (8.4,0.25) [scale = 0.6] {$\bullet$};
\pop{9.2}{0.5}{0.5}
\cylindert{9.2}{0}{0.5}{1}
\node at (10.2,0.4) {.};
\end{tikzpicture}
\]
\end{lemma}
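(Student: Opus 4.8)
The essential content of both statements is a single sign, and the cleanest way to see where it comes from is to evaluate the two composites under the functor $F$. A dot on a component $X$ not taking part in the surgery acts as left multiplication by the degree-one generator $X$. The merge homomorphism is the ring map identifying the two merging circles and fixing everything else, so it commutes with left multiplication by $X$: the two orders of a merge and a bystander dot give the same map, and the first equivalence carries no sign. The split homomorphism is left multiplication by the degree-one element $a-b$, and in the exterior algebra left multiplication by $X$ anticommutes with left multiplication by $a-b$: the two orders differ by $-1$, which is the sign in the second equivalence. This also accounts for the preceding remark, since changing the framing of the split replaces $a-b$ by $b-a=-(a-b)$ and so multiplies both sides of the asserted equivalence by the same $-1$, leaving it intact; the split may therefore be drawn without an arrow. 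My plan is to turn this computation into an argument internal to $\Z\dChr(\emptyset)/{\sim}$.

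A dot cannot be slid past a critical point by an order-preserving isotopy, so I would move the dot across the surgery by a neck-cut. Apply (\ref{eq:neckcut}) to the cylinder on $X$ at a level adjacent to the surgery saddle; of the two resulting terms, the one that places a second dot on the $X$-piece already carrying the original dot vanishes by the double-dot relation in (\ref{eq:firstequiv}), and the surviving term exhibits $X$ as capped off and immediately re-created, with the surgery saddle next to that death--birth pair. Then commute the death and the birth of $X$ across the surgery using the reordering rule for a pair of adjacent critical points, at least one of index $0$ or $2$, stated before (\ref{eq:cancelcrit}): past a merge both moves are sign-free, since a merge is present in each pair, whereas past a split the death--split pair contains neither a birth nor a merge and so contributes a $-1$, while the birth--split pair still contributes none. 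The auxiliary dots produced by the neck-cut are repositioned using (\ref{eq:fifthequiv}). Collecting the moves returns the dot to the other side of the surgery with overall sign $+1$ in the merge case and $-1$ in the split case, as claimed.

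Throughout the split argument the framing need not be chosen: the split cobordism appears with the same unlabelled framing on both sides of the equivalence, and the manipulation can be carried out without ever naming the two circles it produces, exactly as in the dot-sliding relations (\ref{eq:sixthequiv}); this is the in-category counterpart of the framing discussion of the first paragraph. The computation under $F$ also serves as an independent check once the internal argument is assembled.

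The step I expect to be the main obstacle is the sign bookkeeping: every invocation of (\ref{eq:secondequiv}), (\ref{eq:cancelcrit}), (\ref{eq:fifthequiv}) and every commutation of adjacent critical points is a potential $-1$, and one must verify that these collapse to precisely the sign in the statement -- in particular, that the term discarded through the double-dot relation is genuinely the only vanishing one, and that no stray sign enters when the re-created component $X$ is re-indexed among the generators of the exterior algebra in the birth and death maps.
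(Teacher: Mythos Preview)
Your internal argument has a circularity that the sign bookkeeping does not reach. Neck-cut the bystander cylinder $X$ at a level strictly between the original dot and the saddle (this is the only placement at which the double-dot relation can fire, since the two dots must be adjacent in the chronology with no critical point between them). The surviving term then has chronology
\[
(\text{original dot on lower }X)\;(\text{death of }X)\;(\text{birth of }X)\;(\text{new dot on upper }X)\;(\text{saddle}),
\]
because the neck-cutting relation places its dot immediately adjacent to the birth, hence still \emph{below} the saddle level. To commute the death and the birth past the saddle as you propose, you must first get this new dot out of the way; but (\ref{eq:fifthequiv}) only slides dots past births, deaths and other dots, while (\ref{eq:sixthequiv}) only slides a dot past a saddle lying on the \emph{same} component. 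Sliding a bystander dot past a saddle is exactly the statement of the lemma, so the argument loops. Cutting above the saddle instead does not rescue the plan: then neither neck-cut term vanishes, since the two dots on the lower $X$-disc are separated by the saddle and cannot be brought adjacent without the very move you are trying to establish.

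The paper's route avoids this by going \emph{through} the surgery component rather than around it: one uses (\ref{eq:cancelcrit}) to introduce auxiliary births and deaths and then neck-cutting to produce a cylinder joining $X$ to the surgery component; the dot can then be transported onto that component, slid past the saddle by the generating relation (\ref{eq:sixthequiv}), and transported back, with the tube removed by neck-cutting again. Your first-paragraph computation under $F$ is a correct check on the sign and on the framing-independence remark, but, as you note, it is not itself an internal proof.
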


\begin{proof}
Both proofs are very similar. Introduce births and deaths using (\ref{eq:cancelcrit}), then use the neck-cutting relation to introduce a cylinder between the two components. Slide the dots using (\ref{eq:fifthequiv}) and (\ref{eq:sixthequiv}), and then remove the cylinder with the neck-cutting relation again.
\end{proof}

Taking the quotient by this equivalence relation gives us an additive category that we denote by $\Z\dChrq(\emptyset)$.

\begin{lemma}\label{lm:functor}
The functor $F$ induces a well defined functor from $\Z\dChrq(\emptyset)$ to the category of graded $\Z$-modules.
\end{lemma}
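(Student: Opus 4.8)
The plan is to exploit that $F$ is already a functor on the rigid category $\Z\dChr(\emptyset)$, so the only thing to establish is that $F$ sends $\sim$-equivalent morphisms to \emph{equal} homomorphisms; well-definedness of the induced functor on $\Z\dChrq(\emptyset)$ then follows formally, since $\Z\dChrq(\emptyset)$ is the quotient of $\Z\dChr(\emptyset)$ by the additive ideal generated by the differences of the two sides of the listed relations. Because $\sim$ is generated by \emph{local} relations, the reduction is routine: if $W$ and $W'$ agree outside a region where $W$ restricts to the left-hand side $L$ and $W'$ to the right-hand side $L'$ of one relation (or to a $\Z$-linear combination of such, as in neck-cutting), then writing $W = A\circ L\circ B$ and $W' = A\circ L'\circ B$ and using functoriality of $F$ on $\Z\dChr(\emptyset)$, the identity $F(W) = F(W')$ follows from $F(L) = F(L')$; an induction on the number of applications of the generators finishes the argument. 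One should also note that $F$ is visibly unchanged by order-preserving isotopies, in particular by sliding dots along components, since $F$ records only the combinatorial type of each basic cobordism and which circles it involves, not the embedding or the positions of the dots; and derived identities such as those of the preceding Lemma then require no separate check.

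It therefore remains to verify $F(L) = F(L')$ for each generating relation, a finite collection of computations on the bases of the exterior algebras $\Lambda^\ast(S)$ given by words in the components. The relations not involving the chronology are immediate from the formulas for $F$ on the five basic cobordisms: an undotted sphere is a birth followed by the death of the same fresh circle, hence is sent to $0$ because nothing involves the new circle after the birth; a dotted sphere is birth, then left multiplication by the new generator $a$, then death of $a$, which composes to the identity, so it may be erased; two dots on a component give $a\wedge a\wedge(-) = 0$; and the neck-cutting relation \eqref{eq:neckcut} is checked directly on the basis $\{1,a\}$ of $\Lambda^\ast$ of the cut component. The relations governing the framings and the chronology --- equation \eqref{eq:secondequiv}, the type~A and type~C commutation relations coming from Figure \ref{fig:subcases}, the rule for pairs of critical points one of which has index $0$ or $2$, the cancellation relations \eqref{eq:cancelcrit}, and the dot-sliding relations \eqref{eq:fifthequiv} and \eqref{eq:sixthequiv} --- each require tracking the signs $F$ produces: the $(-1)^{i-1}$ in the death map, the absence of any sign in the merge map, and, for a split, the rotation-of-the-framing convention deciding which of the two new components is $a$ and which is $b$ in the multiplication by $a-b$. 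In each case one checks that reversing the relevant framing, or transposing two adjacent critical points, alters the homomorphism exactly by the prescribed sign (for instance, reversing the framing on a split replaces $a-b$ by $b-a = -(a-b)$, giving the left relation of \eqref{eq:secondequiv}), and that for a split the dot-sliding sign is independent of the framing as claimed.

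I expect the main obstacle to be the verification of the type~A and type~C commutation relations: for each of the local configurations A.1--A.3 and C.1--C.5 one must compute both composites $F_{e^{1\ast}}\circ F_{e^{\ast0}}$ and $F_{e^{\ast1}}\circ F_{e^{0\ast}}$ using the split framing convention and confirm that they agree (type~C) or are negatives of one another (type~A). This is the point at which the combinatorially defined $F$ has to be shown compatible with the sign-assignment dichotomy underlying the definition of $\delta$; concretely it amounts to redoing the case analysis of \cite[\S1]{MR3071132} in the language of framed chronological cobordisms, being careful that two index-$1$ critical points supported on disjoint parts of the diagram genuinely commute, that a merge does not see the framing, and that the rotation rule is applied consistently at each split. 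Once these sign computations are in place, the remaining verifications --- the topological relations, invariance under order-preserving isotopy, and compatibility with the additive (matrix) structure --- are routine.
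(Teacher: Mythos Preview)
Your proposal is correct and follows exactly the approach of the paper: verify that $F$ respects each generating relation of $\sim$, so that it descends to the quotient category. The paper's own proof is considerably terser---it states that the many cases are straightforward and works out only the first relation in \eqref{eq:cancelcrit} as a sample---whereas you outline the verification of each family of relations and correctly identify the type~A/C commutation checks (the case analysis from \cite{MR3071132}) as the place where most of the work lies.
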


\begin{proof}
We need to check that all of the equivalences described lead to the same homomorphisms. Since there are a lot of cases, this is a bit lengthy, but none of the cases create any difficulties. As an example, let us check the first equivalence in (\ref{eq:cancelcrit}) preserves the homomorphism. Consider the cobordism where we first go through a split and then a death. Both domain and target are the same closed manifold $S$, and denote by $a$ the component that is visible. 
After the split we get a new component that we denote $b$, and because of the orientation of the stable manifold the homomorphism $\Lambda^\ast(S) \to \Lambda^\ast(S\cup b)$ is left multiplication by $(b-a)$. The homomorphism $\Lambda^\ast(S\cup b) \to \Lambda^\ast(S)$ induced by the death removes any $b$ from a word (with a sign depending on its position). Taking $w\in \Lambda^\ast(S)$ first leads to $b\wedge w - a\wedge w$, and removing $b$ from these words leads to $w$, since $a\wedge w$ does not contain any $b$. So the composition is the identity, as is the case for the cylinder.
\end{proof}

Given a link diagram $\mathcal{D}$ and a sign assignment $\varepsilon$ we can now form a cochain complex $[\mathcal{D}]$ over $\Z\dChrq(\emptyset)$, and applying the functor $F$ leads to the cochain complex from Section \ref{sec:recap}. 

Delooping \cite[Lm.4.1]{MR2320156} also works in this setting, compare \cite[Prop.11.4]{MR3363817}, so we should analyze $\Mor((\emptyset,q),(\emptyset,p))$ in $\Z\dChrq(\emptyset)$. We can move any dots close to the top, so any component with more than one dot leads to a zero morphism. Also, by the neck-cutting relation, (\ref{eq:sixthequiv}), and (\ref{eq:fifthequiv}), we have
\[
\begin{tikzpicture}
\popd{0}{-0.5}{0.5}
\popud{0}{1.5}{0.5}
\cylindert{-0.4}{0}{0.5}{2}
\cylindert{0.4}{0}{0.5}{2}
\node at (1.25,0.5) {$\sim$};
\popd{2}{-0.5}{0.5}
\popud{2}{1.5}{0.5}
\cylindert{1.6}{0}{0.5}{2}
\bowld{2.4}{1}{0.5}
\bowlud{2.4}{0}{0.5}{0.1}
\node at (4.6,0) [scale = 0.5] {$\bullet$};
\node at (3.25,0.5) {$+$};
\popd{4}{-0.5}{0.5}
\popud{4}{1.5}{0.5}
\cylindert{3.6}{0}{0.5}{2}
\bowld{4.4}{1}{0.5}
\bowlud{4.4}{0}{0.5}{0.1}
\node at (2.6,0.85) [scale = 0.5] {$\bullet$};
\node at (5.25,0.5) {$\sim$};
\popd{6}{-0.5}{0.5}
\popud{6}{1.5}{0.5}
\cylindert{5.6}{0}{0.5}{2}
\bowld{6.4}{1}{0.5}
\bowlud{6.4}{0}{0.5}{0.1}
\node at (8.225,-0.3) [scale = 0.5] {$\bullet$};
\node at (7.25,0.5) {$-$};
\popd{8}{-0.5}{0.5}
\popud{8}{1.5}{0.5}
\cylindert{7.6}{0}{0.5}{2}
\bowld{8.4}{1}{0.5}
\bowlud{8.4}{0}{0.5}{0.1}
\node at (6.225,1.3) [scale = 0.5] {$\bullet$};
\node at (9.4, 0.5) {$\sim 0$,};
\end{tikzpicture}
\]
so any surface of genus at least $1$ also leads to the zero morphism. So the morphism group from an emptyset to an emptyset is $\Z$ if $q=p$, and $0$ otherwise. 

Now let $B$ be a compact $2$-dimensional manifold smoothly embedded in $\R^2$, and let $\dot{B}\subset \partial B$ a finite subset such that $\dot{B}\cap C$ consists of an even number of points for every component $C$ of $\partial B$. 

For example, given a link diagram we could choose $B$ to be a disjoint union of small discs centered at (some of) the crossings of the diagram, and let $\dot{B}$ be the intersection of $\partial B$ with the diagram. 

Let $S$ be a compact $1$-manifold smoothly embedded in $B$ such that $\partial S = \dot{B}$ and $S\cap \partial B$ is transverse. We can define framed chronological cobordisms $(W,D)$ between such $S_0$ and $S_1$ as before, but we also require that $W\subset B\times [0,1]$ with $\partial W = W \cap (\partial B\times [0,1] \cup B\times \{0,1\})$ and that $W$ is a cylinder\footnote{This means that $S_0$ and $S_1$ have to agree near their common boundary for $W$ to exist.} near $\partial S_0\times [0,1]$.  So $W$ is a manifold with corners, but nothing interesting happens near the corner points. The definition of degree extends to such cobordisms, and we can also consider the same type of equivalence relation on them.

Define a category $\dChr(B,\dot{B})$ where the objects are pairs $(S,q)$ with $q\in \Z$ and $S$ a compact $1$-manifold smoothly embedded into $B$ such that $\partial S = \dot{B}$ and $S\cap \partial B$ is transverse, and morphisms between $(S_0,q_0)$ and $S_1,q_1)$ are given by framed chronological cobordisms between $S_0$ and $S_1$ of degree $q_1-q_0$.

Again we can turn this into an additive category $\Z\dChr(B,\dot{B})$. Of course we want to define an equivalence relation on the morphisms, and obtain a quotient category as before. All the equivalences described for $\Z\dChr(\emptyset)$ can be used as before, but we would also like to declare equivalences if some of the components are manifolds with corners. 

Before we do this let us introduce a {\em movie presentation} for chronological cobordisms. In Figure \ref{fig:basicmove} we list the moves for the basic cobordisms birth, death, dotting, and saddle. Any chronological cobordism can then be expressed as a finite sequence of these moves, which we write from left to right.

\begin{figure}[ht]
\begin{tikzpicture}
\birth{0}{0}{0.5}{very thick}
\death{1.5}{0}{0.5}{very thick}
\dotting{3}{0}{0.5}
\saddlenorth{4.5}{0}{0.5}
\node at (0.55,-0.45) {,};
\node at (2.05,-0.45) {,};
\node at (3.55,-0.45) {,};
\node at (5.05,-0.45) {.};
\end{tikzpicture}
\caption{\label{fig:basicmove} Movie presentations for birth, death, dotting, and saddle.}
\end{figure}


Since we require cobordisms to be cylinders near the corner points, we only consider births and deaths with the circle completely embedded in the interior of $B$. In the case of a saddle it may not be clear whether one has a merge or a split, and changing the orientation of the framing behaves differently for merges and splits. But note that merges and splits can be identified though, if there is only one interval ``missing''. So we can add the equivalences
\begin{equation}\label{eq:sevenequiv}
\begin{tikzpicture}[baseline={([yshift=-.5ex]current bounding box.center)}]
\relmerge{0}{0}{0.75}{thick}
\draw[->] (0.1,0.225) -- (0.275,0.225);
\node at (0.9,0.2) {$\sim$};
\relmerge{1.2}{0}{0.75}{thick}
\draw[<-] (1.3,0.225) -- (1.475,0.225);
\node at (2.4,0.225) {and};
\relsplit{3}{0}{0.75}{thick}
\draw[->] (3.225,0.3) -- (3.225,0.15);
\node at (4.1,0.2) {$\sim -$};
\relsplit{4.5}{0}{0.75}{thick}
\draw[<-] (4.725,0.3) -- (4.724,0.15);
\end{tikzpicture}
\end{equation}
Instead of (\ref{eq:cancelcrit}) we use the movie presentations
\begin{equation}\label{eq:eightequiv}
\begin{tikzpicture}[baseline={([yshift=-.5ex]current bounding box.center)}]
\relsplit{0}{0}{0.75}{thick}
\draw[->] (0.225,0.3) -- (0.225,0.15);
\draw[thick] (1,0) to [out = 45, in = 315] (1,0.45);
\death{1.4}{0.225}{0.2}{thick}
\draw[dashed] (-0.2,0.6) rectangle (1.8,-0.2);
\draw[dashed] (0.8,0.6) -- (0.8, -0.2);
\node at (2.4,0.225) {$\sim 1\sim$};
\draw[thick] (3.2,0) to [out = 45, in = 315] (3.2,0.45);
\birth{3.6}{0.225}{0.2}{thick}
\relmerge{4.2}{0}{0.75}{thick}
\draw (4.3,0.225) -- (4.475,0.225);
\draw[dashed] (3,0.6) rectangle (5,-0.2);
\draw[dashed] (4.0,0.6) -- (4.0,-0.2);
\end{tikzpicture}
\end{equation}
The relations of (\ref{eq:fifthequiv}) remain, but with the dotted cylinder over a circle replaced by dotted cylinders over an interval. Instead of (\ref{eq:sixthequiv}) we use
\begin{equation}\label{eq:ninthequiv}
\begin{tikzpicture}[scale = 0.77, baseline={([yshift=-.5ex]current bounding box.center)}]
\relmerge{0}{0}{0.75}{thick}
\draw(0.1,0.225) -- (0.275,0.225);
\draw[thick] (1,0) to [out = 45, in = 315] (1,0.45);
\node[scale = 0.7] at (1.1,0.225) {$\bullet$};
\draw[dashed] (-0.2,0.6) rectangle (1.3,-0.2);
\draw[dashed] (0.8,0.6) -- (0.8,-0.2);
\node at (1.65,0.225) {$\sim$}; 
\relmerge{-2.6}{0}{0.75}{thick}
\node[scale = 0.7] at (-2.5,0.225) {$\bullet$};
\relmerge{-1.6}{0}{0.75}{thick}
\draw (-1.5,0.225) -- (-1.325,0.225);
\node at (-0.5,0.225) {$\sim$};
\draw[dashed] (-2.8,0.6) rectangle (-0.8,-0.2);
\draw[dashed] (-1.8,0.6) -- (-1.8,-0.2);
\relmerge{2.15}{0}{0.75}{thick}
\node[scale = 0.7] at (2.425,0.225) {$\bullet$};
\relmerge{3.15}{0}{0.75}{thick}
\draw (3.25, 0.225) -- (3.425,0.225);
\draw[dashed] (1.95,0.6) rectangle (3.95,-0.2);
\draw[dashed] (2.95,0.6) -- (2.95, -0.2);
\node at (4.5, 0.225) {and};
\draw[thick] (8.2,0) to [out = 45, in = 315] (8.2,0.45);
\node[scale = 0.7] at (8.3,0.225) {$\bullet $};
\relsplit{8.7}{0}{0.75}{thick}
\draw[->] (8.925,0.3) -- (8.925,0.15);
\draw[dashed] (5,0.6) rectangle (7,-0.2);
\draw[dashed] (6,0.6) to (6,-0.2);
\node at (7.5,0.225) {$\sim -$};
\relsplit{5.2}{0}{0.75}{thick}
\draw[->] (5.425,0.3) -- (5.425,0.15);
\relmerge{6.2}{0}{0.75}{thick}
\node[scale = 0.7] at (6.3,0.225) {$\bullet $};
\draw[dashed] (8,0.6) rectangle (9.5,-0.2);
\draw[dashed] (8.5,0.6) -- (8.5,-0.2);
\node at (9.85,0.225) {$\sim$};
\relsplit{10.35}{0}{0.75}{thick}
\draw[->] (10.575,0.3) -- (10.575,0.15);
\relmerge{11.35}{0}{0.75}{thick}
\node[scale=0.7] at (11.625,0.225) {$\bullet$};
\draw[dashed] (10.15,0.6) rectangle (12.15,-0.2);
\draw[dashed] (11.15,0.6) -- (11.15,-0.2);
\end{tikzpicture}
\end{equation}
However, to get cochain complexes from the situations we want to consider, we need to add relations that indicate how a saddle with four boundary points commutes with other morphisms. We will make these relations depend on a closure of $B$. In order to do this assume that $X$ is a compact $1$-dimensional manifold smoothly embedded in $\R^2 - (B-\partial B)$ with $\partial X = \dot{B}$ and $X\cap \partial B$ is transverse.

We then get a functor $F_X\colon \Z\dChr(B,\dot{B})\to \Z\dChr(\emptyset)$ by gluing objects $S\subset B$ with $X$ along $\dot{B}$, and using product cobordisms $X\times [0,1]$ for morphisms. Note that for a given $S$ we slightly deform $X$ near its boundary to get a smooth embedding of a closed $1$-manifold.

Consider a surgery diagram in $B$ that involves two surgeries. As before, this gives rise to two chronological cobordisms $W$ and $\bar{W}$. We then declare $W\sim \bar{W}$ if the corresponding surgery diagram after closure with $X$ is of type C or Y, and $W\sim -\bar{W}$ if the corresponding closed surgery diagram is of type A or X.

Similarly, if $W$ is a saddle, and $\bar{W}$ a birth, death, or dotting, we get two chronological cobordisms $W_1$ and $W_2$ by combining them and allowing different orders. We then declare $W_1\sim (-1)^\delta W_2$ with $\delta\in\{0,1\}$ so that $F_X(W_1)\sim (-1)^\delta F_X(W_2)$.

Finally, if $W$ is a saddle with framing, and $\bar{W}$ the same saddle with opposite framing, we declare $W\sim (-1)^\delta \bar{W}$, with $\delta=0$ if $F_X(W)$ is a merge, and $\delta=1$ if $F_X(W)$ is a split. Notice that (\ref{eq:sevenequiv}) is a special case of this.

We now form the resulting quotient category and denote it by $\Z\dChrqu{X}(B,\dot{B})$. Notice that we get an induced functor
\begin{equation}\label{eq:abandon}
F_X\colon \Z\dChrqu{X}(B,\dot{B}) \to \Z\dChrq(\emptyset).
\end{equation}
\begin{remark}
This functor is not faithful. For example, consider a surgery diagram that closes with $X$ to a surgery diagram of type X or Y. For the morphisms $W$ and $\bar{W}$ arising from the surgery diagram we get $F_X(W) = 0 = F_X(\bar{W})$. However, neither $W$ nor $\bar{W}$ need to be zero morphisms.
\end{remark} 

Given a tangle diagram $\mathcal{D}$ in a $B$ and an $X$ as above, we can combine $\mathcal{D}$ and $X$ to a link diagram $\mathcal{D}_X$. Choosing arrows on the crossings of $\mathcal{D}$ and a sign assignment, we can now form a chain complex $[\mathcal{D}]$ over $\Z\dChrqu{X}(B,\dot{B})$.

\begin{example}
Let $\mathcal{D}$ be the tangle diagram from Figure \ref{fig:example}, with $B$ a disc.

\begin{figure}[ht]
\begin{tikzpicture}
\draw[ultra thick] (2, 1) ellipse (2.5 and 1);
\draw[thick] (0.05,1.65) -- (0.7, 1);
\draw[thick] (0.9,0.8) -- (1.7, 0);
\draw[thick] (2.3,0) -- (3.95, 1.65);
\draw[thick] (3.8,0.3) -- (3.3, 0.8);
\draw[thick] (3.1,1) -- (2.9, 1.2) to [out = 135, in = 45] (1.1, 1.2) -- (0.2,0.3);
\draw[->] (3,0.9) -- (3.4, 0.9);
\draw[->] (0.6, 0.9) -- (1, 0.9);
\end{tikzpicture}
\caption{\label{fig:example}A tangle consisting of two crossings.} 
\end{figure}

Given $X$, we can find a sign assignment so that the resulting cochain complex $[\mathcal{D}]$ is of the form
\[
\begin{tikzpicture}
\draw[thick] (0,1) to [out = 45, in = 315] (0,1.6);
\draw[thick] (0.4,1) to [out = 135, in = 225] (0.4, 1.5) to [out = 45, in = 135] (0.8, 1.5) to [out = 315, in = 45] (0.8, 1);
\draw[thick] (1.2, 1) to [out = 135, in = 225] (1.2, 1.6);
\draw[->] (1.4,1.4) -- (3.4, 2.3);
\draw[->] (1.4, 1.2) -- (3.4, 0.3);
\draw[thick] (3.6, 2) -- (3.7,2.1) to [out = 45, in = 135] (3.9,2.1) -- (4, 2);
\draw[thick] (3.6, 2.6) -- (3.7, 2.4) to [out = 315, in = 225] (3.9, 2.4) -- (4,2.5) to [out = 45, in = 135] (4.4, 2.5) to [out = 315, in = 45] (4.4, 2);
\draw[thick] (4.8,2) to [out = 135, in = 225] (4.8, 2.6);
\draw[thick] (3.6,0) to [out = 45, in = 315] (3.6, 0.6);
\draw[thick] (4, 0) to [out = 135, in = 225] (4, 0.5) to [out = 45, in = 135] (4.4, 0.5) -- (4.5, 0.4) to [out = 315, in = 225] (4.7, 0.4) -- (4.8, 0.6);
\draw[thick] (4.4,0) -- (4.5, 0.1) to [out = 45, in = 135] (4.7,0.1) -- (4.8,0);
\node[scale = 0.7] at (5.1,0) {$\{+1\}$};
\node[scale = 0.7] at (5.1,2) {$\{+1\}$};
\draw[->] (5.2, 0.3) -- node [scale = 0.9, sloped, above] {$(-1)^{\varepsilon_X}$} (7.2, 1.2);
\draw[->] (5.2, 2.3) -- (7.2, 1.4);
\draw[thick] (7.4, 1) -- (7.5, 1.1) to [out = 45, in = 135] (7.7, 1.1) -- (7.8,1);
\draw[thick] (7.4, 1.6) -- (7.5, 1.4) to [out = 315, in = 225] (7.7, 1.4) -- (7.8, 1.5) to [out = 45, in = 135] (8.2, 1.5) -- (8.3, 1.4) to [out = 315, in = 225] (8.5, 1.4) -- (8.6, 1.6);
\draw[thick] (8.2, 1) -- (8.3, 1.1) to [out = 45, in = 135] (8.5, 1.1) -- (8.6, 1);
\node[scale = 0.7] at (8.9, 1) {$\{+2\}$};
\end{tikzpicture}
\]
The number in curly parentheses indicates a change in $q$-degree. The morphisms are the obvious saddle morphisms, with only one having a possible non-positive sign. The value $\varepsilon_X$ does depend on $X$. If we choose $X$ so that $\mathcal{D}_X$ is the diagram of a Hopf link, we get a type C.3 face. Changing one of the arrows on a crossing turns this into a type A.3 face. We can also choose $X$ to give us unknots leading to type A.2 or C.2.
\end{example}

\section{Sign assignments}\label{sec:signs}
An obvious problem with trying to mimick Bar-Natan's scanning algorithm in odd Khovanov homology is that the sign assignment required in the complex is not local. We want to take a closer look at how sign assignments are obtained, and use this to get a specific one which will be well adapted for our purposes.

Let $Q=[0,1]^n$ be the CW-complex obtained from the obvious CW-structure on $[0,1]$ using cartesian products. The $0$-cells correspond to the vertices, the $1$-cells to the edges, and the $2$-cells to the various faces of the hypercube described in Section \ref{sec:recap}.

In \cite{MR3071132} a cochain $\varphi\in C^2(Q;\Z/2\Z)$ is constructed by sending a $2$-cell to $0$ if the corresponding face is of type A or X, and to $1$ otherwise. By [ORS, Lm.2.1] this is a cocycle, and since $Q$ is contractible, there has to be a cochain $\varepsilon\in C^1(Q;\Z/2\Z)$ with $\delta \varepsilon = \varphi$, which is exactly the condition that $\varepsilon$ is a sign assignment.

To make this more constructive, consider a chain homotopy $H_\ast \colon C_\ast(Q;\Z/2\Z) \to C_{\ast+1}(Q;\Z/2\Z)$ between the identity and a constant map on $Q$. We can then simply choose $\varepsilon = \varphi \circ H_1$. Let us construct $H_1$ explicitly.

As mentioned before, we can identify a $0$-cell in $Q$ with $c=(c_1,\ldots,c_n)\in \{0,1\}^n$. Given such $c$ and $i\in \{1,\ldots,n\}$, let us write
\[
c^i = (c_1,\ldots,c_{i-1},0,\ldots,0)\in \{0,1\}^{n-1}.
\]
Recall the notation $e^i_c$ from Section \ref{sec:recap}, which we now use to describe the $1$-cells of $Q$.

We can think of $C_0(Q;\Z/2\Z)$, resp.\ $C_1(Q;\Z/2\Z)$, as freely generated by $c\in  \{0,1\}^n$, resp.\ by the $e_c^i$. If we define $H_0\colon C_0(Q;\Z/2\Z) \to C_1(Q;\Z/2\Z)$ by
\[
H_0(c) = \sum_{i=1}^n c_i e_{c^i}^i,
\]
then $\partial H_0(c) = c + (0,\ldots,0)$. We can think of $H_0(c)$ as a cellular path between $c$ and $(0,\ldots,0)$ obtained by changing the rightmost coordinate with entry $1$ to $0$, and continuing until all entries are $0$.

To describe $H_1$, let us think of $C_2(Q;\Z/2\Z)$ as freely generated by the faces $f_c^{ij}$ described in Section \ref{sec:recap}. Also let 
\[
c^{j-1}=(c_1,\ldots,c_{j-2},0,\ldots,0)\in \{0,1\}^{n-2},
\]
so that
\[
f_{c^{j-1}}^{ij} = (c_1,\ldots,c_{i-1},\ast,c_i,\ldots, c_{j-2},\ast,0,\ldots, 0).
\]
Now define
\begin{equation}\label{eq:AgeOne}
H_1(e_c^i) = \sum_{j=i+1}^n c_{j-1} f_{c^{j-1}}^{ij}.
\end{equation}

\begin{lemma}
We have
\[
H_0\partial + \partial H_1 = \id_{C_1}.
\]
\end{lemma}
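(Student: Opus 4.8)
The plan is to verify the chain-homotopy identity $H_0\partial + \partial H_1 = \id_{C_1}$ directly on a generator $e_c^i$, where $c = (c_1,\dots,c_{n-1}) \in \{0,1\}^{n-1}$ and $e_c^i$ is the edge obtained by inserting $\ast$ in position $i$. Everything here is over $\Z/2\Z$, so there are no signs to track, only a careful bookkeeping of which faces and edges appear an odd number of times. First I would compute $\partial e_c^i = c^0 + c^1$, where $c^0$ and $c^1$ are the two endpoints (differing only in the $\ast$-coordinate, which becomes $0$ resp.\ $1$), and then apply $H_0$ to each, using $H_0(d) = \sum_{k} d_k\, e_{d^k}^k$. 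The key observation is that $c^0$ and $c^1$ agree in every coordinate except position $i$ (where they are $0$ and $1$ respectively), so in the sum $H_0(c^0) + H_0(c^1)$ all terms with $k \neq i$ cancel in pairs, and the only surviving term is the $k = i$ term coming from $c^1$ (since $(c^1)_i = 1$ while $(c^0)_i = 0$). That surviving term is precisely $e_{(c^1)^i}^i = e_{c^i}^i$ in the notation of the lemma. Hence $H_0\partial(e_c^i) = e_{c^i}^i$, where $c^i = (c_1,\dots,c_{i-1},0,\dots,0)$.

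Next I would compute $\partial H_1(e_c^i)$. From (\ref{eq:AgeOne}), $H_1(e_c^i) = \sum_{j=i+1}^n c_{j-1}\, f_{c^{j-1}}^{ij}$, so I need the boundary of each square $f_{c^{j-1}}^{ij}$. This square has four edges: replacing the first $\ast$ (position $i$) by $0$ or by $1$ gives edges $e^{\ast 0}$ and $e^{\ast 1}$ in the $j$-th direction (sitting over the vertices $(\dots,0,\dots)$ and $(\dots,1,\dots)$ in the $i$-slot), and replacing the second $\ast$ (position $j$) by $0$ or by $1$ gives edges $e^{0\ast}$ and $e^{1\ast}$ in the $i$-th direction. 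Over $\Z/2\Z$, $\partial f_{c^{j-1}}^{ij}$ is the sum of these four edges. I would then sum over $j$ from $i+1$ to $n$, weighted by $c_{j-1}$, and organize the result into two groups: the "$i$-direction" edges $e^{\ast, 0/1}$ and the "$j$-direction" edges $e^{0\ast/1\ast}$.

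The decisive computation is that the $i$-direction edges telescope. The edge of $f_{c^{j-1}}^{ij}$ obtained by setting the $j$-th $\ast$ to $1$ is $e_{c^j}^i$ with $c^j = (c_1,\dots,c_{j-1},0,\dots,0)$ (the coordinate in slot $j$ is now $1$, but that's encoded in the subscript ordering), while setting it to $0$ gives $e_{c^{j-1}}^i$; the weight is $c_{j-1}$, which is exactly $1$ precisely when these two differ, so the weighted sum $\sum_{j} c_{j-1}(e^i_{c^{j-1}} + e^i_{c^j})$ telescopes. Carefully tracking endpoints, this collapses to $e_c^i + e_{c^i}^i$ (the top term $j$ running up through all coordinates recovers the original edge $e_c^i$, the bottom term is $e_{c^i}^i$). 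Meanwhile, the $j$-direction edges $\sum_{j=i+1}^n c_{j-1}(e^{0\ast} + e^{1\ast})$ are, after matching notation, exactly $H_0$ of something, and in fact precisely cancel the contribution $e_{c^i}^i$ from the other group together with reproducing nothing extra — more carefully, one checks these are exactly the terms $H_0\partial(e_c^i)$ does \emph{not} already account for, and they sum to $e^i_{c^i}$. Adding everything: $\partial H_1(e_c^i) = e_c^i + e_{c^i}^i$, and combined with $H_0\partial(e_c^i) = e_{c^i}^i$ we get $(H_0\partial + \partial H_1)(e_c^i) = e_c^i$ over $\Z/2\Z$.

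The main obstacle is purely notational: the index $j$ in $f_c^{ij}$ refers to the position \emph{after} insertion of the first $\ast$, so the slots $i$ and $j$ shift relative to the ambient coordinates, and one must be scrupulous about whether a given $c_k$ sits before or after each $\ast$ when identifying which face or edge each term denotes. I would handle this by fixing the explicit string notation $f_{c^{j-1}}^{ij} = (c_1,\dots,c_{i-1},\ast,c_i,\dots,c_{j-2},\ast,0,\dots,0)$ from the excerpt and reading off each of the four boundary edges as an explicit string, then matching against $H_0$'s output strings; once the two telescoping sums are written in this explicit form the cancellations are immediate. No geometric input is needed — this is a standard "straight-line homotopy on the cube" computation, and the only real work is confirming the telescoping identity $\sum_{j=i+1}^{n} c_{j-1}(e^i_{c^{j-1}} - e^i_{c^j}) = e^i_c - e^i_{c^i}$ holds with the index conventions in force.
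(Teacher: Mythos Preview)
Your overall strategy---direct computation on a basis element $e_c^i$, with the $i$-direction edges of $\partial H_1$ telescoping to $e_c^i + e_{c^i}^i$---is exactly what the paper does. However, your computation of $H_0\partial(e_c^i)$ contains an error that then forces the muddled handling of the $j$-direction edges.

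You claim that in $H_0(c^0) + H_0(c^1)$ all terms with $k \neq i$ cancel in pairs. This is only true for $k < i$. For $k > i$, the coefficients $(c^0)_k = (c^1)_k$ agree, but the truncations $(c^0)^k$ and $(c^1)^k$ do \emph{not}: they retain the $i$-th coordinate, which is $0$ in one and $1$ in the other. So $H_0\partial(e_c^i)$ is not just $e_{c^i}^i$; it is $e_{c^i}^i$ plus the two sums
\[
\sum_{k=i+1}^{n} c_{k-1}\,\bigl(c_1,\ldots,c_{i-1},0,c_i,\ldots,c_{k-2},\ast,0,\ldots,0\bigr)
+ \sum_{k=i+1}^{n} c_{k-1}\,\bigl(c_1,\ldots,c_{i-1},1,c_i,\ldots,c_{k-2},\ast,0,\ldots,0\bigr).
\]
These extra terms are precisely the $j$-direction edges $e^{0\ast} + e^{1\ast}$ coming from $\partial H_1(e_c^i)$, and they cancel in the sum $H_0\partial + \partial H_1$. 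They do not ``sum to $e_{c^i}^i$'' as you suggest; they are edges in the $k$-direction for $k > i$, not an $i$-direction edge at all. Once you correct $H_0\partial$, the $j$-direction edges of $\partial H_1$ match them term by term, the $i$-direction edges telescope to $e_c^i + e_{c^i}^i$ as you say, and the total is $e_{c^i}^i + (e_c^i + e_{c^i}^i) = e_c^i$.
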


\begin{proof}
We need to check that $H_0\partial e^i_c+\partial H_1 e^i_c = e^i_c$ for every basis element $e_c^i$. If we write
\[
e_c^i = (c_1,\ldots,c_{i-1},\ast,c_i,\ldots,c_{n-1}),
\]
we get $\partial e^i_c = (c_1,\ldots,c_{i-1},1,c_i,\ldots,c_{n-1}) + (c_1,\ldots,c_{i-1},0,c_i,\ldots,c_{n-1})$, and
\begin{align*}
H_0\partial e^i_c = (c_1,\ldots,c_{i-1},\ast,0,\ldots,0) &+ \sum_{j = i}^{n-1} c_j (c_1,\ldots,c_{i-1},1,c_i,\ldots,c_{j-1},\ast,0,\ldots,0)\\
&+ \sum_{j = i}^{n-1} c_j (c_1,\ldots,c_{i-1},0,c_i,\ldots,c_{j-1},\ast,0,\ldots,0)
\end{align*}
From (\ref{eq:AgeOne}) we see that the two sums also appear in $\partial H_1(e^i_c)$. The remaining two sums in $\partial H_1(e^i_c)$ are given by
\[
\sum_{j=i}^{n-1} c_j (c_1,\ldots,c_{i-1},\ast,c_i,\ldots,c_{j-1},\varepsilon, 0,\ldots,0),
\]
for $\varepsilon = 0,1$. Notice that 
\[
c_j (c_1,\ldots,c_{i-1},\ast,c_i,\ldots,c_{j-1},1, 0,\ldots,0) = c_j (c_1,\ldots,c_{i-1},\ast,c_i,\ldots,c_j, 0,\ldots,0),
\] 
so we get
\begin{align*}
H_0\partial e^i_c  + \partial H_1 e^i_c &+ (c_1,\ldots,c_{i-1},\ast,0,\ldots,0) \\
= & \sum_{j=i}^{n-1} c_j (c_1,\ldots,c_{i-1},\ast,c_i,\ldots,c_{j}, 0,\ldots,0) \\
& + \sum_{j=i}^{n-1} c_j (c_1,\ldots,c_{i-1},\ast,c_i,\ldots,c_{j-1}, 0,\ldots,0)
\end{align*}
Now let 
\[
K = \{ j\in \{i,\ldots,n-1\}\, | \, c_j = 1\}.
\]
We set $k_{\min} = \min K$ and $K_1 = K \cup \{n\} -\{k_{\min}\}$. Then
\begin{align*}
H_0\partial e^i_c  + \partial H_1 e^i_c + (c_1,\ldots,c_{i-1},\ast,0,\ldots,0) = & \sum_{j\in K_1} e^i_{c^j} + \sum_{j\in K} e^i_{c^j} \\
= & \,e^i_{c^n} + e^i_{c^{k_{\min}}}.
\end{align*}
Since $e^i_{c^n} = e^i_c$ and $e^i_{c^{k_{\min}}} = (c_1,\ldots,c_{i-1},\ast,0,\ldots,0)$, the result follows.
\end{proof}

Using the standard induction proof for the existence of a chain contraction in a free, bounded below chain complex, we can extend $H_1$ to the required chain homotopy $H_\ast$. To sum up, we get the following sign assignment for a link diagram.

\begin{lemma}\label{lm:signassign}
Let $\mathcal{D}$ be a link diagram with $n$ crossings, where every crossing has a chosen arrow. Then $\varepsilon$ defined by
\[
\varepsilon(e_c^i) = \sum_{j = i+1}^n c_{j-1}\varphi(f_{c^{j-1}}^{ij}) \in \Z/2\Z
\]
is a sign assignment, where $\varphi(f)=0$ whenever the face $f$ is of type A or X in the hypercube, and $1$ otherwise.\hfill \qed
\end{lemma}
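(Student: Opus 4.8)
The plan is to read the claimed formula as $\varepsilon=\varphi\circ H_1$ and then verify the defining property of a sign assignment in the compact form $\delta\varepsilon=\varphi$, where $\delta\colon C^1(Q;\Z/2\Z)\to C^2(Q;\Z/2\Z)$ is the cellular coboundary dual to $\partial$. First I would note that the right-hand side of the displayed formula is literally $\varphi(H_1(e_c^i))$: since $\varphi$ is $\Z/2\Z$-linear, applying it to (\ref{eq:AgeOne}) gives $\varphi(H_1(e_c^i))=\sum_{j=i+1}^{n}c_{j-1}\varphi(f_{c^{j-1}}^{ij})$. Thus, setting $\varepsilon:=\varphi\circ H_1$, the lemma asserts exactly that this $\varepsilon$ is a sign assignment.

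Next I would unpack that property. For a $2$-cell $f$ of $Q$ with edges $e^{0\ast},e^{1\ast},e^{\ast0},e^{\ast1}$ one has $\partial f=e^{0\ast}+e^{1\ast}+e^{\ast0}+e^{\ast1}$ in $C_1(Q;\Z/2\Z)$ (the mod-$2$ cellular boundary of a square), so $(\delta\varepsilon)(f)=\varepsilon(\partial f)$ is precisely the left-hand side of the relations in the definition of a sign assignment, while $\varphi(f)$ is $0$ for faces of type A or X and $1$ for faces of type C or Y, which is exactly the required right-hand side. Hence the lemma is equivalent to the single cochain identity $\delta\varepsilon=\varphi$ in $C^2(Q;\Z/2\Z)$.

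The substantive step is to prove $\varphi\circ H_1\circ\partial=\varphi$ as homomorphisms $C_2(Q;\Z/2\Z)\to\Z/2\Z$. Using the lemma just proved, $H_0\partial+\partial H_1=\id_{C_1}$, I would compute $\partial(\id_{C_2}-H_1\partial)=(\id_{C_1}-\partial H_1)\partial=H_0\partial\partial=0$, so $\id_{C_2}-H_1\partial$ has image contained in the cycle group $Z_2\subset C_2(Q;\Z/2\Z)$. Since $Q=[0,1]^n$ is contractible, $H_2(Q;\Z/2\Z)=0$, i.e.\ $Z_2=\partial C_3(Q;\Z/2\Z)$. Finally $\varphi$ is a cocycle (the fact recalled at the start of this section, \cite[Lm.2.1]{MR3071132}), so $\varphi$ vanishes on $\partial C_3$, hence on all of $Z_2$; therefore $\varphi\circ(\id_{C_2}-H_1\partial)=0$, which is $\delta\varepsilon=\varphi$. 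Read on an arbitrary face $f$ this says $\varepsilon(e^{0\ast})+\varepsilon(e^{1\ast})+\varepsilon(e^{\ast0})+\varepsilon(e^{\ast1})=\varphi(f)$, the sign-assignment condition.

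I do not expect a genuine obstacle: the only care needed is bookkeeping with the mod-$2$ boundary maps and invoking the cited cocycle property of $\varphi$ for the complex $C_\ast(Q;\Z/2\Z)$ (over $\Z/2\Z$ all orientation and sign subtleties disappear). Equivalently, as indicated just before the statement, one may instead extend the already-defined $H_0,H_1$ to a full chain contraction $H_\ast$ of the free bounded-below complex $C_\ast(Q;\Z/2\Z)$ by the standard inductive argument, and then conclude from $\varphi=\varphi(H_1\partial+\partial H_2)=\varphi H_1\partial+(\delta\varphi)H_2=\varepsilon\circ\partial$; this route simply trades the contractibility input for the (equally routine) extension step.
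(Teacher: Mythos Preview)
Your proposal is correct and follows the paper's approach. The paper sets $\varepsilon=\varphi\circ H_1$, extends $H_0,H_1$ to a full chain contraction $H_\ast$, and then reads off $\delta\varepsilon=\varphi$ from the degree-$2$ homotopy identity together with $\delta\varphi=0$; this is exactly the alternative route you spell out at the end, while your primary argument (using $H_2(Q;\Z/2\Z)=0$ directly to kill $\varphi$ on $Z_2$) is an equivalent repackaging that avoids naming $H_2$.
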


\begin{remark}
We get $\varepsilon(e)=0$ if 
\[
e = (c_1,\ldots,c_{i-1},\ast,0,\ldots,0).
\]
More generally, for $c\in \{0,1\}^{n-1}$ and $i\in \{1,\ldots,n\}$, we need to consider the faces of the form
\[
f = (c_1,\ldots,c_{i-1},\ast, c_i,\ldots, c_{j-2},\ast,0,\ldots,0),
\]
with $c_{j-1} = 1$ when calculating $\varepsilon(e_c^i)$.
\end{remark}

\begin{definition}
Let $\mathcal{S}$ be the set of all triples $(S,I_0,I_1)$, where $S$ is a closed smooth submanifold of $\R^2$, and $I_0, I_1$ are disjoint compact oriented smooth intervals embedded in $\R^2$, which intersect $S$ transversally in four points, and exactly at the endpoints of $I_0$ and $I_1$. 
\end{definition}

We want to think of $\varphi$ as a function $\varphi\colon \mathcal{S}\to \Z/2\Z$ rather than a cocycle $\varphi\in C^2(Q;\Z/2\Z)$. Each triple in $\mathcal{S}$ falls uniquely into a type A, C, X, or Y as in Figure \ref{fig:subcases}, allowing orientation preserving diffeomorphisms. So we set $\varphi(S,I_0,I_1) = 0$, if the triple is of type $A$ or $X$, and $1$ if it is of type $C$ or $Y$. Notice that $S$ can have many components, but only the ones intersecting the intervals determine the type.

\section{A substitute for the tensor product}

The scanning algorithm of \cite{MR2320156} can be roughly described as follows. Given a link diagram $\mathcal{D}$ with $n$ crossings, choose an ordering of the crossings, and form cochain complexes $C^\ast_i$ generated by the two smoothings with the saddle between them as the boundary. The first $i$ crossings form a tangle $T_i$ embedded in a compact $2$-manifold $B_i$. Start with $D^\ast_1 = C^\ast_1$, and assuming that $D^\ast_i$ exists for some $i\in \{1,\ldots, n-1\}$, we can form $D^\ast_{i+1}$ by
\begin{enumerate}
\item Form the cochain complex $E^\ast_{i+1} = D^\ast_i \otimes C^\ast_{i+1}$.
\item Deloop $E^\ast_{i+1}$, by replacing any circle component in an object with two objects where this component is removed, resulting in an isomorphic cochain complex $F_{i+1}^\ast$.
\item Perform as many Gaussian eliminations on $F_{i+1}^\ast$ as possible, and call the final result $D^\ast_{i+1}$.
\end{enumerate}
Repeating these three steps leads to a cochain complex $D_n^\ast$ chain homotopy equivalent to the Khovanov complex of the original diagram.

It is worth pointing out that if we only perform the steps (1) and (2), the complex at the end is the Khovanov complex (with a specific sign assignment coming from the tensor product), and if we only perform step (1) each time, the result is a cochain complex over an additive category such that the Khovanov complex is the result of applying a TQFT.

Delooping and Gaussian elimination does not present a difficulty in the odd situation.  Indeed, delooping in the odd case is already described in \cite[Prop.11.4]{MR3363817}, and Gaussian elimination \cite[Lm.3.2]{MR2320156} is a statement about cochain complexes in additive categories.

The tensor product represents a bit of a difficulty though, since our sign assignments do not follow a nice pattern that arises from the usual sign rules in a tensor product cochain complex. To rectify this, let us take a closer look at $E^\ast_{i+1}=D^\ast_i \otimes C^\ast_{i+1}$. The complex $D^\ast_i$ can be fairly large, but $C^\ast_{i+1}$ only has two objects, connected by a saddle morphism. We can visualize this as
\[
\begin{tikzpicture}
\node at (0,1.5) {$D^{k-1}_i\otimes C^0_{i+1}$};
\node at (3,1.5) {$D^k_i\otimes C^0_{i+1}$};
\node at (6,1.5) {$D^{k+1}_i\otimes C^0_{i+1}$};
\node at (3,0) {$D^{k-1}_i\otimes C^1_{i+1}$};
\node at (6,0) {$D^k_i\otimes C^1_{i+1}$};
\node at (9,0) {$D^{k+1}_i\otimes C^1_{i+1}$};
\node at (-1.4,1.5) {$\cdots$};
\node at (10.4,0) {$\cdots$};
\node at (0.7,0) {$\cdots$};
\draw[->] (1,1.5) -- (2.1,1.5);
\draw[->] (1,1.4) -- node [above, scale = 0.7, sloped] {$(-1)^{k-1}$}  (2,0.1);
\draw[->] (1,0) -- (2,0);
\draw[->] (3.9,1.5) -- (5,1.5);
\draw[->] (3.9,1.4) -- node [above, scale = 0.7, sloped] {$(-1)^k$} (5.1,0.1);
\draw[->] (4,0) -- (5.1,0);
\draw[->] (7,1.5) -- (8,1.5);
\draw[->] (7,1.4) -- node [above, scale = 0.7, sloped] {$(-1)^{k+1}$} (8,0.1);
\draw[->] (6.9,0) -- (8,0);
\node at (8.4,1.5) {$\cdots$};
\end{tikzpicture}
\]
The cochain complexes $D^\ast_i$ are over additive categories $\mathfrak{C}_i$ which are similar to categories $\Z\dChrqu{X_i}(B_i,\dot{B}_i)$ with the $\dot{B}_i$ the boundary of the tangle $T_i$. The $X_i$, which give a closure to the tangle $T_i$ are not needed in \cite{MR2320156} . We can think of the top horizontal line as the result of applying a functor $F\colon \mathfrak{C}_i\to \mathfrak{C}_{i+1}$ to $D^\ast_i$, and the bottom horizontal line as the result of applying a different functor $G\colon \mathfrak{C}_i\to \mathfrak{C}_{i+1}$ to $D^\ast_i$. The diagonal maps can then be thought of as coming from a natural transformation $\sigma$ between these two functors. 
In this setting the cochain complex $E^\ast_i$ is a mapping cone of the natural transformation.

The next definition formalizes this concept.

\begin{definition}\label{def:cone}
Let $\mathfrak{C}$, $\mathfrak{D}$ be additive categories, let $F, G\colon \mathfrak{C}\to\mathfrak{D}$ be functors, and let $\sigma$ be a natural transformation between $F$ and $G$. Given a cochain complex $C^\ast$ over $\mathfrak{C}$, define the {\em mapping cone} $\mathcal{C}^\sigma_{F,G}(C)^\ast$ over $\mathfrak{D}$ as follows. We set
\[
\mathcal{C}^\sigma_{F,G}(C)^n = F(C^n)\oplus G(C^{n-1})
\]
and let the coboundary be represented by the matrix
\[
\delta^n_\mathcal{C} = \begin{pmatrix} F(\delta^n) & 0 \\ \sigma_{C^n} & -G(\delta^{n-1}) \end{pmatrix}\colon F(C^n)\oplus G(C^{n-1}) \to F(C^{n+1})\oplus G(C^{n}).
\]
\end{definition}
It is easy to see that the cone behaves well with cochain maps and chain homotopies. Hence if $C^\ast$ and $D^\ast$ are chain homotopy equivalent, so are $\mathcal{C}^\sigma_{F,G}(C)^\ast$ and $\mathcal{C}^\sigma_{F,G}(D)^\ast$.

\begin{remark}
To recover the tensor product with Definition \ref{def:cone}, we should have used $G(\delta^{n-1})$ and $(-1)^n\sigma_{C^n}$ in the matrix of the coboundary. However, the given choice will fit in better with our setting.
\end{remark}

\section{The Algorithm}\label{sec:algorithm}

Let $\mathcal{D}$ be an oriented link diagram with $n$ crossings, and choose an arrow at every crossing. Now choose an ordering of the crossings $c_1,\ldots, c_n$. We want to define compact $2$-manifolds $B_1,\ldots, B_n$ embedded into the plane. First let $A_i$ be a small disc around the crossing $c_i$ for each $i=1,\ldots,n$, so small that they are all disjoint.

Let $B_1 = A_1$, and assume that we have defined $B_i$ for some $i\in \{1,\ldots,n-1\}$. Let $B_{i+1}'$ be the disjoint union of $B_i$ and $A_{i+1}$, and consider the four points of $\partial A_{i+1}\cap \mathcal{D}$. Each of these points $a_1,\ldots,a_4$ is part of an arc between the crossing $c_{i+1}$ and another crossing $c_{i_j}$ with $j\in \{1,\ldots, 4\}$. For each $j\in \{1,\ldots,4\}$ with $i_j \leq i$, we add a small thickening of this arc to $B_{i+1}'$ and call the result $B_{i+1}''$. This is a surface of genus $0$ with several discs removed. We obtain $B_{i+1}$ by re-inserting those bounded discs which do not contain crossings. See Figure \ref{fig:exampleB} for an example.

\begin{figure}[ht]
\begin{tikzpicture}[remember picture]
\draw[color=gray, ultra thick, fill = gray!20] (0.5,-1.2) to [out = 90, in = 270] (-0.2, 0) to [out = 90, in = 0] (-0.8,0.6) to [out = 180, in = 90] (-1.8, -0.8) to [out = 270, in = 180] (-0.5, -1.6) to [out = 0, in = 270] (0.5, -1.2);
\node[inner sep=0] at (0,0)
{\includegraphics[width=3cm,height=3cm]{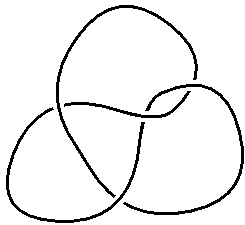}};
\draw[color=gray, ultra thick] (0.5,-1.2) to [out = 90, in = 270] (-0.2, 0) to [out = 90, in = 0] (-0.8,0.6) to [out = 180, in = 90] (-1.8, -0.8) to [out = 270, in = 180] (-0.5, -1.6) to [out = 0, in = 270] (0.5, -1.2);
\draw[color = gray, ultra thick, fill = white] (-0.9, -0.85) circle (0.3);
\draw[color=gray, ultra thick, fill = gray!20] (5.5,-1.2) to [out = 90, in = 270] (4.8, 0) to [out = 90, in = 0] (4.2,0.6) to [out = 180, in = 90] (3.2, -0.8) to [out = 270, in = 180] (4.5, -1.6) to [out = 0, in = 270] (5.5, -1.2);
\node[inner sep=0] at (5,0)
{\includegraphics[width=3cm,height=3cm]{img_fig8.png}};
\draw[color=white, ultra thick, fill = white] (5.5,-1.2) to [out = 90, in = 270] (4.8, 0) to [out = 90, in = 0] (4.2,0.6) -- (4.2,1.45) -- (6.5,1.45) -- (6.5,-1.5) -- (5.5, -1.5) -- (5.5, -1.2);
\draw[very thick] (5.5, -1.35) to [out = 0, in = 270] (6, -0.6) to [out = 90, in = 0] (5.6, -0.2) to [out = 180, in = 90] (5.2, -0.6);
\draw[very thick] (4.8, 0.09) to [out = 350, in = 180] (5.6, 0) to [out = 0, in = 270] (6, 0.6) to [out = 90, in = 0] (5, 1.2) to [out = 180, in = 80] (4.27, 0.6);
\draw[color=gray, ultra thick] (5.5,-1.2) to [out = 90, in = 270] (4.8, 0) to [out = 90, in = 0] (4.2,0.6) to [out = 180, in = 90] (3.2, -0.8) to [out = 270, in = 180] (4.5, -1.6) to [out = 0, in = 270] (5.5, -1.2);
\end{tikzpicture}
\caption{\label{fig:exampleB}The shaded region on the left is $B_2''$, and on the right we have $B_2$. The diagram on the right is $\mathcal{D}_2$, with $X_2$ outside the shaded region.}
\end{figure}

We also set $\dot{B}_i = \partial B_i\cap \mathcal{D}$ for $i=1,\ldots, n$. To form the embedded compact $1$-manifold $X_i$ for $i=1,\ldots,n$, let $\mathcal{D}_i$ be the link diagram obtained from $\mathcal{D}$ by smoothing each crossing $c_j$ with $j > i$ with the $0$-smoothing. Then set $X_i = \mathcal{D}_i\cap (\R^2-(B_i-\partial B_i))$.

With these choices we can define a functor 
\[
F_i\colon \Z\dChrqu{X_i}(B_i,\dot{B}_i)\to \Z\dChrqu{X_{i+1}}(B_{i+1},\dot{B}_{i+1})
\]
as follows. Given an object $(S,q)$ with $S$ a  smoothing embedded in $B_i$, we can combine it with the $0$-smoothing of the crossing $c_{i+1}$ and all of the arcs around $c_{i+1}$ which are contained in $B_{i+1}$. Calling this new smoothing $S_0$, we set $F_i((S,q)) = (S_0,q)$. On morphisms, we extend a chronological cobordism by a cylinder of the new part.

Notice that $S_0 - S = X_i \cap B_{i+1}$, so the functor preserves all the relations in the quotient categories.

Before we even define $G_i$, let us denote what is going to be the natural transformation $\sigma^i$ between $F_i$ and $G_i$. If $(S,q)$ is an object in $\Z\dChrqu{X_i}(B_i,\dot{B}_i)$, let $S_1$ be the smoothing obtained from $S_0$ by performing the surgery coming from the crossing $c_{i+1}$. Then $\sigma^i_{(S,q)}\in \Mor((S_0,q),(S_1,q+1))$ is the morphism coming from the obvious saddle between $S_0$ and $S_1$.

To define the functor
\[
G_i\colon \Z\dChrqu{X_i}(B_i,\dot{B}_i)\to \Z\dChrqu{X_{i+1}}(B_{i+1},\dot{B}_{i+1})
\]
we set $G_i(S,q) = (S_1,q+1)$ on objects, but on morphisms we need to be a bit more careful with signs, so that $\sigma^i$ is indeed a natural transformation between these functors.

So let $W$ be a morphism between objects $(S,q)$ and $(S',p)$ in $\Z\dChrqu{X_i}(B_i,\dot{B}_i)$ which is basic in the sense that it is one of the morphisms listed in Figure \ref{fig:basicCob}, after combining it with $X_i\times [0,1]$. Indeed, let us write $W^i$ for this combination.

If $W^i$ is a merge or a split, consider $S_0$, the smoothing used in the definition of $F_i$, and combine it to a closed $1$-manifold $S^i_0$ with $X_{i+1}$. Let $I_0$ be the oriented surgery arc from the crossing $c_{i+1}$, and $I_1$ the oriented surgery arc representing the merge or split of $W^i$. Then $(S^i_0, I_0, I_1)\in \mathcal{S}$, and we set
\begin{equation}\label{eq:signsforcob}
G_i(W) = (-1)^{1+\varphi(S^i_0,I_0,I_1)} W_1,
\end{equation}
where $W_1$ is the morphism obtained from $W$ by combining it with the cylinder of the $1$-smoothing of the crossing $c_{i+1}$.

If $W$ is a birth morphism, and we set $G_i(W) = W_1$, where $W_1$ is again the combination of $W$ with a cylinder as before.

If $W$ is a death morphism or a dotted morphism, we set
\begin{equation}\label{eq:signfordot}
G_i(W) = (-1)^\delta W_1,
\end{equation}
where $\delta = 0$, if the saddle morphism induced by the crossing $c_{i+1}$ leads to a merge from $S_0^i$ to $S_1^i$, and $\delta = 1$, if it leads to a split from $S_0^i$ to $S_1^i$.

The signs accompanying the morphisms are chosen so that $\sigma^i$ is indeed a natural transformation between $F_i$ and $G_i$ (note that in (\ref{eq:signsforcob}) $\varphi$ ensures anti-commutativity, so that the extra $(-1)$ gives commutativity), although we still need to check that $G_i$ is in fact well defined. Recall that $\Z\dChrqu{X_i}(B_i,\dot{B}_i)$ is a quotient using an equivalence relation $\sim_i$, while $\Z\dChrqu{X_{i+1}}(B_{i+1},\dot{B}_{i+1})$ is a quotient using another equivalence relation $\sim_{i+1}$. 

\begin{theorem}
The definition of $G_i$ above gives rise to a well defined functor
\[
G_i\colon \Z\dChrqu{X_i}(B_i,\dot{B}_i)\to \Z\dChrqu{X_{i+1}}(B_{i+1},\dot{B}_{i+1}),
\]
and $\sigma^i$ is a natural transformation between $F_i$ and $G_i$.
\end{theorem}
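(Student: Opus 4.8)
The plan is to prove the two assertions in the order in which they constrain one another: first that $\sigma^i$ is natural on basic morphisms, since this is exactly what forces the sign conventions in (\ref{eq:signsforcob}) and (\ref{eq:signfordot}); then that $G_i$ descends to a functor between the quotient categories, for which the only non-formal point is compatibility with the equivalence relations $\sim_i$ and $\sim_{i+1}$; and finally that $\sigma^i$ is natural for every morphism, which will be immediate from the first two steps together with functoriality of $F_i$ and $G_i$.

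For the first step I would verify $G_i(W)\circ\sigma^i_{(S,q)}=\sigma^i_{(S',p)}\circ F_i(W)$ in $\Z\dChrqu{X_{i+1}}(B_{i+1},\dot B_{i+1})$ when $W\colon(S,q)\to(S',p)$ is one of the basic cobordisms of Figure \ref{fig:basicCob} or a permutation cylinder; this suffices because a general morphism of $\Z\dChrqu{X_i}(B_i,\dot B_i)$ is a $\Z$-linear combination of composites of these, and $G_i$ will be defined on a composite as the composite of its values on the pieces. Up to sign, the two composites are one and the same chronological cobordism in $B_{i+1}\times[0,1]$ — the surface $W$ extended by a cylinder over the new arcs, stacked with the $c_{i+1}$-saddle — differing only in which of the two critical points lies at the lower level. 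When $W$ is a merge or split, the two levelings are the morphisms $W,\bar W$ of the surgery diagram formed by the $c_{i+1}$-arc $I_0$ and the arc $I_1$ of $W$, and the relation imposed on such $W,\bar W$ in $\Z\dChrqu{X_{i+1}}(B_{i+1},\dot B_{i+1})$ — dictated by the type of the closure, i.e. by $\varphi(S^i_0,I_0,I_1)$ — makes the square commute once the factor $(-1)^{1+\varphi(S^i_0,I_0,I_1)}$ of (\ref{eq:signsforcob}) is inserted: the summand $\varphi$ produces the prescribed anti-commutation and the extra $(-1)$ upgrades it to commutation. When $W$ is a birth, death, or dotting, the two levelings are compared by the relations declaring how a saddle commutes with a birth/death/dotting: a saddle commutes with a birth on the nose (so $G_i$ attaches no sign), while for a death or a dotting the correction needed is precisely $(-1)^\delta$ with $\delta$ as in (\ref{eq:signfordot}); permutation cylinders commute with $\sigma^i$ after relabelling. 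It is also immediate that $G_i$ sends objects to objects and preserves degrees, since extension by a cylinder adds no critical points and no dots.

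For the second step I would first dispose of the formal half: a chronological cobordism sliced at regular values of its height function is a composite of basic cobordisms in a way canonical up to order-preserving isotopy, so declaring $G_i$ multiplicative on such composites and additive on matrices gives a well defined functor $\Z\dChr(B_i,\dot B_i)\to\Z\dChr(B_{i+1},\dot B_{i+1})$ (identities are cylinders, which carry no critical points and hence no sign). The substantive half will be to show that $G_i$ respects the equivalences: for each generating equivalence $W\sim_i c\,W'$ with $c\in\{0,\pm1\}$ one wants $G_i(W)\sim_{i+1}c\,G_i(W')$. Here the guiding observation is that $S_0\cup X_{i+1}=S\cup X_i$ as closed $1$-manifolds — the $0$-smoothing of $c_{i+1}$ used to build $F_i$ is exactly the part of $X_i$ lying in $B_{i+1}$, as noted after the definition of $F_i$ — so the \emph{unsigned} sides of the desired equivalence satisfy in $\Z\dChrqu{X_{i+1}}(B_{i+1},\dot B_{i+1})$ the very relation $W,W'$ satisfy in $\Z\dChrqu{X_i}(B_i,\dot B_i)$, except that a relation recording a surgery type via $\varphi$ — computed for the closure with $X_i$, i.e. with $c_{i+1}$ $0$-smoothed — is replaced by the corresponding relation with $c_{i+1}$ $1$-smoothed. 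One is thereby reduced to comparing the signs $G_i$ attaches at the basic pieces of $W$ and $W'$: births contribute nothing; deaths and dottings contribute $(-1)^\delta$ with $\delta$ depending only on whether the $c_{i+1}$-saddle merges or splits in the ambient resolution, which is untouched by the local moves figuring in the neck-cutting, cancellation, dot-sliding, sphere and genus relations; and merges and splits contribute the $\varphi$-values of (\ref{eq:signsforcob}), which are precisely the $\varphi$-values of the four $c_{i+1}$-containing faces of the $3$-cube spanned by $I_0$ and the two surgeries of the relation. That the $\varphi$-value of the face in which $c_{i+1}$ is $0$-smoothed and that of the face in which it is $1$-smoothed differ by exactly the sum of those four values is the cocycle identity for $\varphi$ (\cite[Lm.2.1]{MR3071132}), the sum of $\varphi$ over the six faces of every $3$-cell being $0$ mod $2$; this is what makes the $\varphi$-dependent signs cancel against the change of relation type, and the arc-orientation relations behave because $\varphi$ changes by the same amount on the relevant faces. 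Granting the three steps, $G_i$ descends to the claimed functor, and the naturality square, checked on basic $W$ and both $F_i,G_i$ functorial, commutes for all $W$.

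The hard part will be this last step, and within it the surgery-commutation relations: one must carefully track, at each basic merge or split occurring in a decomposition of $W$ (and of $W'$), which resolution $S^i_0$ is used to evaluate $\varphi$ there — this depends on the heights of the other critical points relative to it — and then recognize the resulting product of signs on the two sides of a relation as an instance of the $3$-cell cocycle identity for $\varphi$. Everything else is a finite, if lengthy, verification of the flavour of the proof of Lemma \ref{lm:functor}, which I would organize by running through the generating equivalences of $\sim_i$ one at a time.
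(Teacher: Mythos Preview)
Your proposal is correct and follows essentially the same approach as the paper: both reduce well-definedness of $G_i$ to a case-by-case check of the generating equivalences, with the only non-trivial case being the two-surgery relation, where the discrepancy between $\varphi$ evaluated with $c_{i+1}$ $0$-smoothed versus $1$-smoothed is absorbed by the four $\varphi$-signs from (\ref{eq:signsforcob}) via the $3$-cube cocycle identity of \cite[Lm.2.1]{MR3071132}. Your organization differs slightly in that you verify the naturality square on basic morphisms first and deduce the sign conventions from it, whereas the paper takes the signs as given and observes naturality a posteriori; either order works, and your identification of the cocycle argument as the hard step is exactly right.
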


\begin{proof}
We have already seen that $\sigma^i$ is a natural transformation, provided that $G_i$ is well defined. So we need to check that if $W\sim_i W'$, then $G_i(W)\sim_{i+1}G_i(W')$.

Relations of the form (\ref{eq:firstequiv}) and (\ref{eq:neckcut}) have to take place away from the crossing $c_{i+1}$ and are therefore preserved. Similarly the relations in (\ref{eq:sevenequiv}) are easily seen to be preserved. The first relation in (\ref{eq:eightequiv}) is slightly more interesting: let $W$ represent the split in this relation. Then $G_i(W)$ can come with a minus sign, depending on whether the saddle coming from the crossing $c_{i+1}$ leads to a split or not. But then $G_i$ of the following death comes with the same factor, so the relation is preserved. The relations in (\ref{eq:ninthequiv}) are preserved with similar arguments.

Let us now consider a surgery diagram in $B_i$ involving two surgeries, and let $W$, $\bar{W}$ be the two chronological cobordisms arising. Choose $\delta, \delta_1 \in \{0,1\}$ such that 
\[
W\sim_i (-1)^\delta \bar{W} \mbox{ and } W_1\sim_{i+1} (-1)^{\delta_1} \bar{W}_1. 
\]
We can write $W = W^+\circ W^-$ with both $W^-$ and $W^+$ saddle cobordisms, and similarly we write $\bar{W} = \bar{W}^+\circ \bar{W}^-$. Furthermore, for a saddle cobordism $\tilde{W}$ let $T_{\tilde{W}}\in \mathcal{S}$ be the triple such that $G_i(\tilde{W}) = (-1)^{1+\varphi(T_{\tilde{W}})} \tilde{W}_1$. Also, $W$ and $\bar{W}$ give rise to a triple $T\in\mathcal{S}$ with $\delta = \varphi(T)$, while $W_1$ and $\bar{W}_1$ give rise to a triple $T_1$ with $\delta_1 = \varphi(T_1)$.

By \cite[Lm.2.1]{MR3071132} we get
\[
\varphi(T)+\varphi(T_1)+\varphi(T_{W^+}) + \varphi(T_{W^-}) + \varphi(T_{\bar{W}^+}) + \varphi(T_{\bar{W}^-}) = 0 \in \Z/2\Z.
\]
This means that
\[
\delta_1 = \delta + \varphi(T_{W^+}) + \varphi(T_{W^-}) + \varphi(T_{\bar{W}^+}) + \varphi(T_{\bar{W}^-}),
\]
with the last four summands needed in the definition of $G_i(W)$ and $G_i(\bar{W})$. As a result, we get $G_i(W)\sim_{i+1} G_i((-1)^\delta \bar{W})$, exactly as was required.

The cases where we slide a birth or a death past another critical point are easier. If this other critical point has index $1$ we need to be slightly careful if the saddle coming from the crossing $c_{i+1}$ changes between a merge and a split, but again the signs in $G_i$ make this work.

%

The cases in (\ref{eq:fifthequiv}), where we slide dots past births, deaths, or other dots, are similar to the cases where dots are replaced with deaths. 

Finally, if $W$ is a framed saddle and $\bar{W}$ the same saddle with opposite framing. Again it is possible that the crossing $c_{i+1}$ results in a change between $W$ being part of a merge or split, but again this is being corrected by $G_i$.
\end{proof}

We can now mimick the scanning algorithm of \cite{MR2320156}. Form the cochain complex $D^\ast_1$ generated by the two smoothings with the oriented saddle from the first crossing $c_1$ between them as coboundary. This is a cochain complex over $\Z\dChrqu{X_1}(B_1,\dot{B}_1)$. For $i\geq 1$ define
\[
D^\ast_{i+1} = \mathcal{C}^{\sigma^i}_{F_i,G_i}(D_i)^\ast,
\]
a cochain complex over $\Z\dChrqu{X_{i+1}}(B_{i+1},\dot{B}_{i+1})$. The final complex $D_n^\ast$ can be treated as a complex over $\Z\dChrq(\emptyset)$. Furthermore, since we have not done any cancellations yet, all the morphisms in $D_n^\ast$ are saddles, so we only ever used (\ref{eq:signsforcob}) when forming the $G_i(D^\ast_i)$. Also, the $(-1)$ in the coboundary of the mapping cone removes the extra $(-1)$ in (\ref{eq:signsforcob}), so $D_n^\ast$ gives rise to the odd Khovanov complex with the sign assignment from Lemma \ref{lm:signassign} upon applying the functor $F$ from Lemma \ref{lm:functor}.

As mentioned before, delooping and Gaussian elimination work as in \cite{MR2320156}, so we have a scanning algorithm for odd Khovanov homology.

\section{Practicalities and Computations}

In order to implement the algorithm from Section \ref{sec:algorithm} let us take a closer look at delooping and how it is used in practice.

\begin{lemma}
Let $S$ be a circle embedded in $\R^2$. In $\Z\dChrq(\emptyset)$ we have 
\[
(S,q) \cong (\emptyset,q-1)\oplus (\emptyset,q+1).
\]
Furthermore, the isomorphisms are given by
\[
\begin{pmatrix}
\begin{tikzpicture}
\node at (0.2,0.3) { };
\bowlud{0}{0}{0.5}{0.1}
\end{tikzpicture}
\\
\begin{tikzpicture}
\node at (0.2,0.2) { };
\bowlud{0}{0}{0.5}{0.1}
\node at (0.2,0.16) [scale = 0.5] {$\bullet$};
\end{tikzpicture}
\end{pmatrix}
\colon (S,q) \longrightarrow (\emptyset,q-1) \oplus (\emptyset,q+1)
\]
and
\[
\pushQED{\qed}
\begin{pmatrix}
\begin{tikzpicture}
\node at (0.2,0.1) { };
\bowl{0}{0}{0.5}{0.1}
\node at (0.2,-0.16) [scale = 0.5] {$\bullet$};
\end{tikzpicture}
&
\begin{tikzpicture}
\bowl{0}{0}{0.5}{0.1}
\end{tikzpicture}
\end{pmatrix}
\colon (\emptyset,q-1) \oplus (\emptyset,q+1) \longrightarrow (S,q).\qedhere
\popQED
\]
\end{lemma}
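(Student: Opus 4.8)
The plan is to run the standard delooping argument (the analogue of \cite[Lm.4.1]{MR2320156} and \cite[Prop.11.4]{MR3363817}) by checking directly that the displayed column morphism $\phi\colon (S,q)\to (\emptyset,q-1)\oplus(\emptyset,q+1)$ and the displayed row morphism $\psi\colon (\emptyset,q-1)\oplus(\emptyset,q+1)\to (S,q)$ are mutually inverse in $\Z\dChrq(\emptyset)$. Writing $\phi_1$ for the (undotted) death and $\phi_2$ for the dotted death, and $\psi_1$ for the dotted birth and $\psi_2$ for the (undotted) birth, I would first record the degree bookkeeping: an undotted birth or death has degree $-1$ and a dotted one has degree $2\cdot 1-(-1)^{2\text{ or }0}=+1$, so each of the four matrix entries indeed lies in the prescribed $\Mor$ group and all compositions below have degree $0$; in particular they can only fail to be identities, they cannot land in the wrong group.

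For $\psi\circ\phi\colon (S,q)\to(S,q)$, observe that $\psi\circ\phi=\psi_1\circ\phi_1+\psi_2\circ\phi_2=(\text{dotted birth})\circ(\text{death})+(\text{birth})\circ(\text{dotted death})$, a cobordism $S\to S$ factoring through $\emptyset$, consisting of a death disc and a birth disc, with exactly one dot present in each summand, sitting on one of the two discs in one term and on the other disc in the other term. This is precisely the right-hand side of the neck-cutting relation (\ref{eq:neckcut}) applied to the cylinder $S\times[0,1]$, so $\psi\circ\phi\sim\id_{(S,q)}$.

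For $\phi\circ\psi$, the $(i,j)$ entry is $\phi_i\circ\psi_j$, a cobordism $\emptyset\to S\to\emptyset$, i.e.\ a $2$-sphere carrying as many dots as $\psi_j$ and $\phi_i$ contribute together: the $(1,1)$ and $(2,2)$ entries are spheres with a single dot, the $(1,2)$ entry a sphere with no dot, and the $(2,1)$ entry a sphere with two dots. By (\ref{eq:firstequiv}) a once-dotted sphere is $\sim 1$, so the diagonal entries are identities; the undotted sphere is $\sim 0$ by (\ref{eq:firstequiv}), and the doubly-dotted sphere is $\sim 0$ as well. (Alternatively, both off-diagonal entries are automatically zero, being morphisms between $(\emptyset,q-1)$ and $(\emptyset,q+1)$ in one direction or the other, and $\Mor((\emptyset,a),(\emptyset,b))=0$ for $a\neq b$ as observed just before the statement.) Hence $\phi\circ\psi=\id$.

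I do not expect a genuine obstacle here; the only point requiring care is that one must argue inside the quotient category $\Z\dChrq(\emptyset)$ using the relations (\ref{eq:firstequiv}) and (\ref{eq:neckcut}) directly, rather than computing in $\Lambda^\ast$ via the functor $F$, and one should note that the framings on the basic cobordisms are irrelevant here since the only critical points occurring are the births and deaths (index $0$ and $2$), for which no framing is chosen.
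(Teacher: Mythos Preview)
Your argument is correct and is exactly the approach the paper has in mind: the paper simply remarks that Bar-Natan's proof carries over unchanged, using only the relations (\ref{eq:firstequiv}) and (\ref{eq:neckcut}), and you have spelled out precisely that computation. No changes needed.
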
 

The proof of \cite[Lm.3.1]{MR2320156} works without changes. Indeed, it follows straight from (\ref{eq:firstequiv}) and (\ref{eq:neckcut}). Furthermore, it is local, so works for any circle component in objects over $\Z\dChrqu{X}(B,\dot{B})$.

Let us check what happens to the morphisms
\[
\begin{tikzpicture}
\draw[very thick] (0,0) to [out = 45, in = 315] (0,0.6);
\draw[very thick] (0.6,0.3) circle (0.25);
\draw[->] (1.2,0.3) -- (2.5,0.3);
\draw[thick] (1.6, 0.4) to [out = 45, in = 315] (1.6, 0.8);
\draw[thick] (2, 0.6) circle (0.18);
\draw (1.68,0.6) -- (1.82,0.6);
\relsplit{2.8}{0}{1}{very thick}
\node[scale = 0.75] at (4,0) {$\{+1\}$};
\node at (5,0.3) {and};
\draw[very thick] (6,0) -- (6.1,0.1) to [out = 45, in = 135] (6.5,0.1) to [out = 315, in = 225] (6.8,0.1) to [out = 45, in = 315] (6.8, 0.5) to [out = 135, in = 45] (6.5, 0.5) to [out = 225, in = 315] (6.1, 0.5) -- (6, 0.6);
\draw[->] (7.2,0.3) -- (8.5,0.3);
\draw[very thick] (7.6,0.4) -- (7.675,0.475) to [out = 45, in = 135] (7.975, 0.475) to [out = 315, in = 225] (8.2,0.475) to [out = 45, in = 315] (8.2, 0.775) to [out = 135, in = 45] (7.975, 0.775) to [out = 225, in = 315] (7.675, 0.775) -- (7.6, 0.85);
\draw[->] (7.825, 0.7) -- (7.825, 0.55);
\relmerge{8.8}{0}{1}{very thick}
\node[scale = 0.75] at (9.95,0) {$\{+1\}$}; 
\end{tikzpicture}
\]
after delooping. The first one turns into a morphism
$
\begin{tikzpicture}[baseline=0.5ex]
\draw[thick] (0,0) to [out = 45, in = 315] (0, 0.3);
\node[scale = 0.5] at (0.3,0.05) {$\{-1\}$};
\draw[->] (0.55,0.15) -- (0.85,0.15);
\draw[thick] (1,0) to [out = 45, in = 315] (1, 0.3);
\node[scale = 0.5] at (1.3,0.05) {$\{+1\}$};
\end{tikzpicture}
$ given by the movie presentation
\[
\begin{tikzpicture}
\draw[very thick] (0,0) to [out=45, in = 315] (0,0.5);
\birth{0.5}{0.25}{0.25}{very thick}
\draw[very thick] (1,0) to [out=45, in = 315] (1,0.5);
\draw[very thick] (1.5,0.25) circle (0.25);
\node at (1.5,0) {$\bullet$};
\draw[very thick] (2,0) to [out=45, in = 315] (2,0.5);
\draw[very thick] (2.5,0.25) circle (0.25);
\draw (2.25, 0.25) -- (2.1,0.25);
\draw[dashed] (-0.15,0.7) rectangle (2.95,-0.2);
\draw[dashed] (0.875,0.7) -- (0.875,-0.2);
\draw[dashed] (1.875,0.7) -- (1.875,-0.2);
\node at (3.3,0.25) {$\sim$};
\draw[very thick] (3.8,0) to [out=45, in = 315] (3.8,0.5);
\birth{4.3}{0.25}{0.25}{very thick}
\draw[very thick] (4.8,0) to [out=45, in = 315] (4.8,0.5);
\draw[very thick] (5.3,0.25) circle (0.25);
\draw (5.05, 0.25) -- (4.9,0.25);
\draw[very thick] (5.8,0) to [out=45, in = 315] (5.8,0.5);
\node at (5.91,0.25) {$\bullet$};
\draw[dashed] (3.65,0.7) rectangle (6.05,-0.2);
\draw[dashed] (4.675,0.7) -- (4.675,-0.2);
\draw[dashed] (5.675,0.7) -- (5.675,-0.2);
\node at (6.4, 0.25) {$\sim$};
\draw[very thick] (6.9,0) to [out=45, in = 315] (6.9,0.5);
\node at (7.01,0.25) {$\bullet$};
\draw[dashed] (6.75,0.7) rectangle (7.2,-0.2);
\end{tikzpicture}
\]
and a morphism $
\begin{tikzpicture}[baseline=0.5ex]
\draw[thick] (0,0) to [out = 45, in = 315] (0, 0.3);
\node[scale = 0.5] at (0.3,0.05) {$\{+1\}$};
\draw[->] (0.55,0.15) -- (0.85,0.15);
\draw[thick] (1,0) to [out = 45, in = 315] (1, 0.3);
\node[scale = 0.5] at (1.3,0.05) {$\{+1\}$};
\end{tikzpicture}
$ given by
\[
\begin{tikzpicture}
\draw[very thick] (3.8,0) to [out=45, in = 315] (3.8,0.5);
\birth{4.3}{0.25}{0.25}{very thick}
\draw[very thick] (4.8,0) to [out=45, in = 315] (4.8,0.5);
\draw[very thick] (5.3,0.25) circle (0.25);
\draw (5.05, 0.25) -- (4.9,0.25);
\draw[dashed] (3.65,0.7) rectangle (5.7,-0.2);
\draw[dashed] (4.675,0.7) -- (4.675,-0.2);
\node at (6.25,0.25) {$\sim \, 1$};
\end{tikzpicture}
\]
The second one turns into a morphism 
$
\begin{tikzpicture}[baseline=0.5ex]
\draw[thick] (0.3,0) to [out = 45, in = 315] (0.3, 0.3);
\draw[->] (0.55,0.15) -- (0.85,0.15);
\draw[thick] (1,0) to [out = 45, in = 315] (1, 0.3);
\end{tikzpicture}
$ given by the movie presentation
\[
\begin{tikzpicture}
\draw[very thick] (0,0) -- (0.1,0.1) to [out = 45, in = 135] (0.35,0.075) to [out = 315, in = 225] (0.65,0.075) to [out = 45, in = 315] (0.65, 0.425) to [out = 135, in = 45] (0.35, 0.425) to [out = 225, in = 315] (0.1, 0.4) -- (0, 0.5);
\draw[->] (0.2,0.35) -- (0.2,0.15);
\draw[very thick] (1,0) to [out = 45, in = 315] (1,0.5);
\death{1.5}{0.25}{0.2}{very thick}
\draw[dashed] (-0.15,0.7) rectangle (1.9,-0.2);
\draw[dashed] (0.875,0.7) -- (0.875,-0.2);
\node at (2.45,0.25) {$\sim \, 1$};
\end{tikzpicture}
\]
and a morphism 
$
\begin{tikzpicture}[baseline=0.5ex]
\draw[thick] (0.3,0) to [out = 45, in = 315] (0.3, 0.3);
\draw[->] (0.55,0.15) -- (0.85,0.15);
\draw[thick] (1,0) to [out = 45, in = 315] (1, 0.3);
\node[scale = 0.5] at (1.3,0.05) {$\{+2\}$};
\end{tikzpicture}
$ given by 
\[
\begin{tikzpicture}
\draw[very thick] (0,0) -- (0.1,0.1) to [out = 45, in = 135] (0.35,0.075) to [out = 315, in = 225] (0.65,0.075) to [out = 45, in = 315] (0.65, 0.425) to [out = 135, in = 45] (0.35, 0.425) to [out = 225, in = 315] (0.1, 0.4) -- (0, 0.5);
\draw[->] (0.2,0.35) -- (0.2,0.15);
\draw[very thick] (1,0) to [out = 45, in = 315] (1,0.5);
\draw[very thick] (1.5,0.25) circle (0.25);
\node at (1.5, 0) {$\bullet$};
\draw[very thick] (2,0) to [out = 45, in = 315] (2,0.5);
\death{2.5}{0.25}{0.2}{very thick}
\draw[dashed] (-0.15,0.7) rectangle (2.9,-0.2);
\draw[dashed] (0.875,0.7) -- (0.875,-0.2);
\draw[dashed] (1.875, 0.7) -- (1.875,-0.2);
\node at (3.4,0.25) {$\sim -$};
\draw[very thick] (4,0) -- (4.1,0.1) to [out = 45, in = 135] (4.35,0.075) to [out = 315, in = 225] (4.65,0.075) to [out = 45, in = 315] (4.65, 0.425) to [out = 135, in = 45] (4.35, 0.425) to [out = 225, in = 315] (4.1, 0.4) -- (4, 0.5);
\node at (4.5,0.02) {$\bullet$};
\draw[very thick] (5,0) -- (5.1,0.1) to [out = 45, in = 135] (5.35,0.075) to [out = 315, in = 225] (5.65,0.075) to [out = 45, in = 315] (5.65, 0.425) to [out = 135, in = 45] (5.35, 0.425) to [out = 225, in = 315] (5.1, 0.4) -- (5, 0.5);
\draw[->] (5.2,0.35) -- (5.2,0.15);
\draw[very thick] (6,0) to [out = 45, in = 315] (6,0.5);
\death{6.5}{0.25}{0.2}{very thick}
\draw[dashed] (3.85,0.7) rectangle (6.9,-0.2);
\draw[dashed] (4.875,0.7) -- (4.875, -0.2);
\draw[dashed] (5.875,0.7) -- (5.875,-0.2);
\node at (7.4,0.25) {$\sim -$};
\draw[very thick] (8,0) to [out=45, in = 315] (8,0.5);
\node at (8.11,0.25) {$\bullet$};
\draw[dashed] (7.85,0.7) rectangle (8.3,-0.2);
\end{tikzpicture}
\]
In particular, the split morphism leads to different signs for the two delooped morphisms after we cancel the critical points. 

\begin{example}
Consider the $(2,N)$-torus link with $N\geq 2$. We use the standard braid diagram closure. We order the crossings by going from bottom to top, and choose a right-pointing arrow at every crossing. Then
\[
D_1^\ast =
\begin{tikzpicture}[baseline=1.3ex]
\torusv{0}{0}{0.6}{0.6}{very thick}
\draw[->] (0.8,0.3) -- (1.8,0.3);
\torusv{1.1}{0.4}{0.4}{0.4}{very thick}
\draw[->] (1.15,0.6) -- (1.45,0.6);
\torush{2}{0}{0.6}{0.6}{very thick}
\node[scale = 0.7] at (2.9,0) {$\{+1\}$};
\end{tikzpicture}
\]
and
\[
D_2^\ast =
\begin{tikzpicture}[baseline=-2ex]
\torusv{0}{0}{0.6}{0.3}{very thick}
\torusv{0}{0.3}{0.6}{0.3}{very thick}
\draw[->] (0.8,0.3) -- node[above, scale = 0.7] {$+$} (1.8,0.3);
\draw[->] (0.8,0.2) -- node[above, scale = 0.7] {$+$} (1.8, -0.6);
\draw[->] (3.2,0.2) -- node[above, scale = 0.7] {$+$} (4.2,-0.6);
\draw[->] (3.2,-0.7) -- node[above, scale = 0.7] {$+$} (4.2,-0.7);
\torush{2}{0}{0.6}{0.3}{very thick}
\torusv{2}{0.3}{0.6}{0.3}{very thick}
\torusv{2}{-1}{0.6}{0.3}{very thick}
\torush{2}{-0.7}{0.6}{0.3}{very thick}
\torush{4.4}{-1}{0.6}{0.3}{very thick}
\torush{4.4}{-0.7}{0.6}{0.3}{very thick}
\node[scale = 0.7] at (2.9,0) {$\{+1\}$};
\node[scale = 0.7] at (2.9,-1) {$\{+1\}$};
\node[scale = 0.7] at (5.3,-1) {$\{+2\}$};
\end{tikzpicture}
\]
where each morphism is a saddle, and we only show the sign of it. Notice that only the lower horizontal line is coming from the functor $G_1$, and the relevant surgery diagram is of type A.3.

After delooping this turns into
\[
\begin{tikzpicture}
\torusv{-1}{0}{0.6}{0.6}{very thick}
\torush{2}{0}{0.6}{0.6}{very thick}
\torush{2}{-1}{0.6}{0.6}{very thick}
\torush{5.4}{0}{0.6}{0.6}{very thick}
\torush{5.4}{-1}{0.6}{0.6}{very thick}
\node[scale = 0.7] at (2.9,0) {$\{+1\}$};
\node[scale = 0.7] at (2.9,-1) {$\{+1\}$};
\node[scale = 0.7] at (6.3,-1) {$\{+1\}$};
\node[scale = 0.7] at (6.3,0) {$\{+3\}$};
\draw[->] (-0.2,0.3) -- node[above, scale = 0.7] {$+$} (1.8,0.3);
\draw[->] (-0.2,0.2) -- node[above, scale = 0.7] {$+$} (1.8, -0.6);
\draw[->] (3.2,0.2) -- node[above,sloped,scale = 0.7, near end] {$-1$} (5.2,-0.6);
\draw[->] (3.2,-0.7) -- node[above,scale=0.7] {$1$} (5.2,-0.7);
\draw[-, line width=6pt, draw=white] (3.2,-0.6) -- (5.2,0.2);
\draw[->] (3.2,-0.6) -- (5.2,0.2);
\draw[->] (3.2,0.3) -- (5.2,0.3); 
\torush{4.1}{0.4}{0.2}{0.2}{thick}
\node[scale = 0.5] at (4.21,0.54) {$\bullet$};
\torush{3.4}{-0.4}{0.2}{0.2}{thick}
\node[scale = 0.7] at (3.25,-0.3) {$-$};
\node[scale = 0.5] at (3.51, -0.34) {$\bullet$};
\end{tikzpicture}
\]
We can cancel the two generators in the lower row, to get
\[
E^\ast_2 = 
\begin{tikzpicture}[baseline=1.3ex]
\torusv{-1}{0}{0.6}{0.6}{very thick}
\torush{2}{0}{0.6}{0.6}{very thick}
\torush{5.4}{0}{0.6}{0.6}{very thick}
\draw[->] (-0.2,0.3) -- (1.8,0.3);
\draw[->] (3.2,0.3) -- (5.2,0.3);
\torusv{0.6}{0.4}{0.4}{0.4}{thick}
\draw[->] (0.65,0.6) -- (0.95,0.6);
\torush{3.6}{0.4}{0.4}{0.4}{thick}
\torush{4.4}{0.4}{0.4}{0.4}{thick}
\node[scale = 0.7] at (4.2,0.6) {$-$};
\node[scale = 0.7] at (3.81,0.68) {$\bullet$};
\node[scale = 0.7] at (4.61,0.52) {$\bullet$};
\node[scale = 0.7] at (2.9,0) {$\{+1\}$};
\node[scale = 0.7] at (6.3,0) {$\{+3\}$};
\end{tikzpicture}
\]
The next mapping cone is given by
\[
\begin{tikzpicture}
\torusv{-1}{0}{0.6}{0.3}{very thick}
\torusv{-1}{0.3}{0.6}{0.3}{very thick}
\torush{2}{0}{0.6}{0.3}{very thick}
\torusv{2}{0.3}{0.6}{0.3}{very thick}
\torush{5.4}{0}{0.6}{0.3}{very thick}
\torusv{5.4}{0.3}{0.6}{0.3}{very thick}
\torusv{2}{-1}{0.6}{0.3}{very thick}
\torush{2}{-0.7}{0.6}{0.3}{very thick}
\torush{5.4}{-1}{0.6}{0.3}{very thick}
\torush{5.4}{-0.7}{0.6}{0.3}{very thick}
\torush{8.8}{-1}{0.6}{0.3}{very thick}
\torush{8.8}{-0.7}{0.6}{0.3}{very thick}
\torush{3.85}{0.4}{0.2}{0.2}{thick}
\torusv{3.85}{0.6}{0.2}{0.2}{thick}
\torush{4.35}{0.4}{0.2}{0.2}{thick}
\torusv{4.35}{0.6}{0.2}{0.2}{thick}
\node[scale = 0.5] at (3.96,0.55) {$\bullet$};
\node[scale = 0.5] at (4.46,0.46) {$\bullet$};
\node[scale = 0.6] at (4.2,0.6) {$-$};
\torush{6.8}{-0.6}{0.2}{0.2}{thick}
\torush{6.8}{-0.4}{0.2}{0.2}{thick}
\torush{7.3}{-0.6}{0.2}{0.2}{thick}
\torush{7.3}{-0.4}{0.2}{0.2}{thick}
\node[scale = 0.5] at (6.91,-0.45) {$\bullet$};
\node[scale = 0.5] at (7.41,-0.54) {$\bullet$};
\node[scale = 0.6] at (7.15,-0.4) {$-$};
\draw[->] (-0.2,0.3) -- node [above, scale = 0.7] {$+$} (1.8,0.3);
\draw[->] (-0.2,0.2) -- node [above, scale = 0.7] {$+$} (1.8,-0.6);
\draw[->] (3.2,0.3) -- (5.2,0.3);
\draw[->] (3.2,0.2) -- node [above, scale = 0.7] {$+$} (5.2,-0.6);
\draw[->] (3.2,-0.7) -- node [above, scale = 0.7] {$+$} (5.2,-0.7);
\draw[->] (6.6,0.2) -- node [above, scale = 0.7] {$+$} (8.6,-0.6);
\draw[->] (6.6,-0.7) -- (8.6,-0.7);
\node[scale = 0.7] at (2.9,0) {$\{+1\}$};
\node[scale = 0.7] at (6.3,0) {$\{+3\}$};
\node[scale = 0.7] at (2.9,-1) {$\{+1\}$};
\node[scale = 0.7] at (6.3,-1) {$\{+2\}$};
\node[scale = 0.7] at (9.7,-1) {$\{+4\}$};
\end{tikzpicture}
\]
Again we only show the sign for the saddle morphisms. In forming $G_2$, the first morphism is again determined by a type A.3, while the dotting morphisms keep their signs because the new saddle leads to a split.

Delooping results in
\[
\begin{tikzpicture}
\torusv{-1}{0}{0.6}{0.6}{very thick}
\torush{2}{0}{0.6}{0.6}{very thick}
\torush{5.4}{0}{0.6}{0.6}{very thick}
\torush{8.8}{0}{0.6}{0.6}{very thick}
\torush{2}{-1}{0.6}{0.6}{very thick}
\torush{5.4}{-1}{0.6}{0.6}{very thick}
\torush{8.8}{-1}{0.6}{0.6}{very thick}
\torush{5.4}{-2}{0.6}{0.6}{very thick}
\draw[->] (-0.2,0.3) -- node [above, scale = 0.7] {$+$} (1.8,0.3);
\draw[->] (-0.2,0.2) -- (1.8,-0.6);
\draw[->] (3.2,0.3) -- (5.2,0.3);
\draw[->] (3.2,0.2) -- (5.2,-0.6);
\draw[->] (3.2,0.1) -- (5.2,-1.6);
\draw[-, line width=6pt, draw=white] (3.2,-0.7) -- (5.2,-0.7);
\draw[->] (3.2,-0.7) -- (5.2,-0.7);
\draw[->] (3.2,-0.8) -- node [above,sloped, scale = 0.7] {$1$} (5.2,-1.7);
\draw[->] (6.6,0.3) -- (8.6,0.3);
\draw[->] (6.6,0.2) -- node [above,sloped,scale = 0.7, near end] {$-1$} (8.6,-0.6);
\draw[-, line width=6pt, draw = white] (6.6,-0.6) -- (8.6, 0.2);
\draw[->] (6.6,-0.6) -- (8.6,0.2);
\draw[->] (6.6,-0.7) -- node [above,scale = 0.7] {$1$} (8.6,-0.7);
\draw[->] (6.6,-1.7) -- (8.6,-0.8);
\torush{7.5}{0.4}{0.2}{0.2}{thick}
\node[scale = 0.5] at (7.61,0.54) {$\bullet$};
\torush{6.8}{-0.4}{0.2}{0.2}{thick}
\node[scale = 0.5] at (6.91, -0.34) {$\bullet$};
\torush{3.85}{0.4}{0.2}{0.2}{thick}
\node[scale = 0.5] at (3.96,0.54) {$\bullet$};
\torush{4.35}{0.4}{0.2}{0.2}{thick}
\node[scale = 0.5] at (4.46, 0.46) {$\bullet$};
\node[scale = 0.7] at (6.65, -0.3) {$-$};
\node[scale = 0.7] at (4.2,0.5) {$-$};
\node[scale = 0.7] at (2.9,0) {$\{+1\}$};
\node[scale = 0.7] at (6.3,0) {$\{+3\}$};
\node[scale = 0.7] at (2.9,-1) {$\{+1\}$};
\node[scale = 0.7] at (6.3,-1) {$\{+3\}$};
\node[scale = 0.7] at (9.7,0) {$\{+5\}$};
\node[scale = 0.7] at (9.7,-1) {$\{+3\}$};
\node[scale = 0.7] at (6.3,-2) {$\{+1\}$};
\end{tikzpicture}
\]
where we do not specify those morphisms that are not going to survive the cancellations. Indeed, we first cancel the lower object in homological degree $1$ with the lowest object of homological degree $2$, and then the middle object of homological degree $2$ with the lower object of homological degree $3$. 

The resulting complex is
\[
E^\ast_3 = 
\begin{tikzpicture}[baseline=1.3ex]
\torusv{-1}{0}{0.6}{0.6}{very thick}
\torush{2}{0}{0.6}{0.6}{very thick}
\torush{5.4}{0}{0.6}{0.6}{very thick}
\torush{8.8}{0}{0.6}{0.6}{very thick}
\draw[->] (-0.2,0.3) -- (1.8,0.3);
\draw[->] (3.2,0.3) -- (5.2,0.3);
\draw[->] (6.6,0.3) -- (8.6,0.3);
\torusv{0.6}{0.4}{0.4}{0.4}{thick}
\draw[->] (0.65,0.6) -- (0.95,0.6);
\torush{3.6}{0.4}{0.4}{0.4}{thick}
\torush{4.4}{0.4}{0.4}{0.4}{thick}
\node[scale = 0.7] at (4.2,0.6) {$-$};
\node[scale = 0.7] at (3.81,0.68) {$\bullet$};
\node[scale = 0.7] at (4.61,0.52) {$\bullet$};
\torush{7}{0.4}{0.4}{0.4}{thick}
\torush{7.8}{0.4}{0.4}{0.4}{thick}
\node[scale = 0.7] at (7.6,0.6) {$-$};
\node[scale = 0.7] at (7.21,0.68) {$\bullet$};
\node[scale = 0.7] at (8.01,0.52) {$\bullet$};
\node[scale = 0.7] at (2.9,0) {$\{+1\}$};
\node[scale = 0.7] at (6.3,0) {$\{+3\}$};
\node[scale = 0.7] at (9.7,0) {$\{+5\}$};
\end{tikzpicture}
\]
At this point we may recall that in even Khovanov homology we would get a $+$ sign between the dotting morphisms from homological degree $2$ to $3$. But in the odd theory we have
\[
\begin{tikzpicture}
\torush{0}{0}{0.6}{0.6}{very thick}
\torush{1}{0}{0.6}{0.6}{very thick}
\node[scale = 0.8] at (0.3,0.42) {$\bullet$};
\node[scale = 0.8] at (1.3,0.18) {$\bullet$};
\draw[dashed] (-0.2,0.8) rectangle (1.8,-0.2);
\draw[dashed] (0.8,0.8) -- (0.8,-0.2);
\node at (2.3,0.3) {$ = -$};
\torush{3}{0}{0.6}{0.6}{very thick}
\torush{4}{0}{0.6}{0.6}{very thick}
\node[scale = 0.8] at (4.3,0.42) {$\bullet$};
\node[scale = 0.8] at (3.3,0.18) {$\bullet$};
\draw[dashed] (2.8,0.8) rectangle (4.8,-0.2);
\draw[dashed] (3.8,0.8) -- (3.8,-0.2);
\end{tikzpicture}
\]
so $E^\ast_3$ is indeed a cochain complex.

Inductively, we see that for $3\leq n < N$ we get
\[
E^\ast_n = 
\begin{tikzpicture}[baseline=1.0ex]
\torusv{0}{0}{0.4}{0.4}{very thick}
\torush{2.4}{0}{0.4}{0.4}{very thick}
\torush{5.2}{0}{0.4}{0.4}{very thick}
\torush{7.6}{0}{0.4}{0.4}{very thick}
\torush{10.4}{0}{0.4}{0.4}{very thick}
\draw[->] (0.6,0.2) -- (2.2,0.2);
\draw[->] (3.4,0.2) -- (5,0.2);
\draw (6.2,0.2) -- (6.5,0.2);
\node at (6.85,0.2) {$\cdots$};
\draw[->] (7.1,0.2) -- (7.4,0.2);
\draw[->] (8.8,0.2) -- (10.2,0.2);
\torusv{1.3}{0.3}{0.2}{0.2}{thick}
\draw[->] (1.33,0.4) -- (1.47,0.4);
\torush{3.85}{0.3}{0.2}{0.2}{thick}
\node[scale = 0.5] at (3.96,0.44) {$\bullet$};
\node[scale = 0.6] at (4.2,0.4) {$-$};
\torush{4.35}{0.3}{0.2}{0.2}{thick}
\node[scale = 0.5] at (4.46, 0.36) {$\bullet$};
\torush{9.15}{0.3}{0.2}{0.2}{thick}
\node[scale = 0.5] at (9.26,0.44) {$\bullet$};
\node[scale = 0.6] at (9.5,0.4) {$-$};
\torush{9.65}{0.3}{0.2}{0.2}{thick}
\node[scale = 0.5] at (9.76, 0.36) {$\bullet$};
\node[scale = 0.6] at (3.1,0) {$\{+1\}$};
\node[scale = 0.6] at (5.9,0) {$\{+3\}$};
\node[scale = 0.6] at (8.45,0) {$\{2n-3\}$};
\node[scale = 0.6] at (11.25,0) {$\{2n-1\}$};
\end{tikzpicture}
\]
To calculate the odd Khovanov homology of the torus link $T(2,n)$, we only need to apply the functor $F_X$ from (\ref{eq:abandon}), with $X$ the usual braid closure, to $E^\ast_n$. After applying $F_X$, the first coboundary is a merge, while all other coboundary maps turn into the zero map. Notice that we still need to do a $q$-grading shift of $n=n_+$ by (\ref{eq:shifted}).

If we choose $X'$ to close the two top ends, and the two bottom ends, the complex $F_{X'}(E^\ast_n)$ is different. In fact, after a $q$-grading shift by $-2n$ and a homological shift by $-n$, this calculates the odd Khovanov homology of the unknot. This requires a little bit more justification than for even Khovanov homology. But notice that if we apply the functors $G_i$ with the closure $X'$ in mind, the first surgery always leads to a type A.2 surgery diagram, and the dotting morphisms still come with a split. So we get the exact same signs as before.
\end{example}

\begin{remark}
There is a reduced version of odd Khovanov homology $\rKh{\ast,\ast}(L)$ such that
\[
\Kho{i,j}(L)\cong \rKh{i,j-1}(L)\oplus \rKh{i,j+1}(L),
\]
see \cite[Prop.1.8]{MR3071132}. This can be calculated by choosing a basepoint $p$ on the link diagram, and then considering the subcomplex $a_p\wedge CO^\ast(L)\{+1\} \subset CO^\ast(L)\{+1\}$. Here $a_p$ refers to the generator corresponding to the component with the basepoint.

We can calculate the reduced version by running our algorithm in the same way for the first $n-1$ crossings, then place the basepoint near the last crossing, and form the subcomplex after the delooping.
\end{remark}

The algorithm has been implemented in the computer program \verb+KnotJob+ available from the author's website. Compared to a similar implementation for even Khovanov homology the odd version is slightly slower and less memory efficient, which is unsurprising given that one has to be more careful with signs. This does not seem to affect the complexity though, and we expect that if the even homology of a knot can be calculated, then the odd homology can also be calculated.

As a sample calculation, we list the reduced odd Khovanov homology of the $(8,9)$-torus knot in Figures \ref{fig:torus892} and \ref{fig:torus893}. The calculation took about half an hour on a standard PC with 4GB RAM.

\begin{figure}[ht]
\begin{center}
\includegraphics[width = 11.5cm]{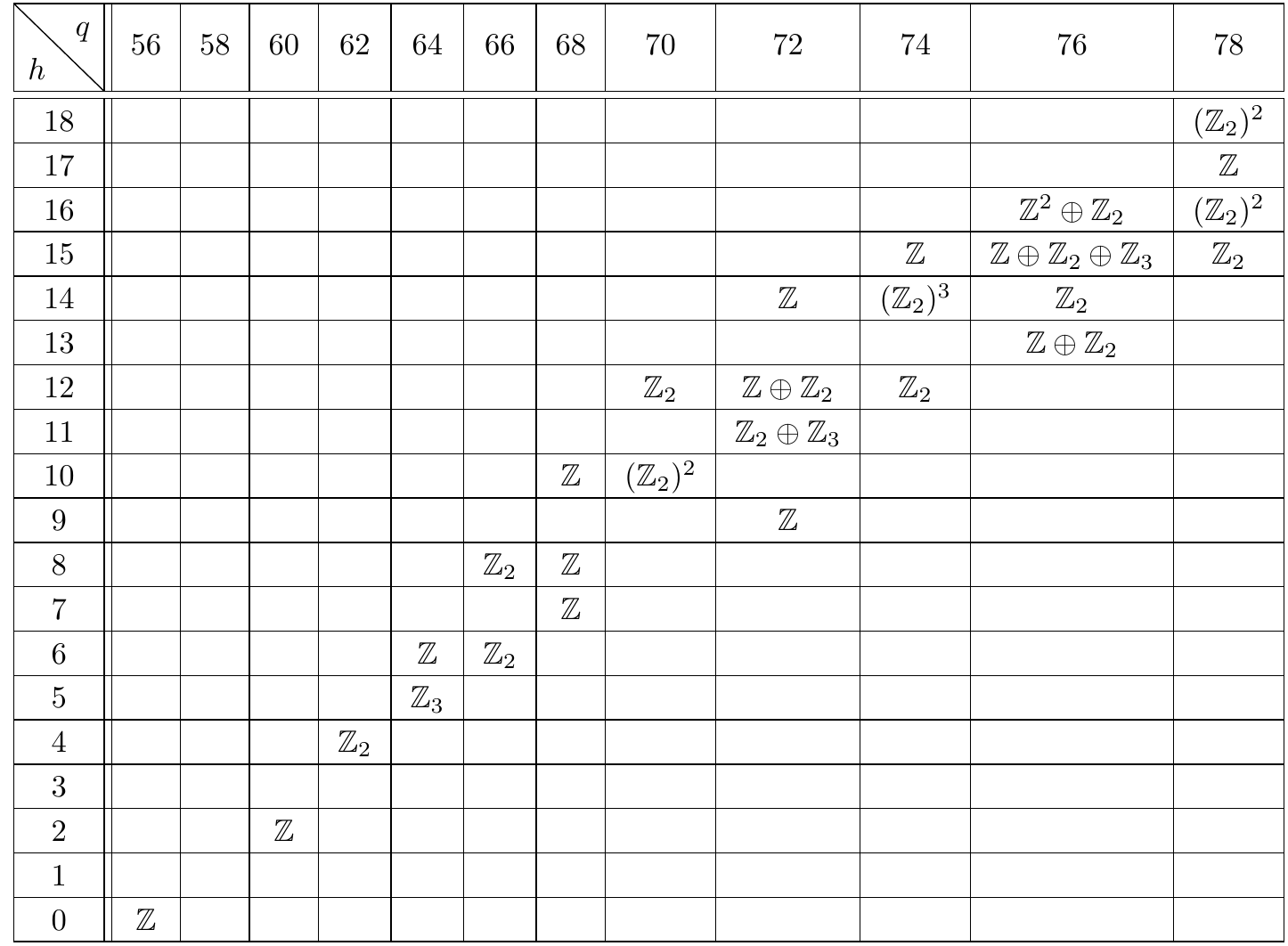}

\vspace{0.5cm}

\includegraphics[width = 11.5cm]{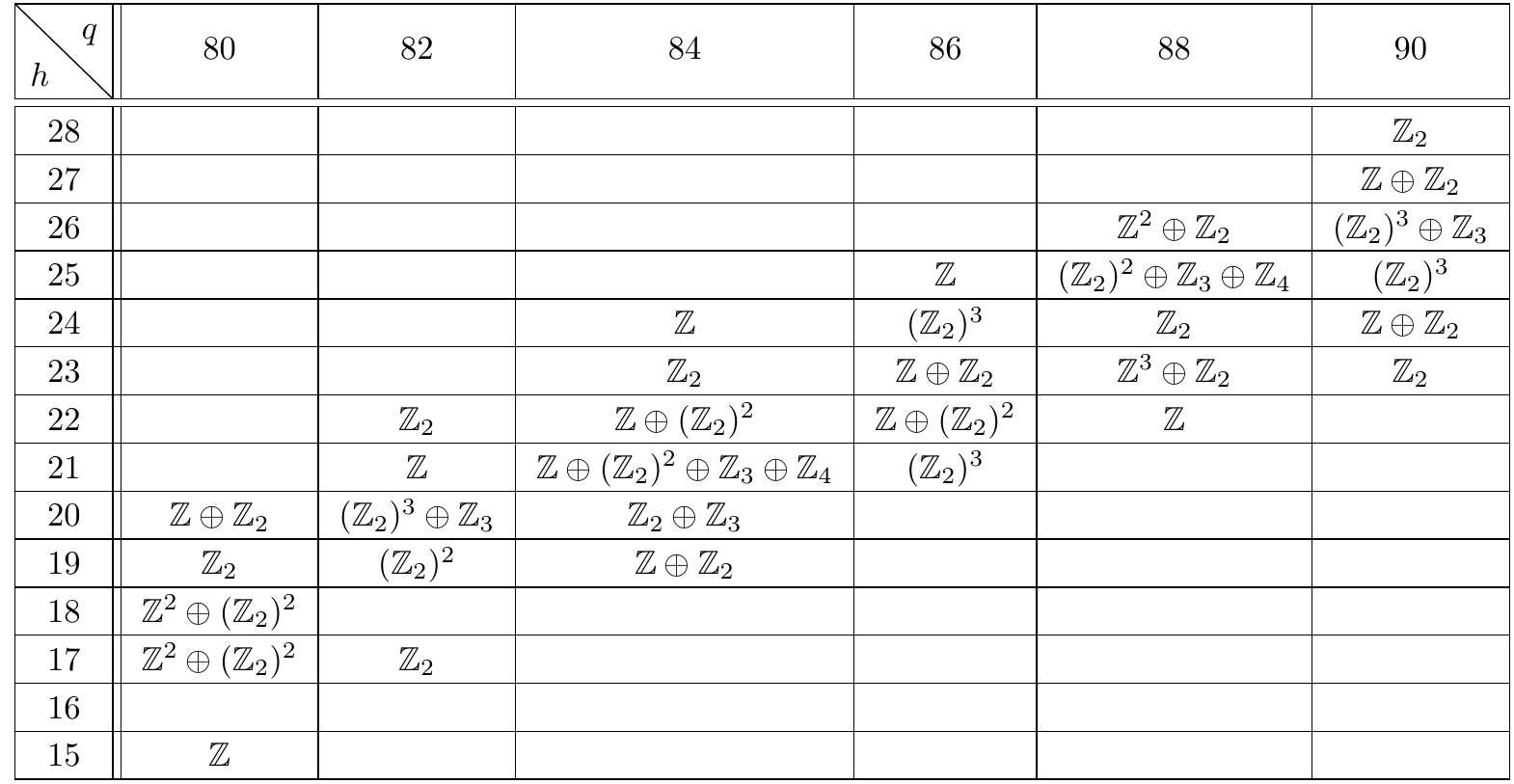}
\caption{\label{fig:torus892} The reduced odd Khovanov homology of the $(8,9)$-torus knot in quantum degrees $56$ to $90$.}
\end{center}
\end{figure}

\begin{figure}[ht]
\begin{center}
\includegraphics[width = 11.5cm]{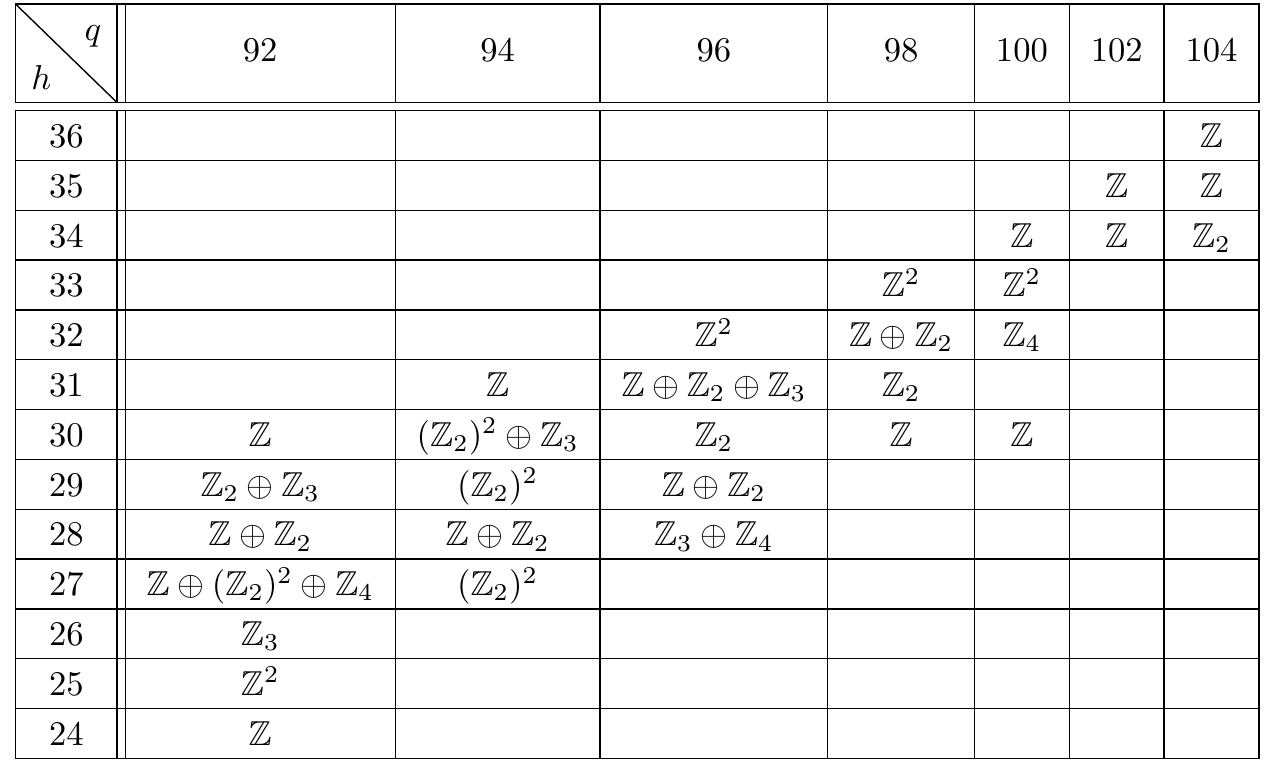}
\caption{\label{fig:torus893} The reduced odd Khovanov homology of the $(8,9)$-torus knot in quantum degrees $92$ to $104$.}
\end{center}
\end{figure}

Our program does not put a lot of effort into comparing chronological cobordisms. However, the following lemma, which describes a situation that arises frequently and can be used to simplify chronologies, has been implemented.

\begin{lemma}\label{lm:dotsforsurg}
In $\Z\dChrqu{X}(B,\dot{B})$ we have
\begin{equation}\label{eq:doublesurg}
\begin{tikzpicture}[baseline={([yshift=-.5ex]current bounding box.center)}]
\smoothingup{0}{0}{0.6}{very thick}
\smoothinglr{1}{0}{0.6}{very thick}
\draw (0.2,0.3) -- (0.4,0.3);
\draw[->] (1.3,0.2) -- (1.3, 0.4);
\draw[dashed] (-0.2,0.8) rectangle (1.8,-0.2);
\draw[dashed] (0.8,0.8) -- (0.8, -0.2);
\node at (3,0.3) {$= (-1)^{1+\varphi(T)}$};
\node[scale = 2.5] at (4.2,0.3) {$($};
\smoothingup{4.6}{0}{0.6}{very thick}
\node[scale = 0.8] at (4.8,0.3) {$\bullet$};
\draw[dashed] (4.4,0.8) rectangle (5.4,-0.2);
\node at (5.7,0.3) {$-$};
\smoothingup{6.2}{0}{0.6}{very thick}
\node[scale = 0.8] at (6.615,0.3) {$\bullet$};
\draw[dashed] (6,0.8) rectangle (7,-0.2);
\node[scale = 2.5] at (7.2,0.3) {$)$};
\end{tikzpicture}
\end{equation}
where $T\in\mathcal{S}$ represents a surgery diagram corresponding to the two surgeries on the left-hand side, after closing with $X$.
\end{lemma}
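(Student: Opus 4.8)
The plan is to imitate the proof of the earlier lemma in Section~3 (the one proved by introducing births and deaths and applying neck-cutting twice): reduce the composite on the left-hand side of (\ref{eq:doublesurg}) to a sum of two dotted identity cylinders, and then collect signs. Write $S_0$ for the $0$-smoothing and $S_1$ for the $1$-smoothing, so that the left-hand side is the composite $W=N\circ M$ of the first saddle $M\colon S_0\to S_1$ (surgery arc unoriented, hence a merge locally) with the second saddle $N\colon S_1\to S_0$ (surgery arc oriented, hence a split locally). When the two surgery arcs are drawn in $B$ they cross transversally, and consequently $W$ is, after an order-preserving isotopy, the identity cobordism on $S_0$ with a trivial tube inserted between the two critical levels over a small disc around this crossing point; in particular $W$ contains an embedded circle $\gamma$ with a product neighbourhood $\gamma\times[t_0,t_1]$ at essentially constant height strictly between the heights of $M$ and $N$. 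This is the same manipulation used in the earlier lemma, and it is the chronological incarnation of the standard derivation of the analogous even relation $\Delta\circ m=\bullet_L+\bullet_R$.

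Next I would apply the neck-cutting relation (\ref{eq:neckcut}), which remains valid in $\Z\dChrqu{X}(B,\dot{B})$, to the cylindrical neck $\gamma\times[t_0,t_1]$. This writes $W$ as a sum $W_1+W_2$, where $W_i$ is obtained from $W$ by replacing $\gamma\times[t_0,t_1]$ by a death of $\gamma$ followed by a birth of $\gamma$, with a dot placed just below the cut in $W_1$ and just above it in $W_2$. In $W_1$ the saddle $M$ and the death of $\gamma$ are now adjacent and, since $\gamma$ was split off from a single sheet, they form a cancelling pair: by (\ref{eq:eightequiv}) this part of the cobordism is equivalent to a cylinder over $S_0$, up to a sign recorded (via (\ref{eq:sevenequiv})) by whether the saddle involved is a merge or a split after the closure. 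Likewise the birth of $\gamma$ followed by $N$ is a cancelling pair, equivalent by (\ref{eq:eightequiv}) to a cylinder over $S_0$. Sliding the remaining dot along the cylinder onto one of the two strands using (\ref{eq:fifthequiv}) and (\ref{eq:ninthequiv}) then identifies $W_1$ with $\pm$ the identity on $S_0$ dotted on the left strand and $W_2$ with $\pm$ the identity on $S_0$ dotted on the right strand.

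It remains to pin down the three signs. The relative sign between the two terms is $-1$: in $W_1$ the dot sits below both cancellations while in $W_2$ it sits above them, so exactly one of the two dots is slid past an index-$1$ critical point and picks up the sign of (\ref{eq:ninthequiv}) (or (\ref{eq:fifthequiv})); this is precisely the phenomenon already recorded just before the lemma, where the two delooped pieces of a dotted split morphism come out with opposite signs. The common factor $(-1)^{1+\varphi(T)}$ comes from the interaction of the two transverse saddles: comparing the ordering of the two index-$1$ critical points of $W$ used above with the given one is governed by the reordering relations defining $\Z\dChrqu{X}(B,\dot{B})$, and by \cite[Lm.2.1]{MR3071132} the total contribution is $(-1)^{1+\varphi(T)}$, with $T\in\mathcal{S}$ the triple consisting of $S_0$ closed up with $X$ together with the two oriented surgery arcs. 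Assembling the three signs gives the right-hand side of (\ref{eq:doublesurg}).

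The main obstacle is the sign bookkeeping in the last step: with all framings and all orientations of the surgery arcs made explicit, one must verify that the cumulative sign from the neck-cut, the two applications of (\ref{eq:eightequiv}), and the dot-slides is exactly $(-1)^{1+\varphi(T)}$ (and of opposite sign on the two strands), rather than merely some $\pm 1$; it is here that the precise definitions of the equivalence relation on $\Z\dChrqu{X}(B,\dot{B})$ and the cocycle identity for $\varphi$ must be used with care. A secondary, more routine point is the topological claim that transversality of the two surgery arcs indeed forces $W$ to contain a constant-height cylindrical neck between the two critical levels, so that (\ref{eq:neckcut}) genuinely applies.
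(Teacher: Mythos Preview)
Your argument has a genuine gap at the very first step. You claim that after an order-preserving isotopy $W=N\circ M$ contains an embedded circle $\gamma$ with a product neighbourhood at essentially constant height strictly between the critical levels of $M$ and $N$. This is false: in the given chronology the intermediate $1$-manifold is $S_1$, which consists of two \emph{arcs} and contains no circle at all. The neck-cutting relation (\ref{eq:neckcut}) is a relation for a vertical cylinder over a circle, and there is simply no such cylinder available between $M$ and $N$. The ``tube'' you have in mind is the core annulus of $W$, but its core circle does not sit at constant height; it passes through both critical levels, so (\ref{eq:neckcut}) does not apply to it. Your final paragraph essentially concedes this (``a secondary, more routine point\ldots''), but it is not routine: it is the crux of the argument and it fails.

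The paper's proof fixes exactly this. It first \emph{reorders} the two saddles using the commutation relation defining $\Z\dChrqu{X}(B,\dot{B})$: writing both surgery arcs on $S_0$, the second arc can be isotoped to a small arc on the left strand, and swapping the order of the two index-$1$ critical points costs precisely the factor $(-1)^{1+\varphi(T)}$. After the swap the cobordism is ``split off a circle, then merge it back'', so now there \emph{is} a circle at the intermediate level, and neck-cutting applies cleanly to the cylinder over it. The remaining simplification is then exactly as you describe in your second paragraph: cancel birth/merge and split/death pairs via (\ref{eq:eightequiv}) and slide dots via (\ref{eq:fifthequiv}), (\ref{eq:ninthequiv}). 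Note in particular that the sign $(-1)^{1+\varphi(T)}$ comes from a single commutation of two saddles, not from the cocycle identity \cite[Lm.~2.1]{MR3071132}, which concerns a $3$-cube and is not relevant here.
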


Notice that the orientation of the first surgery is not shown, but it can have an effect on $\varphi(T)$. In fact, $T$ is either of type X, Y, or C.2. The second orientation is shown, as it determines the order in which we write the two dottings on the right-hand side.

\begin{proof}
We can write $T$ locally as
\[
\begin{tikzpicture}
\smoothingup{0}{0}{1}{very thick}
\draw (0.333,0.5) -- (0.667,0.5);
\draw[->] (0.25, 0.25) to [out = 135, in = 225] (0.25, 0.75);
\end{tikzpicture}
\]
and therefore
\[
\begin{tikzpicture}
\smoothingup{0}{0}{0.6}{very thick}
\smoothinglr{1}{0}{0.6}{very thick}
\draw (0.2,0.3) -- (0.4,0.3);
\draw[->] (1.3,0.2) -- (1.3, 0.4);
\draw[dashed] (-0.2,0.8) rectangle (1.8,-0.2);
\draw[dashed] (0.8,0.8) -- (0.8, -0.2);
\node at (3,0.3) {$= (-1)^{1+\varphi(T)}$};
\node[scale = 2.5] at (4.2,0.3) {$($};
\smoothingup{4.6}{0}{0.6}{very thick}
\draw[->] (4.73, 0.15) to [out = 135, in = 225] (4.73, 0.45);
\widesmoothingup{5.6}{0}{0.6}{very thick}
\draw[very thick] (6.2,0.3) circle (0.2);
\draw (6.4,0.3) -- (6.6,0.3);
\draw[dashed] (4.4,0.8) rectangle (7,-0.2);
\draw[dashed] (5.4,0.8) -- (5.4,-0.2);
\node[scale = 2.5] at (7.2,0.3) {$)$};
\end{tikzpicture}
\]
From the neck-cutting relation we get
\[
\begin{tikzpicture}
\smoothingup{0}{0}{0.6}{very thick}
\draw[->] (0.13, 0.15) to [out = 135, in = 225] (0.13, 0.45);
\widesmoothingup{1}{0}{0.6}{very thick}
\draw[very thick] (1.6,0.3) circle (0.2);
\draw (1.8,0.3) -- (2,0.3);
\draw[dashed] (-0.2,0.8) rectangle (2.4,-0.2);
\draw[dashed] (0.8,0.8) -- (0.8,-0.2);
\node at (2.7,0.3) {$=$};
\smoothingup{3.2}{0}{0.6}{very thick}
\draw[->] (3.33, 0.15) to [out = 135, in = 225] (3.33, 0.45);
\widesmoothingup{4.2}{0}{0.6}{very thick}
\death{4.8}{0.3}{0.2}{very thick}
\widesmoothingup{5.8}{0}{0.6}{very thick}
\birth{6.4}{0.3}{0.2}{very thick}
\widesmoothingup{7.4}{0}{0.6}{very thick}
\draw[very thick] (8,0.3) circle (0.2);
\node[scale = 0.8] at (8,0.1) {$\bullet$}; 
\widesmoothingup{9}{0}{0.6}{very thick}
\draw[very thick] (9.6,0.3) circle (0.2);
\draw (9.8,0.3) -- (10,0.3);
\draw[dashed] (3,0.8) rectangle (10.4,-0.2);
\draw[dashed] (4,0.8) -- (4, -0.2);
\draw[dashed] (5.6, 0.8) -- (5.6,-0.2);
\draw[dashed] (7.2, 0.8) -- (7.2,-0.2);
\draw[dashed] (8.8, 0.8) -- (8.8,-0.2);
\node at (3, -0.9) {$+$};
\smoothingup{3.5}{-1.2}{0.6}{very thick}
\draw[->] (3.63, -1.05) to [out = 135, in = 225] (3.63, -0.75);
\widesmoothingup{4.5}{-1.2}{0.6}{very thick}
\death{6.7}{-0.9}{0.2}{very thick}
\widesmoothingup{6.1}{-1.2}{0.6}{very thick}
\birth{8.3}{-0.9}{0.2}{very thick}
\widesmoothingup{7.7}{-1.2}{0.6}{very thick}
\draw[very thick] (5.1,-0.9) circle (0.2);
\node[scale = 0.8] at (5.1,-1.1) {$\bullet$}; 
\widesmoothingup{9.3}{-1.2}{0.6}{very thick}
\draw[very thick] (9.9,-0.9) circle (0.2);
\draw (10.1,-0.9) -- (10.3,-0.9);
\draw[dashed] (3.3,-0.4) rectangle (10.7,-1.4);
\draw[dashed] (4.3,-0.4) -- (4.3, -1.4);
\draw[dashed] (5.9, -0.4) -- (5.9,-1.4);
\draw[dashed] (7.5, -0.4) -- (7.5,-1.4);
\draw[dashed] (9.1, -0.4) -- (9.1,-1.4);
\node at (2.7, -2.1) {$=$};
\node at (3.05,-2.1) {$-$};
\smoothingup{3.5}{-2.4}{0.6}{very thick}
\node[scale = 0.8] at (3.915,-2.1) {$\bullet$};
\draw[dashed] (3.3,-1.6) rectangle (4.3,-2.6);
\node at (4.6,-2.1) {$+$};
\smoothingup{5.1}{-2.4}{0.6}{very thick}
\node[scale = 0.8] at (5.3,-2.1) {$\bullet$};
\draw[dashed] (4.9,-1.6) rectangle (5.9,-2.6);
\end{tikzpicture}
\]
which implies the result.
\end{proof}

\section{Three strand torus links}

Let $B\subset \R^2$ be a disc and $\dot{B}\subset \partial B$ consist of $6$ points, three in the lower half and three in the upper half of $\partial B$. Also let $X\subset \R^2-(B-\partial B)$ be the braid closure of the points in $\dot{B}$. Consider the following infinitely generated cochain complex $C^\ast$ over $\Z\dChrqu{X}(B,\dot{B})$, which continues with period $8$.
\[
\begin{tikzpicture}
\smomega{0}{0}{0.8}{0.8}{very thick}
\draw[->] (1,0.4) -- node [above] {$A$} (1.7, 0.4);
\node[scale = 2] at (1.9, 0.4) {$($};
\smgamma{2.2}{0}{0.8}{0.8}{very thick}
\node at (3.3,0.4) {$\oplus$};
\smdelta{3.6}{0}{0.8}{0.8}{very thick}
\node[scale = 2] at (4.7,0.4) {$)$};
\node[scale = 0.7] at (4.9,0.1) {$\{1\}$};
\draw[->] (5, 0.4) -- node [above] {$B_1$} (5.7, 0.4);
\node[scale = 2] at (5.9, 0.4) {$($};
\smbeta{6.2}{0}{0.8}{0.8}{very thick}
\node at (7.3,0.4) {$\oplus$};
\smalpha{7.6}{0}{0.8}{0.8}{very thick}
\node[scale = 2] at (8.7,0.4) {$)$};
\node[scale = 0.7] at (8.9,0.1) {$\{2\}$};
\draw[->] (8.9,0.4) -- (9.1, 0.4) to [out = 0, in = 0] (9.1, -0.2) -- (-1.7, -0.2) to [out = 180, in = 180] (-1.7, -0.8) -- (-1.6, -0.8);
\node at (9.6,0.1) {$C_1$};
\node[scale = 2] at (-1.4, -0.8) {$($};
\smbeta{-1.1}{-1.2}{0.8}{0.8}{very thick}
\node at (0, -0.8) {$\oplus$};
\smalpha{0.3}{-1.2}{0.8}{0.8}{very thick}
\node[scale = 2] at (1.4,-0.8) {$)$};
\node[scale = 0.7] at (1.6, -1.1) {$\{4\}$};
\draw[->] (1.7, -0.8) -- node [above] {$D_1$} (2.4, -0.8);
\node[scale = 2] at (2.6, -0.8) {$($};
\smgamma{2.9}{-1.2}{0.8}{0.8}{very thick}
\node at (4,-0.8) {$\oplus$};
\smdelta{4.3}{-1.2}{0.8}{0.8}{very thick}
\node[scale = 2] at (5.4,-0.8) {$)$};
\node[scale = 0.7] at (5.6,-1.1) {$\{5\}$};
\draw[->] (5.7,-0.8) -- node [above] {$E_1$} (6.4, -0.8);
\node[scale = 2] at (6.6, -0.8) {$($};
\smgamma{6.9}{-1.2}{0.8}{0.8}{very thick}
\node at (8,-0.8) {$\oplus$};
\smdelta{8.3}{-1.2}{0.8}{0.8}{very thick}
\node[scale = 2] at (9.4,-0.8) {$)$};
\node[scale = 0.7] at (9.6,-1.1) {$\{7\}$};
\draw[->] (9.6, -0.8) -- (9.8, -0.8) to [out = 0, in = 0] (9.8, -1.4) -- (-1.7, -1.4) to [out = 180, in = 180] (-1.7, -2) -- (-1.6, -2);
\node at (10.3, -1.1) {$B_2$};
\node[scale = 2] at (-1.4, -2) {$($};
\smbeta{-1.1}{-2.4}{0.8}{0.8}{very thick}
\node at (0, -2) {$\oplus$};
\smalpha{0.3}{-2.4}{0.8}{0.8}{very thick}
\node[scale = 2] at (1.4,-2) {$)$};
\node[scale = 0.7] at (1.6, -2.3) {$\{8\}$};
\draw[->] (1.7, -2) -- node [above] {$C_2$} (2.4, -2);
\node[scale = 2] at (2.6, -2) {$($};
\smbeta{2.9}{-2.4}{0.8}{0.8}{very thick}
\node at (4,-2) {$\oplus$};
\smalpha{4.3}{-2.4}{0.8}{0.8}{very thick}
\node[scale = 2] at (5.4,-2) {$)$};
\node[scale = 0.7] at (5.65,-2.3) {$\{10\}$};
\draw[->] (5.7,-2) -- node [above] {$D_2$} (6.4, -2);
\node[scale = 2] at (6.6, -2) {$($};
\smgamma{6.9}{-2.4}{0.8}{0.8}{very thick}
\node at (8,-2) {$\oplus$};
\smdelta{8.3}{-2.4}{0.8}{0.8}{very thick}
\node[scale = 2] at (9.4,-2) {$)$};
\node[scale = 0.7] at (9.65,-2.3) {$\{11\}$};
\draw[->] (9.6, -2) -- (9.8, -2) to [out = 0, in = 0] (9.8, -2.6) -- (-1.7, -2.6) to [out = 180, in = 180] (-1.7, -3.2) -- (-1.6, -3.2);
\node at (10.3, -2.3) {$E_2$};
\node[scale = 2] at (-1.4, -3.2) {$($};
\smgamma{-1.1}{-3.6}{0.8}{0.8}{very thick}
\node at (0, -3.2) {$\oplus$};
\smdelta{0.3}{-3.6}{0.8}{0.8}{very thick}
\node[scale = 2] at (1.4,-3.2) {$)$};
\node[scale = 0.7] at (1.65, -3.5) {$\{13\}$};
\draw[->] (1.7, -3.2) -- node [above] {$B_1$} (2.4, -3.2);
\node[scale = 2] at (2.6, -3.2) {$($};
\smbeta{2.9}{-3.6}{0.8}{0.8}{very thick}
\node at (4,-3.2) {$\oplus$};
\smalpha{4.3}{-3.6}{0.8}{0.8}{very thick}
\node[scale = 2] at (5.4,-3.2) {$)$};
\node[scale = 0.7] at (5.65,-3.5) {$\{14\}$};
\draw[->] (5.7,-3.2) -- node [above] {$C_1$} (6.4, -3.2);
\node at (6.8, -3.2) {$\cdots$};
\end{tikzpicture}
\]
where
\[
A = \left( \, \, \, 
\begin{tikzpicture}[baseline=4ex]
\smomega{0.8}{0.8}{0.6}{0.6}{very thick}
\draw[->] (0.8, 1.1) -- (1.1, 1.1);
\smomega{0.8}{0}{0.6}{0.6}{very thick}
\draw[->] (1.1, 0.3) -- (1.4, 0.3);
\end{tikzpicture}
\, \, \, \right), \hspace{0.3cm}
B_1 = \left( 
\begin{tikzpicture}[baseline=4ex]
\smgamma{1}{0.8}{0.6}{0.6}{very thick}
\draw[->] (1.3, 1.3) -- (1.6, 1.3);
\node at (0.8, 0.3) {$-$};
\smgamma{1}{0}{0.6}{0.6}{very thick}
\draw[->] (1.3, 0.1) -- (1.6, 0.1);
\node at (2.2, 1.1) {$-$};
\smdelta{2.4}{0.8}{0.6}{0.6}{very thick}
\draw[->] (2.4,0.9) -- (2.7, 0.9);
\smdelta{2.4}{0}{0.6}{0.6}{very thick}
\draw[->] (2.4, 0.5) -- (2.7, 0.5);
\end{tikzpicture}
\, \, \, \right), \hspace{0.3cm}
B_2 = \left( 
\begin{tikzpicture}[baseline=4ex]
\smgamma{1}{0.8}{0.6}{0.6}{very thick}
\draw[->] (1.3, 1.3) -- (1.6, 1.3);
\node at (0.8, 0.3) {$-$};
\smgamma{1}{0}{0.6}{0.6}{very thick}
\draw[->] (1.3, 0.1) -- (1.6, 0.1);
\smdelta{2.4}{0.8}{0.6}{0.6}{very thick}
\draw[->] (2.4, 0.9) -- (2.7, 0.9);
\smdelta{2.4}{0}{0.6}{0.6}{very thick}
\draw[->] (2.4, 0.5) -- (2.7, 0.5);
\end{tikzpicture}
\, \, \, \right),
\]
\[
C_1 = \left( \, \, \, 
\begin{tikzpicture}[baseline=4ex]
\dottedcup{1}{0.8}{0.6}{0.6}{very thick}
\node at (1.6,1.1) {$-2$};
\dottedbslash{1.9}{0.8}{0.3}{0.6}{very thick}
\node at (2.4,1.1) {$+$};
\dottedcap{2.6}{0.8}{0.6}{0.6}{very thick}
\smbeta{1.2}{0}{0.6}{0.6}{very thick}
\draw[->] (1.2,0.5) -- (1.5, 0.5);
\draw[dashed] (2, 0.7) -- (2, -0.1);
\smgamma{2.2}{0}{0.6}{0.6}{very thick}
\draw[->] (2.5, 0.1) -- (2.8, 0.1);
\smalpha{3.6}{0.8}{0.6}{0.6}{very thick}
\draw[->] (3.9,1.3) -- (4.2, 1.3);
\draw[dashed] (4.4,1.5) -- (4.4, 0.7);
\smdelta{4.6}{0.8}{0.6}{0.6}{very thick}
\draw[->] (4.6, 0.9) -- (4.9, 0.9);
\dottedcup{3.9}{0}{0.6}{0.6}{very thick}
\node at (4.4, 0.3) {$-$};
\dottedcap{4.6}{0}{0.6}{0.6}{very thick}
\end{tikzpicture}
\, \, \, \right), \, \,
C_2 = \left( \, \, \, 
\begin{tikzpicture}[baseline=4ex]
\dottedcup{1.5}{0.8}{0.6}{0.6}{very thick}
\node at (2,1.1) {$-$};
\dottedcap{2.2}{0.8}{0.6}{0.6}{very thick}
\smbeta{1.2}{0}{0.6}{0.6}{very thick}
\draw[->] (1.2,0.5) -- (1.5, 0.5);
\draw[dashed] (2, 0.7) -- (2, -0.1);
\smgamma{2.2}{0}{0.6}{0.6}{very thick}
\draw[->] (2.5, 0.1) -- (2.8, 0.1);
\node at (3.4, 1.1) {$-$};
\smalpha{3.6}{0.8}{0.6}{0.6}{very thick}
\draw[->] (3.9,1.3) -- (4.2, 1.3);
\draw[dashed] (4.4,1.5) -- (4.4, 0.7);
\smdelta{4.6}{0.8}{0.6}{0.6}{very thick}
\draw[->] (4.6, 0.9) -- (4.9, 0.9);
\dottedcup{3.4}{0}{0.6}{0.6}{very thick}
\node at (4,0.3) {$-2$};
\dottedslash{4.3}{0}{0.3}{0.6}{very thick}
\node at (4.8,0.3) {$+$};
\dottedcap{5}{0}{0.6}{0.6}{very thick}
\end{tikzpicture}
\, \, \, \right),
\]
\[
D_1 = \left( \, \, \, 
\begin{tikzpicture}[baseline=4ex]
\smbeta{1}{0.8}{0.6}{0.6}{very thick}
\draw[->] (1, 1.3) -- (1.3, 1.3);
\smbeta{1}{0}{0.6}{0.6}{very thick}
\draw[->] (1.3, 0.1) -- (1.6, 0.1);
\node at (2.2, 1.1) {$-$};
\smalpha{2.4}{0.8}{0.6}{0.6}{very thick}
\draw[->] (2.4,0.9) -- (2.7, 0.9);
\smalpha{2.4}{0}{0.6}{0.6}{very thick}
\draw[->] (2.7, 0.5) -- (3, 0.5);
\end{tikzpicture}
\, \, \, \right), \hspace{0.3cm}
D_2 = \left( \, \, \, 
\begin{tikzpicture}[baseline=4ex]
\smbeta{1}{0.8}{0.6}{0.6}{very thick}
\draw[->] (1, 1.3) -- (1.3, 1.3);
\smbeta{1}{0}{0.6}{0.6}{very thick}
\draw[->] (1.3, 0.1) -- (1.6, 0.1);
\smalpha{2.4}{0.8}{0.6}{0.6}{very thick}
\draw[->] (2.4, 0.9) -- (2.7, 0.9);
\smalpha{2.4}{0}{0.6}{0.6}{very thick}
\draw[->] (2.7, 0.5) -- (3, 0.5);
\end{tikzpicture}
\, \, \, \right),
\]
and
\[
E_1 = \left( \, \, \, 
\begin{tikzpicture}[baseline=4ex]
\dottedcup{1.2}{0.8}{0.6}{0.6}{very thick}
\node at (1.8,1.1) {$-2$};
\dottedline{2.15}{0.8}{0.6}{very thick}
\node at (2.4,1.1) {$+$};
\dottedcap{2.55}{0.8}{0.6}{0.6}{very thick}
\smgamma{1.2}{0}{0.6}{0.6}{very thick}
\draw[->] (1.5,0.5) -- (1.8, 0.5);
\draw[dashed] (2, 0.7) -- (2, -0.1);
\smbeta{2.2}{0}{0.6}{0.6}{very thick}
\draw[->] (2.5, 0.1) -- (2.8, 0.1);
\smdelta{3.6}{0.8}{0.6}{0.6}{very thick}
\draw[->] (3.6,1.3) -- (3.9, 1.3);
\draw[dashed] (4.4,1.5) -- (4.4, 0.7);
\smalpha{4.6}{0.8}{0.6}{0.6}{very thick}
\draw[->] (4.6, 0.9) -- (4.9, 0.9);
\dottedcup{3.6}{0}{0.6}{0.6}{very thick}
\node at (4.2, 0.3) {$-2$};
\dottedline{4.55}{0}{0.6}{very thick}
\node at (4.8,0.3) {$+$};
\dottedcap{4.95}{0}{0.6}{0.6}{very thick}
\end{tikzpicture}
\, \, \, \right), \, \,
E_2 = \left( \, \, \, 
\begin{tikzpicture}[baseline=4ex]
\dottedcup{1.5}{0.8}{0.6}{0.6}{very thick}
\node at (2,1.1) {$-$};
\dottedcap{2.2}{0.8}{0.6}{0.6}{very thick}
\smgamma{1.2}{0}{0.6}{0.6}{very thick}
\draw[->] (1.5,0.5) -- (1.8, 0.5);
\draw[dashed] (2, 0.7) -- (2, -0.1);
\smbeta{2.2}{0}{0.6}{0.6}{very thick}
\draw[->] (2.5, 0.1) -- (2.8, 0.1);
\node at (3.4, 1.1) {$-$};
\smdelta{3.6}{0.8}{0.6}{0.6}{very thick}
\draw[->] (3.6,1.3) -- (3.9, 1.3);
\draw[dashed] (4.4,1.5) -- (4.4, 0.7);
\smalpha{4.6}{0.8}{0.6}{0.6}{very thick}
\draw[->] (4.6, 0.9) -- (4.9, 0.9);
\dottedcup{3.9}{0}{0.6}{0.6}{very thick}
\node at (4.4,0.3) {$-$};
\dottedcap{4.6}{0}{0.6}{0.6}{very thick}
\end{tikzpicture}
\, \, \, \right).
\]
\begin{remark}
One should compare this complex with the categorification of the third Jones-Wenzl projector given in \cite[\S 4.4]{MR2901969}. A striking difference is that $C^\ast$ has period $8$, while the complex in \cite{MR2901969} has period $4$. The subtle sign differences described in the coboundary above do have an effect, as the homology also has period $8$, and not, as in the even Khovanov homology situation of \cite{MR2901969}, period $4$.
\end{remark}

We claim that the odd Khovanov homology of the $(3,n)$-torus link $T(3,n)$ can be calculated by forming an appropriate quotient complex of $C^\ast$ and applying the functors $F_X$ from (\ref{eq:abandon}) and $F$ from Lemma \ref{lm:functor}. For organisatorial purposes, let us write $C_k^\ast$ for the subcomplex of $C^\ast$ containing all generators of homological degree $>k$, and $C_{k-\frac{1}{2}}^\ast$ for the subcomplex of $C^\ast$ containing all generators of homological degree $>k$, 
and also the second generator of homological degree $k$ (in the order suggested by the direct sum symbol in the definition of $C^\ast$). Here we assume $k\geq 1$ and that $C^\ast$ starts in degree $0$. We then write
\[
KO^\ast_k = F(F_X(C^\ast / C^\ast_k)) \, \, \, \mbox{ and } \, \, \, KO^\ast_{k-\frac{1}{2}} = F(F_X(C^\ast / C^\ast_{k-\frac{1}{2}}))
\]
\begin{theorem}\label{thm:3strand}
Let $n\geq 0$. Then
\begin{align*}
CO^\ast(T(3,6n+1))  & \simeq KO^\ast_{2+8n-\frac{1}{2}}\{12n+2\} \\
CO^\ast(T(3,6n+2))  & \simeq KO^\ast_{3+8n-\frac{1}{2}}\{12n+4\} \\
CO^\ast(T(3,6n+3))  & \simeq KO^\ast_{5+8n}\{12n+6\} \\
CO^\ast(T(3,6n+4))  & \simeq KO^\ast_{6+8n-\frac{1}{2}}\{12n+8\} \\
CO^\ast(T(3,6n+5))  & \simeq KO^\ast_{7+8n-\frac{1}{2}}\{12n+10\} \\
CO^\ast(T(3,6n+6))  & \simeq KO^\ast_{9+8n}\{12(n+1)\}.
\end{align*}
Here $\simeq$ stands for chain homotopy equivalent as $q$-graded cochain complexes.
\end{theorem}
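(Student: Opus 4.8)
The plan is to compute $[(\sigma_1\sigma_2)^n]$ with the scanning algorithm of Section~\ref{sec:algorithm} and to identify its simplified intermediate complexes with truncations of $C^\ast$. Realize $T(3,n)$ as the closure of the positive $3$‑braid $(\sigma_1\sigma_2)^n$, drawn as a tangle diagram $\mathcal D$ in the disc $B$ with the braid closure $X$ of the statement, give all $2n$ crossings right‑pointing arrows, and order them from bottom to top. Running the algorithm, at each stage one forms a mapping cone $\mathcal C^{\sigma^i}_{F_i,G_i}$, deloops the new circles, and performs every available Gaussian elimination \cite[Lm.3.2]{MR2320156}; by Section~\ref{sec:algorithm} the resulting complex over $\Z\dChrqu{X}(B,\dot B)$, after applying $F_X$ from \eqref{eq:abandon}, then $F$ from Lemma~\ref{lm:functor}, and the shift \eqref{eq:shifted}, computes $\Kho{\ast,\ast}(T(3,n))$. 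Since $n_-=0$ for a positive braid, the homological‑degree–dependent part $\{i\}$ of \eqref{eq:shifted} is exactly the internal $q$‑grading carried by the objects of $C^\ast$ (the $\{k\}$ labels), and the uniform part is $\{n_+\}=\{2n\}$, which matches the shifts $\{12n+2\},\ldots,\{12n+12\}$ in the six cases. So the whole statement reduces to identifying the simplified intermediate complex, after scanning past a given number of full twists, with an explicit quotient of $C^\ast$ up to this $q$‑shift, which I would prove by induction on the number of twists.

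The first observation is local: since $B$ is a disc with $6$ marked points and delooping removes every closed circle, the only surviving objects are the five crossingless matchings of the six points, i.e.\ the Temperley–Lieb diagrams appearing in the definition of $C^\ast$; the morphisms that can occur are saddles, dotted saddles, and the dotted through‑strand morphisms computed in the delooping discussion preceding Lemma~\ref{lm:dotsforsurg} and in Lemma~\ref{lm:dotsforsurg} itself. This forces every intermediate complex to have exactly the combinatorial shape of a segment of $C^\ast$.

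The inductive step is the heart of the argument and I would carry it out exactly as in the worked $T(2,N)$ example. Adding one twist $\sigma_1\sigma_2$ means forming two successive mapping cones over the current complex, delooping, and cancelling; each cone glues in a $q$‑ and homologically shifted copy of the complex via $G_i$, delooping splits the newly created circle, and Gaussian elimination removes the invertible components, leaving dotted morphisms whose signs come from \eqref{eq:signsforcob}, \eqref{eq:signfordot} and \eqref{eq:doublesurg}. The task is to check that the surgery diagrams that arise are of types $X$, $Y$ or $C.2$ (as flagged after Lemma~\ref{lm:dotsforsurg}), so that $\varphi(T)$ takes exactly the values producing the minus signs recorded in $A,B_1,B_2,C_1,C_2,D_1,D_2,E_1,E_2$. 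This is precisely where the odd theory departs from \cite[\S4.4]{MR2901969}: the extra $-1$'s do not cancel in pairs, so the local picture only repeats after eight homological steps (six twists) rather than four, matching the period‑$8$ phenomenon noted after the definition of $C^\ast$. Carrying the induction through a full block of six twists then shows that six twists advance the truncation level of $C^\ast$ by $8$ and the accumulated $q$‑shift by $12$, giving the six congruence classes.

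It remains to read off where the scan terminates in each case. For $n\equiv 0\pmod 3$ the last twist ends a complete round of cancellations and the output is an honest quotient $C^\ast/C^\ast_k$, whence the cases $T(3,6n+3)$ and $T(3,6n+6)$. For $n\not\equiv 0\pmod 3$ one Gaussian elimination at the top is not yet available, so exactly one additional generator of the top homological degree survives — this is precisely $C^\ast/C^\ast_{k-\frac12}$ — giving the remaining four cases. A minor point, exactly as in the $T(2,N)$ example, is that the auxiliary surfaces $B_i$ and closures $X_i$ only stabilize to $B$ and $X$ once enough arcs have been closed off, so the first few scanning steps take place in slightly different categories; this affects only finitely many initial terms and not the tail of the complex. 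Applying $F_X$ and $F$ and the uniform $q$‑shift then yields the stated chain homotopy equivalences of $q$‑graded cochain complexes. The main obstacle I expect is the sign bookkeeping: confirming at every stage that the signs produced by \eqref{eq:signsforcob}, \eqref{eq:signfordot} and \eqref{eq:doublesurg} agree on the nose with those defining $C^\ast$, in particular that they genuinely collapse with period $8$ and not period $4$, while keeping the truncation indices and $q$‑shifts correctly synchronized across the six residue classes.
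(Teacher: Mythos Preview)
Your approach is essentially the paper's: run the scanning algorithm on $(\sigma_1\sigma_2)^n$, form mapping cones crossing by crossing, deloop and Gauss eliminate, and identify the result with the appropriate truncation of $C^\ast$ by induction, using Lemma~\ref{lm:dotsforsurg} to produce the dotted diagonals in the $C$ and $E$ matrices. The paper packages the induction slightly more cleanly via the self-similarity $C^\ast\simeq\mathcal{C}^{\sigma_i}_{F_i,G_i}(C)^\ast$ for $i=1,2$, and records the specific cancellation mechanism you would need (applying $G_i$ creates a circle in exactly one generator per homological degree $\geq 1$; after delooping, all $G_i$-generators cancel from the bottom up, leaving the $F_i$-generators together with one or two survivors at the top), which is what makes the sign bookkeeping you flag actually manageable.
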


The proof is a lengthy induction, and we will only give the basic idea. Essentially we simply run our algorithm on the standard braid diagram for a $3$-strand torus link. The two braid letters give the functors $F_1, G_1\colon \Z\dChrqu{X}(B,\dot{B}) \to \Z\dChrqu{X}(B,\dot{B})$ and $F_2, G_2\colon \Z\dChrqu{X}(B,\dot{B}) \to \Z\dChrqu{X}(B,\dot{B})$, respectively.

To get from the statement for $T(3,6n+i)$ to the statement for $T(3, 6n+i+1)$, one has to form the mapping cone for the natural transformation between $F_1$ and $G_1$ of the appropriate quotient of $C^\ast$, deloop and Gauss eliminate, then repeat with the mapping cone for the natural transformation between $F_2$ and $G_2$.

It is worth pointing out that the smoothings do not change when the functors $F_1$ or $F_2$ are applied. However, applying the functors $G_1$ or $G_2$ on generators will create a circle in the smoothings of exactly one generator in every homological degree $h\geq 1$. After delooping we can cancel the new generators that arose from $G_1$ or $G_2$, by starting in homological degree $1$ and working our way up.

The generators that arose from $F_1$ or $F_2$ remain, while all generators that arose from $G_1$ or $G_2$ get cancelled, except for one or two at the highest homological degree. To get the dottings on the diagonals of the $C$ and $E$ matrices we also need to apply Lemma \ref{lm:dotsforsurg} after some cancellations.

In fact, it is easy to see this way that $C^\ast\simeq \mathcal{C}^{\sigma_i}_{F_i,G_i}(C)^\ast$ for $i=1,2$, where $\sigma_i$ is the natural transformation between $F_i$ and $G_i$. We used our computer program to determine $C^\ast$, but it is instructive to check at least one or two of the inductive steps by hand to see how the complex arises. The same technique also works in even Khovanov homology, and there one has noticeably less work because of the smaller period.

It is straightforward to calculate the odd Khovanov homology of any $3$-strand torus link from Theorem \ref{thm:3strand}, but even with periodicity there are still quite a few cases. As an Example, we list the homology of $T(3,14)$ in Figure \ref{fig:torus314}. The periodic part is nicely visible, and the top homology behaves the same way for all torus knots $T(3,6n+2)$, suitably shifted.

\begin{figure}[ht]
\begin{center}
\includegraphics[width = 12cm]{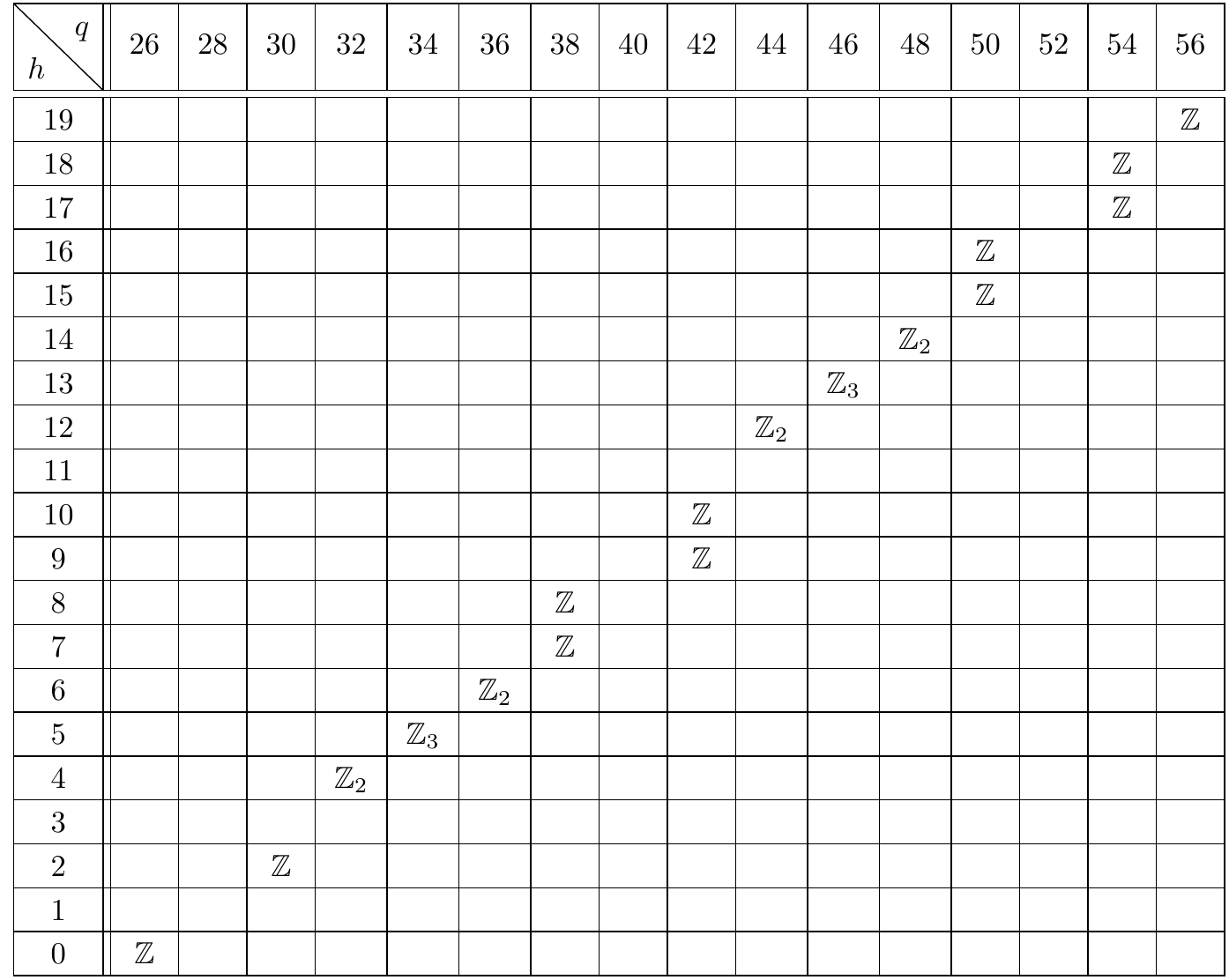}
\end{center}
\caption{\label{fig:torus314} The reduced odd Khovanov homology of $T(3,14)$.}
\end{figure} 

As a sample calculation, let us show how the $3$-torsion in $q$-degree $34$ arises from $C^\ast$.

\begin{example}
Since we have $28$ positive crossings in $T(3,14)$, we consider $C^\ast\{28\}$. We focus on homological degrees $4$ to $6$ of this complex to get
\[
\begin{tikzpicture}
\smgamma{0}{2}{0.8}{0.8}{very thick}
\closurex{0}{2}{0.8}{0.8}{very thick}
\node at (0.4,1.4) {$\oplus$};
\smdelta{0}{0}{0.8}{0.8}{very thick}
\closurex{0}{0}{0.8}{0.8}{very thick}
\node[scale = 0.8] at (1.4, 1.7) {$\{33\}$};
\node[scale = 0.8] at (1.4, -0.3) {$\{33\}$};
\draw[->] (1.5, 2.4) -- (3.5, 2.4);
\node[scale = 0.7] at (2,2.7) {$2\,($};
\dottedcup{2.2}{2.5}{0.4}{0.4}{thick}
\node[scale = 0.7] at (2.6,2.7) {$-$};
\dottedline{2.8}{2.5}{0.4}{thick}
\node[scale = 0.7] at (3,2.7) {$)$};
\draw[->] (1.5, 2.2) -- node [above, sloped, near end, scale = 0.8] {$A.3$} (3.5, 0.6);
\draw[-, line width=6pt, color=white] (1.5, 0.6) -- (3.5, 2.2);
\draw[->] (1.5, 0.6) -- node [above, sloped, near end, scale = 0.8] {$A.3$} (3.5, 2.2);
\draw[->] (1.5, 0.4) -- (3.5, 0.4);
\node[scale = 0.7] at (2,0.7) {$2\,($};
\dottedcup{2.2}{0.5}{0.4}{0.4}{thick}
\node[scale = 0.7] at (2.6,0.7) {$-$};
\dottedline{2.8}{0.5}{0.4}{thick}
\node[scale = 0.7] at (3,0.7) {$)$};
\smgamma{4}{2}{0.8}{0.8}{very thick}
\closurex{4}{2}{0.8}{0.8}{very thick}
\node at (4.4,1.4) {$\oplus$};
\smdelta{4}{0}{0.8}{0.8}{very thick}
\closurex{4}{0}{0.8}{0.8}{very thick}
\node[scale = 0.8] at (5.4, 1.7) {$\{35\}$};
\node[scale = 0.8] at (5.4, -0.3) {$\{35\}$};
\draw[->] (5.5, 2.4) -- node [above, scale = 0.8] {$M$} (7.5,2.4);
\draw[->] (5.5, 2.2) -- node [above, sloped, near end, scale = 0.8] {$-M$} (7.5, 0.6);
\draw[-, line width=6pt, color=white] (5.5, 0.6) -- (7.5, 2.2);
\draw[->] (5.5, 0.6) -- node [above, sloped, near end, scale = 0.8] {$M$} (7.5, 2.2);
\draw[->] (5.5, 0.4) -- node [above, scale = 0.8] {$M$} (7.5, 0.4);
\smbeta{8}{2}{0.8}{0.8}{very thick}
\closurex{8}{2}{0.8}{0.8}{very thick}
\node at (8.4,1.4) {$\oplus$};
\smalpha{8}{0}{0.8}{0.8}{very thick}
\closurex{8}{0}{0.8}{0.8}{very thick}
\node[scale = 0.8] at (9.4, 1.7) {$\{36\}$};
\node[scale = 0.8] at (9.4, -0.3) {$\{36\}$};
\end{tikzpicture}
\]
after applying $F_X$. Here the first set of morphisms is governed by $E_1$ and the second by $B_2$. Also, $M$ stands for a merge cobordism, and $A.3$ for two surgeries whose surgery diagram is of type A.3. Since the order matters for those, we note that the higher placed surgery arc is resolved first.


Applying $F$ gives
\[
\begin{tikzpicture}
\node at (0,0) {$\Lambda^\ast(a,b)\{33\}$};
\node at (0,0.65) {$\oplus$};
\node at (0, 1.3) {$\Lambda^\ast(a,b)\{33\}$};
\draw[->] (1.1, 1.2) -- node [above, sloped, scale = 0.7, near end] {$a-b$} (2.9,0.1);
\draw[-, line width=6pt, color = white] (1.1, 0.1) -- (2.9, 1.2);
\draw[->] (1.1, 1.3) -- node [above, scale = 0.7] {$2(a-b)$} (2.9, 1.3);
\draw[->] (1.1, 0.1) -- node [above, sloped, scale = 0.7, near end] {$b-a$} (2.9, 1.2);
\draw[->] (1.1, 0) -- node [above, scale = 0.7] {$2(b-a)$} (2.9, 0);
\node at (4,0) {$\Lambda^\ast(a,b)\{35\}$};
\node at (4,0.65) {$\oplus$};
\node at (4, 1.3) {$\Lambda^\ast(a,b)\{35\}$};
\node at (8,0) {$\Lambda^\ast(a)\{36\}$};
\node at (8,0.65) {$\oplus$};
\node at (8, 1.3) {$\Lambda^\ast(a)\{36\}$};
\draw[->] (5.1, 1.2) -- node [above, sloped, scale = 0.7, near end] {$-i$} (7,0.1);
\draw[-, line width=6pt, color = white] (5.1, 0.1) -- (7, 1.2);
\draw[->] (5.1, 1.3) -- node [above, scale = 0.7] {$i$} (7, 1.3);
\draw[->] (5.1, 0.1) -- node [above, sloped, scale = 0.7, near end] {$i$} (7, 1.2);
\draw[->] (5.1, 0) -- node [above, scale = 0.7] {$i$} (7, 0);
\end{tikzpicture}
\]
Here we use the convention that $a$ refers to the circle that involves the left-most arc of the braid closure. As a homomorphism, $a$ stands for left multiplication by $a$ in $\Lambda^\ast(a,b)$, and similarly $b$ stands for left multiplication by $b$. The homomorphism $i\colon \Lambda^\ast(a,b) \to \Lambda^\ast(a)$ is the quotient map which sends both $a$ and $b$ to $a$.

We note that the two $A.3$ morphisms first merge the two circles, and then split the circle using a right-pointing arrow in the lower half of the smoothing. After rotating this arrow counter-clockwise, we see that in one situation it points to the $a$-circle, and in the other situation to the $b$-circle.

At the moment we are on track to calculate the unreduced odd Khovanov homology, but we can pass to a reduced calculation by only considering the subcomplex where the generators contain $a$, and shifting the $q$-degree by $1$. This corresponds to putting the basepoint on the left-most arc in the braid closure. Since we want to calculate $\rKh{\ast,34}(T(3,14))$, the complex reduces to
\begin{equation} \label{eq:almostdone}
\begin{tikzpicture}[baseline={([yshift=-.5ex]current bounding box.center)}]
\node at (0,0) {$\Z\langle a\rangle$};
\node at (0,0.6) {$\oplus$};
\node at (0,1.2) {$\Z\langle a\rangle$};
\node at (4,0) {$\Z\langle a\wedge b\rangle$};
\node at (4,0.6) {$\oplus$};
\node at (4,1.2) {$\Z\langle a\wedge b\rangle$};
\draw[->] (0.5, 1.1) -- node [above, sloped, scale = 0.7, near end] {$-b$} (3.2,0.1);
\draw[-, line width=6pt, color = white] (0.5, 0.1) -- (3.2, 1.1);
\draw[->] (0.5, 1.2) -- node [above, scale = 0.7] {$-2b$} (3.2, 1.2);
\draw[->] (0.5, 0.1) -- node [above, sloped, scale = 0.7, near end] {$b$} (3.2, 1.1);
\draw[->] (0.5, 0) -- node [above, scale = 0.7] {$2b$} (3.2, 0);
\end{tikzpicture}
\end{equation}
where $\Z\langle w\rangle$ stands for the infinite cyclic abelian subgroup of $\Lambda^\ast(a,b)$ generated by the word $w$. Notice that the complex reduces to $0$ in homological degree $6$ and higher, and in fact also in homological degree $3$ and lower. As multiplication by $a$ is $0$ from $\Z\langle a\rangle \to \Z\langle a\wedge b\rangle$, we removed it from the diagram. Also, multiplication by $b$ is an isomorphism, and so we can perform a Gaussian elimination on one of the diagonal homomorphisms. 

If we choose the one pointing downward, the resulting complex is
\[
\begin{tikzpicture}
\node at (0,0) {$\Z\langle a\rangle$};
\node at (3,0) {$\Z\langle a\wedge b\rangle$};
\draw[->] (0.5,0) -- node [above, scale = 0.7] {$-b + 4b$} (2.2,0);
\end{tikzpicture}
\]
which results exactly in the $3$-torsion observed in Figure \ref{fig:torus314}.

Observe also that using $E_2$ instead of $E_1$ would have resulted in the the two horizontal lines in (\ref{eq:almostdone}) to be $0$. In that case we can perform two Gaussian eliminations, and this explains why $\rKh{\ast, 40}(T(3,14)) = 0$. 
\end{example}

\section{Concordance invariants}

As was already observed in \cite{MR3071132} the definition of Rasmussen's $s$-invariant \cite{MR2729272} does not carry over directly to the odd theory.  Indeed, Shumakovitch \cite{MR2777025} found several knots which are \em zero-omitting\em, that is, they have no rational homology in homological grading $0$. Getting an odd version of the $s$-invariant would therefore require a rather different approach.

However, Sarkar, Scaduto, and Stoffregen \cite[\S 5.6]{MR4078823} construct refinements of the Rasmussen-invariant $s_\F$, where $\F=\Z/2\Z$ is the field with two elements, based on on their construction of a stable homotopy type for odd Khovanov homology, and the Steenrod square cohomology operations arising this way. In particular, there is a refinement corresponding to the first Steenrod square $\Sq^1$, which can be computed using the Bockstein homomorphism obtained through the change of coefficients
\[
0\longrightarrow \Z/2\Z \longrightarrow \Z/4\Z \longrightarrow \Z/2\Z \longrightarrow 0.
\] 
Since the even and odd Khovanov homologies agree with $\Z/2\Z$ coefficients, let us simply write $\Kh^{\ast,\ast}(K;\F)$ in this case. Also, since rationally even and odd Khovanov homologies are rather different, we expect the Bockstein homomorphisms on $\Kh^{\ast,\ast}(K;\F)$, depending on whether we use the even or odd theory, to be different in general as well. 
Indeed, for alternating knots the integral odd homology is torsion free, while even (unreduced) homology contains $\Z/2\Z$-summands, so that the Bockstein homomorphisms are different. But we are more interested in examples where the odd theory leads to a non-trivial Bockstein homomorphism that can be picked up by the Sarkar--Scaduto--Stoffregen refinements. In fact, the zero-omitting examples of Shumakovitch appear to be good candidates for this.

Let us recall the definition of the refinements in \cite{MR4078823}, which go back to the work of Lipshitz and Sarkar \cite{MR3189434}. We will focus on the case of Bockstein homomorphisms $\beta\colon \Kh^{i,j}(K;\F)\to \Kh^{i+1,j}(K;\F)$ only, that correspond to the odd and even first Steenrod squares.

The {\em Bar-Natan} complex $C^\ast_{\mathrm{BN}}(\mathcal{D};\F)$ is a deformation of the Khovanov complex $CO^\ast(\mathcal{D};\F)$ (since $\F$ is the field of two elements, we might as well use the odd complex), whose coboundary is given by $\delta_{\mathrm{BN}} = \delta + \gamma$, with $\gamma$ defined as follows. For an edge $e_c$ in the hypercube between vertices $c^0$ and $c^1$ we get homomorphisms
\[
E_{e_c} \colon \Lambda^\ast(S_{c^0};\F) \to \Lambda^\ast(S_{c^1};\F)
\]
which in the case of a merger is induced by $s_0\wedge s_1 \mapsto s$, if the components $s_0$ and $s_1$ merge to $s$, and is $0$ on all generators not having both $s_0$ and $s_1$ in their word. In the case of a split from $s$ to $s_0$ and $s_1$, the map is induced by $s\mapsto s_0\wedge s_1$, and being $0$ on generators not having $s$ in their word.

The coboundary no longer preserves the $q$-grading, but the $q$-grading determines a descending filtration $\mathcal{F}$ on $C^\ast_{\mathrm{BN}}(\mathcal{D};\F)$ with $\mathcal{F}_j/ \mathcal{F}_{j+2}$ the Khovanov complex of $\mathcal{D}$ with coefficients in $\F$.

As was shown in \cite{MR3189434}, the cohomology of the Bar-Natan complex for a knot diagram is concentrated in degree $0$, with $H^0_{\mathrm{BN}}(K) = \F^2$, and the \em Rasmussen invariant \em with coefficients in $\F$ is given by
\begin{align*}
s_\F(K) &= \max\{j\in 2\Z+1\,|\, i^\ast \colon H^0(\mathcal{F}_j) \to \BN^0(K;\F) \mbox{ surjective}\}+1\\
&= \max\{j\in 2\Z+1\,|\, i^\ast \colon H^0(\mathcal{F}_j) \to \BN^0 (K;\F) \mbox{ non-zero}\}-1.
\end{align*}
Denote $p\colon H^0(\mathcal{F}_j;\F) \to \Kh^{0,j}(K;\F) \cong H^0(\mathcal{F}_j/\mathcal{F}_{j+2};\F)$, and consider the following configurations
\begin{equation}\label{eq:fullconfigs}
\begin{tikzpicture}[baseline={([yshift=-.5ex]current bounding box.center)}]
\node at (0,0) {$\langle \tilde{a},\tilde{b}\rangle$};
\node at (3,0) {$\langle \hat{a}, \hat{b} \rangle$};
\node at (6,0) {$\langle a,b\rangle$};
\node at (9,0) {$\langle \bar{a},\bar{b}\rangle$};
\node at (0,1) {$\Kh^{-1,j}(K;\F)$};
\node at (3,1) {$\Kh^{0,j}(K;\F)$};
\node at (6,1) {$H^0(\mathcal{F}_j;\F)$};
\node at (9,1) {$\BN^0(K;\F)$};
\node at (0,2) {$\langle \tilde{a}\rangle$};
\node at (3,2) {$\langle \hat{a}\rangle$};
\node at (6,2) {$\langle a\rangle$};
\node at (9,2) {$\langle \bar{a}\rangle\not=0$};
\draw[right hook->] (0,0.25) -- (0,0.75);
\draw[right hook->] (3,0.25) -- (3,0.75);
\draw[right hook->] (6,0.25) -- (6,0.75);
\draw[-] (9.05,0.25) -- (9.05,0.75);
\draw[-] (8.95,0.25) -- (8.95,0.75);
\draw[right hook->] (0,1.75) -- (0,1.25);
\draw[right hook->] (3,1.75) -- (3,1.25);
\draw[right hook->] (6,1.75) -- (6,1.25);
\draw[right hook->] (9,1.75) -- (9,1.25);
\draw[->] (0.5,0) -- (2.5,0);
\draw[<-] (3.5,0) -- (5.5,0);
\draw[->] (6.5,0) -- (8.5,0);
\draw[->] (0.4,2) -- (2.6,2);
\draw[<-] (3.4,2) -- (5.6,2);
\draw[->] (6.4,2) -- (8.3,2);
\draw[->] (1.1,1) -- node[above] {$\beta$} (2.1,1);
\draw[<-] (3.95,1) -- node[above] {$p$} (5.15,1);
\draw[->] (6.85,1) -- node[above] {$i^\ast$} (8.1,1);
\end{tikzpicture}
\end{equation}

\begin{definition}
Call an odd integer $j$ \em $\beta$-half-full\em, if there exist $a\in H^0(\mathcal{F}_j;\F)$ and $\tilde{a}\in Kh^{-1,j}(K;\F)$ such that $p(a)=\beta(\tilde{a})$, and such that $i^\ast(a)=\bar{a}\not=0$. That is, there exists a configuration as in the upper two rows of (\ref{eq:fullconfigs}).

Call an odd integer $j$ \em $\beta$-full\em, if there exist $a,b\in H^0(\mathcal{F}_j;\F)$ and $\tilde{a},\tilde{b}\in Kh^{-1,q}(K;\F)$ such that $p(a)=\beta(\tilde{a})$, $p(b)=\beta(\tilde{b})$, and $i^\ast(a),i^\ast(b)$ generate $\BN^0(K;\F)$. That is, there exists a configuration as in the lower two rows of (\ref{eq:fullconfigs}).
\end{definition}

We note that while $i^\ast(a)$ and $i^\ast(b)$ have to be non-zero, it is allowed that $p(a)$ or $p(b)$ are zero.

\begin{definition}
Let $K$ be a knot and $\beta\colon \Kh^{i,j}(K;\F)\to \Kh^{i+1,j}(K;\F)$ the first Steenrod Square $\Sqo{e}$ or $\Sqo{o}$ coming from even or odd Khovanov homology, then $r^\beta_+,r^\beta_-, s^\beta_+, s^\beta_-\in \Z$ are defined as follows.
\begin{align*}
r^\beta_+(K) &= \max\{j\in 2\Z+1\,|\, j \mbox{ is $\beta$-half-full}\}+1\\
s^\beta_+(K) &= \max\{j\in 2\Z+1\,|\, j \mbox{ is $\beta$-full}\}+3.
\end{align*}
If $\overline{K}$ denotes the mirror of $K$, we also set
\begin{align*}
r^\beta_-(K) &= -r^\beta_+(\overline{K})\\
s^\beta_-(K) &= -s^\beta_+(\overline{K}).
\end{align*}
We also write
\[
 s^\beta(K) = (r^\beta_+(K),s^\beta_+(K),r^\beta_-(K),s^\beta_-(K)).
\]
\end{definition}
 It is shown in \cite[Lm.4.2]{MR3189434} that $r^\beta_+(K), s^\beta_+(K)\in \{s_\F(K), s_\F(K)+2\}$. Furthermore, $|r^\beta_\pm(K)|/2$ and $|s^\beta_\pm(K)|/2$ are concordance invariants and lower bounds for the smooth slice genus $g_4(K)$, see \cite[Thm.1]{MR3189434} in the even case, and \cite[Thm.5.14]{MR4078823} in the odd case.
 
 \begin{example}
The knot $K = 10_{132}$ is the only knot with at most 10 crossings which is zero-omitting, compare \cite[\S 4.3]{MR2777025}. It is known that $s=s_\F(K) = -2$ and the integral odd Khovanov homology satisfies $\Kho{0,s\pm 1}(K) = \Z/2\Z$ and $\Kho{1,s\pm 1}(K) = 0$. This implies that $\Sqo{o}\colon\Kh^{-1,s\pm 1}(K;\F) \to \Kh^{0,s\pm 1}(K;\F)$ is surjective, making $-1$ $\Sqo{o}$-half-full and $-3$ $\Sqo{o}$-full. Since $\Sqo{o}\colon\Kh^{-1,s\pm 1}(\overline{K};\F) \to \Kh^{0,s\pm 1}(\overline{K};\F)$ is $0$ we get
\[
s^{\Sqo{o}}(10_{132}) = (0, 0, -2, -2).
\]
Notice that this does not give a better bound on the slice genus than $s_\F$.

The knot $K = 11n38$ is the only zero-omitting knot with exactly 11 crossings. We have $s_\F(K) = 0$ and $\Kho{0,-1}(K) = \Z/2\Z\oplus \Z/3\Z$, $\Kho{1,-1}(K) = 0$. As above this implies $-1$ is $\Sqo{o}$-full, and $s_+^{\Sqo{o}}(11n38) = 2$. However, since $\Kho{0,1}(K) = \Z/2\Z\oplus \Z/3\Z$, $\Kho{1,1}(K) = \Z/2\Z$ it is not immediately clear whether $1$ is $\Sqo{o}$-half-full. But this time we do get a better bound on the slice genus.

That $11n38$ is not slice can also be derived from the signature, which is non-zero. In fact, $g_4(11n38) = 1$, see \cite{knotinfo}.

The knot $K = 9_{42}$ is not zero-omitting, but the rational generator appears in the wrong $q$-grading. We have $s_\F(K) = 0$, and $\Kho{1,1}(K) = \Z\oplus \Z/2\Z$, $\Kho{0,1}(K) = 0$. So here we look at the mirror and conclude $s^{\Sqo{o}}_-(K) = -2$.
 \end{example}
 
 In \cite[\S 6]{schuetz2018fast}, we developed an algorithm to calculate $s^{\Sqo{e}}(K)$ by applying Bar-Natan's scanning algorithm directly on the Bar-Natan complex. This algorithm is also implemented in \verb+KnotJob+. The algorithm is readily adapted to the odd situation. One simply runs the scanning algorithm on the odd complex with coefficients in $\Z/4\Z$, and performs the same steps on the Bar-Natan complex with coefficients in $\F=\Z/2\Z$. 
 After the scanning on the odd complex is finished and resulted in a $\Z/4\Z$-bi-complex $C^{\ast,\ast}$, we have a cochain complex $D^\ast$ with filtration that carries the same information as the Bar-Natan complex, and so that $\mathcal{F}_jD^\ast / \mathcal{F}_{j+2} D^\ast= C^{\ast, j}\otimes_{\Z/4\Z}\Z/2\Z$. As in \cite[\S 6]{schuetz2018fast}, we can then calculate $s^{\Sqo{o}}(K)$.
 
 This has also been implemented in \verb+KnotJob+, and we list a few of our calculations here. 
 \begin{table}[ht]
 \begin{tabular}{|l|c|c||l|c|c|}
 \hline
 Knot & $s^{\Sqo{o}}$ & $s_\F$ &   Knot & $s^{\Sqo{o}}$ & $s_\F$  \\
 \hline
 \hline
 $9_{42}$ & $(0,0,-2,-2)$ & $0$ & $10_{132}$ & $(0,0,-2,-2)$ & $-2$ \\
 \hline
 $10_{136}$ & $(0,0,-2,-2)$ & $0$ & $11n12$ & $(2,2,0,0)$ & $2$ \\
 \hline
 $11n19$ & $(-2,-2,-4,-4)$ & $-2$ & $11n20$ & $(0,0,-2,-2)$ & $0$ \\
 \hline
 $11n24$ & $(2,2,0,0)$ & $0$ & $11n38$ & $(2,2,0,0)$ & $0$ \\
 \hline
 $11n70$ & $(4,4,2,2)$ & $2$ & $11n79$ & $(2,2,0,0)$ & $0$ \\
 \hline
 $11n92$ & $(0,0,-2,-2)$ & $0$ & $11n96$ & $(2,2,0,0)$ & $0$ \\
 \hline
 $11n138$ & $(2,2,0,0)$ & $0$ & & &  \\
 \hline
 \end{tabular}
\caption{Prime knots with non-constant $s^{\Sqo{o}}$ and at most $11$ crossings.}
\end{table}
There are also $49$ knots among the $888$ non-alternating $12$-crossing prime knots, $286$ among the $5,110$ non-alternating $13$-crossing knots, and $1,718$ among the $27,436$ non-alternating $14$-crossing prime knots, for which $s^{\Sqo{o}}$ is non-constant.

We note that for all such knots up to $12$ crossings the signature is always different from the $s$-invariant, as observed via \cite{knotinfo}. While this is still true for many of the $13$-crossing knots with non-constant $s^{\Sqo{o}}$, there are exceptions. For example, 
\[
s^{\Sqo{o}}(13n158) = (0,0,-2,-2), 
\]
while both signature and $s$-invariant are $0$ for this knot.

In all of our calculations we observed that $s^{\Sqo{o}}_+ = r^{\Sqo{o}}_+$. This may not be so surprising since the splitting of unreduced odd Khovanov homology also leads to a splitting of $\Sqo{o}$. In \cite{schuetz2018fast} we have made the same observation for $s^{\Sqo{e}}$, even though $\Sqo{e}$ does not split on unreduced even Khovanov homology. Of course, $s^{\Sqo{e}}$ is much less often non-constant than $s^{\Sqo{o}}$.

\bibliography{KnotHomology}
\bibliographystyle{amsalpha}

\end{document}